\documentclass[10pt,a4paper]{amsart}
\usepackage[english]{babel}
\usepackage{amssymb,xypic,amscd,amsthm,stmaryrd, mathtools, amsmath}

\CompileMatrices
\usepackage[T1]{fontenc}
%%%%%%%%%%%%%%%
%%% DEFINICIONES %%%%%
%%%%%%%%%%%%%%%%%%%%%%
\newtheorem{defn}{Definition}
\newtheorem{thm}[defn]{Theorem}
\newtheorem{cor}[defn]{Corollary}
\newtheorem{lem}[defn]{Lemma}
\newtheorem{prop}[defn]{Proposition}

\newtheorem{cex}[defn]{Counterexample}
\theoremstyle{plain}
\newtheorem{rem}[defn]{Remark}
\theoremstyle{remark}

\numberwithin{equation}{section} \numberwithin{defn}{section}
%!TeX spellcheck = en_GB

%\newtheorem{remark}[theorem]{Remark}
%%%%%% 1.- OPERADORES
\newcommand{\Mat}{\operatorname{Mat}}
\newcommand\ed{\operatorname{End}}
\newcommand{\Img}{\operatorname{Im}}

\newcommand\Ker{\operatorname{Ker}}
\newcommand\aut{\operatorname{Aut}}

\newcommand{\f}{\varphi}
\newcommand{\g}{\psi}
\newcommand{\R}{\mathbb{R}}
\newcommand{\N}{\mathbb{N}}

\newcommand{\Z}{\mathbb{Z}}
\newcommand{\C}{\mathbb{C}}
\newcommand{\h}{\mathcal{H}}
\newcommand\Id{\operatorname{Id}}
\begin{document}

\title[Core Partial Order]{Core Partial Order for Finite Potent endomorphisms}
\author{ Diego Alba Alonso* }

\address{ Departamento de Matem\'aticas, Universidad de Salamanca, Plaza de la Merced 1-4, 37008 Salamanca, Espa\~na}
 \email{ (*) daa29@usal.es, ORCID:0000-0001-7147-8368}

\thanks{Partially supported by the Spanish Government research project PID2023-150787NB-100.}

\begin{abstract}
The aim of this paper is to generalize the Core Inverse to arbitrary vector spaces using finite potent endomorphisms. As an application, the core partial order is studied in the set of finite potent endomorphisms (of index lesser or equal than one), thus generalizing the theory of this order to infinite dimensional vector spaces. Moreover, a pre-order is presented using the CN-decomposition of a finite potent endomorphism. Finally, some questions concerning this pre-order are posed. Throughout the paper, some remarks are also made in the framework of arbitrary Hilbert spaces using bounded finite potent endomorphisms.
\end{abstract}

\maketitle

\bigskip

%%%%%%%%%%%%%%%%%%%%%%%%%%%%%%%%%%%%%%%%%%%%%%%%%%%%%%%%%%%%%
\setcounter{tocdepth}1

\tableofcontents
%%%%%%%%%%%%%%%%%%%%%%%%%%%%%%%%%%%%%%%%%%%%%%%%%%%%%%%%%%%%%
\bigskip

\medskip

\textbf{Mathematical Subject Classification}: 06A06, 15A03, 15A04, 15A09, 46C99.\\ \bigskip 

\textbf{Keywords}: Finite Potent Endomorphisms, Hilbert Spaces , Bounded Finite Potent Endomorphisms, Core Inverse, Core Partial Order.
\section{Introduction.}
In this paper, the set of all $m\times n$ matrices over a field $k$ is represented by $\Mat_{m\times n}(k).$ Let $A\in \Mat_{m\times n}(k),$ the symbols $A^{-1}, A^*, \mathcal{R}(A), \mathcal{N}(A),\mathrm{rk(A)}$ and $P_{\mathcal{R}(A)}$ will stand for the usual inverse (when it exists), the conjugate transpose, column space, null space, the rank of matrix $A$ and the orthogonal projection onto the column space of $A$ respectively. Moreover, $\mathrm{Id}\in \Mat_{n\times n}(k)$ will denote the identity matrix.\\

For an arbitrary $(n\times n)-$matrix $A$ with entries in a field $k$, the index of $A,$ $i(A)\geq 0,$ is the smallest non-negative integer such that $\mathrm{rk(A^{i(A)})}=\mathrm{rk(A^{i(A)+1})}.$ The Drazin inverse of $A$ is the unique solution that satisfies: $$A^{i(A)+1}\cdot X=A^{i(A)}\quad \& \quad X\cdot A \cdot X=X\quad \& \quad A\cdot X=X\cdot A, $$ and it is represented by $A^D.$ If $i(A)\leq 1,$ then $A^D$ is called the group inverse of $A$ and it is denoted by $A^{\#}.$ \\ 
For $A\in \Mat_{m\times n}(\C),$ if $X\in \Mat_{n\times m}(\C)$ satisfies $$A\cdot X \cdot A=A \quad\& \quad X\cdot A \cdot X= X\quad \& \quad (A\cdot X)^*=X\cdot A\quad \&\quad (X\cdot A)^*=X\cdot A, $$ then $X$ is called Moore-Penrose inverse of $A.$ This matrix is unique and it is represented by $A^{\dagger}.$ For details see \cite{Ben}.

In 2010, Baksalary and Trenkler introduced in \cite{BakTren} the notion of the Core Inverse of a matrix $A\in \Mat_{n\times n}(\C)$ with $i(A)\leq 1$ as the only matrix $X\in \Mat_{n\times n}(\C)$ satisfying the following conditions: $$A\cdot X= P_{\mathcal{R}(A)} \text{ and } \mathcal{R}(X)\subseteq \mathcal{R}(A). $$ It is denoted as $A^{\textcircled \#}.$ Initially, it was presented as an alternative to the group inverse.
  Firstly, they noted that this generalized inverse could be used to solve certain linear systems and gained interest as it coincides with Bott-Duffin inverse $P_{\mathcal{R}(A)}\cdot[(A-\Id)\cdot P_{\mathcal{R}(A)}+\Id]^{-1}.$ Bott-Duffin inverse occurs in the solutions of some constrained systems of equations arising in Electrical Network Theory, see \cite[Chapter 2, Section 10]{Ben}, \cite{Camp}. 
Ever since its appearance, the core inverse and its applications have interested a lot of researchers in the area. See for instance: \cite{Di1}, \cite{Di2}, \cite{Huan}, \cite{Ke}, \cite{Kurata}, \cite{Malik}.\\ 
  John Tate, in \cite{Ta}, introduced the notion of finite potent endomorphism. Let $k$ be an arbitrary field and let $V$ be an arbitrary $k-$vector space. Let us now consider an endomorphism $\f$ of $V.$ We say that $\f$ is finite potent if $\f^n(V)$ is a finite dimensional vector subspace for some $n.$ Tate used this operators in order to give an intrinsic definition of Abstract Residue.\\
In \cite{BakTren}, the authors studied some properties of the core inverse using singular value decomposition and they used this inverse to define a partial order in the class of square complex matrices of index one. In this paper, the core inverse and the core order are extended to arbitrary vector spaces, in general, infinite dimensional ones, endowed with an inner product over a field $k=\R$ or $k=\C ,$ using finite potent endomorphisms.\smallskip \\ 

The article is organized as follows. \\Firstly, Section \ref{s:pre} is a gathering of results that will be used later. Section \ref{S: MP bounded operator} contains some results related to the Moore-Penrose inverse of a bounded finite potent endomorphism. Briefly, given a finite potent endomorphism $\f \in \ed_k(V),$ the study of the Moore-Penrose inverse within this framework was done by asking for an admissibility condition. To wit, the Moore-Penrose of a finite potent endomorphism exists when the vector space admits the following decompositions: $$\f \colon V=\Ker(\f)\oplus [\Ker(\f)]^{\perp} \to V=\Img(\f)\oplus [\Img(\f)]^{\perp}.$$ However, being a bounded finite potent operator, could somehow imply any equivalent condition to this one. We devote the first part of this section to answer this question. Moreover, we include a detailed study of the Moore-Penrose inverse of a bounded finite potent operator of index lesser or equal to one. Section \ref{s:core-inv-fp} includes the generalization of the theory related to the Core Inverse to arbitrary vector spaces, following \cite{BakTren} as a guideline. However, the study here presented about the Core Inverse is done with a slightly different approach to the case of matrices. In particular, we do not start by requiring our finite potent endomorphism to be of index lesser or equal than one. We start by deducing that when the Core Inverse of our finite potent endomorphism exists, then the endomorphism shall be of index lesser or equal than one (Proposition \ref{P: ConsecuenciasDefinicion}). From this previous fact and the definition of the Core Inverse we deduce that the Core Inverse is also a unique finite potent endomorphism of index lesser or equal than one (Corollary \ref{C: fcircledfpdeind1}). Further, the well known algebraic characterization of the Core Inverse, involving the group inverse and the Moore-Penrose inverse is presented, as well as a geometric characterization. Using the obtained geometric characterization, we further calculate the Moore-Penrose inverse and the Core Inverse of the Core Inverse.\\ As an application of the previous theory, the core partial order is also generalized to arbitrary vector spaces, namely; infinite dimensional ones, using finite potent endomorphisms and this is included in Section \ref{s: Core Partial Order}. In the same paper as the Core Inverse was introduced, \cite{BakTren}, the core partial order for complex matrices was extensively investigated. After relating the core order for finite potent endomorphisms with the space pre-order, a characterization is derived in Theorem \ref{T: CaractCore}. This is the key element to prove that the core order is indeed a partial order in the set of finite potent endomorphisms of index lesser or equal than one. Finally, in Section \ref{S: PreOrder} we introduce the so-called ``general core order'', which is a pre-order in the set of finite potent endomorphisms of arbitrary index. Moreover, some conjectures are posed concerning this pre-order, possibly relating it with the Core-EP pre-order and, in general, relating the theory exposed here with the Core-EP Inverse.

 It is worth noticing two last things. Every proof and result presented can be specialized to finite square matrices over arbitrary ground fields. Finite potent endomorphisms do not form an ideal of the endomorphisms. Namely, the sum and the composition of two finite potent endomorphisms is not, in general, a finite potent endomorphism. Therefore, the generalization here presented is not merely a generalization from finite dimensional vector spaces (finite square matrices) to infinite dimensional vector spaces, but it also carries the additional problems derived from the impossibility to use the usual ring structure endomorphisms have to generalize the theory.

\medskip

\section{Preliminaries} \label{s:pre}
This section is included for the sake of completeness.

\subsection{Finite Potent Endomorphisms} \label{ss:finite-potent}

Let $k$ be an arbitrary field and let $V$ be a $k$-vector space. Let us now consider an endomorphism $\varphi$ of $V$. We say
that $\varphi$ is ``finite potent'' if $\varphi^n V$ is finite
dimensional for some $n$. This definition was introduced by J. Tate in \cite{Ta} as a basic tool for his elegant definition
of Abstract Residues. 

 In 2007, M. Argerami, F. Szechtman and R. Tifenbach showed in \cite{AST} that an endomorphism $\varphi$ is
finite potent if and only if $V$ admits a $\varphi$-invariant
decomposition $V = U_\varphi \oplus W_\varphi$ such that
$\varphi_{\vert_{U_\varphi}}$ is nilpotent, $W_\varphi$ is finite
dimensional and $\varphi_{\vert_{W_\varphi}} \colon W_\varphi
\overset \sim \longrightarrow W_\varphi$ is an isomorphism.
Hence, this decomposition is unique. We shall call this decomposition the $\varphi$-invariant AST-decomposition of $V$.

Moreover,  we shall call ``index of $\varphi$'', $i(\varphi)$, to the nilpotency order of $\varphi_{\vert_{U_\varphi}}$.  One has that $i(\varphi) = 0$ if and only if $V$ is a finite-dimensional vector space and $\varphi$ is an automorphism.

We shall remark that the sum and the composition of finite potent endomorphism is not necessarily a finite potent endomorphism as can be seen by the following example. Let us consider the $k-$vector space  $V=\underset{i\in \N}{\oplus}<v_i>.$ Moreover, let us define the following endomorphisms:
$$\f(v_{i}) = \left \{ \begin{array}{ccl}  v_{i+1} & \text{ if } &  \text{ i is odd } \\  0 & \text{ if } & \text{ i is even } \end{array} \right . \text{ and }\quad \g(v_{i}) = \left \{ \begin{array}{ccl} 0 & \text{ if } &  \text{ i is odd }  \\   v_{i-1} & \text{ if } & \text{ i is even } \end{array} \right .$$
Notice that both of them are finite potent endomorphisms, as they are nilpotent. Then:
$$(\f+\g)(v_{i}) = \left \{ \begin{array}{ccl}  v_{i+1} & \text{if} & \text{ i is odd } \\  v_{i-1} & \text{ if } & \text{ i is even } \end{array} \right . \text {and }\quad (\f\circ \g)(v_{i}) = \left \{ \begin{array}{ccl}  0 & \text{if} & \text{ i is odd }  \\  v_i & \text{if} & \text{ i is even }  \end{array} \right .,$$ from where we deduce that the sum and the composition of finite potent endomorphisms is not a finite potent endomorphism.

Basic examples of finite potent endomorphisms are all endomorphisms of a finite-dimensional vector space and finite rank or nilpotent endomorphisms of infinite-dimensional vector spaces.

For more details on the theory of finite potent endomorphisms, the reader is referred to \cite{Pa}, \cite{Pa-CN} and \cite{Fpa-CN}.

\medskip

\subsubsection{CN Decomposition of a Finite Potent Endomorphism}\label{ss: CNdecomp01010101}

Let $V$ be again an arbitrary $k$-vector space. Given a finite potent endomorphism $\varphi \in \ed_k (V)$,  there exists a unique decomposition $\varphi = \varphi_{_1} + \varphi_{_2}$, where $\varphi_{_1}, \varphi_{_2} \in \ed_k (V)$ are finite potent endomorphisms satisfying that:

\begin{itemize}

\item $i(\varphi_{_1}) \leq 1$;

\item $\varphi_{_2}$ is nilpotent;

\item $\varphi_{_1} \circ \varphi_{_2} = \varphi_{_2} \circ \varphi_{_1} = 0$.

\end{itemize}

Also, the following hold:  \begin{equation} \label{eq:index1} \varphi = \varphi_1 \Longleftrightarrow U_\varphi = \Ker \varphi \Longleftrightarrow  W_\varphi = \text{ Im } \varphi  \Longleftrightarrow i(\varphi) \leq 1\, .\end{equation}

Moreover, if $V = W_{_\varphi}\oplus U_{_\varphi}$ is the AST-decomposition of $V$ induced by $\varphi$, then $\varphi_{_1}$ and $\varphi_{_2}$ are the unique linear maps such that:

\begin{equation} \label{eq:expl-CN-exp-3498353} \varphi_{_1} (v) = \left \{ \begin{aligned} \varphi (v) \, &\text{ if } \, v\in W_{_\varphi} \\ \, 0 \quad &\text{ if } \, v\in U_{_\varphi} \end{aligned} \right . \quad \text{ and } \quad \varphi_{_2} (v) = \left \{ \begin{aligned} \, 0 \quad &\text{ if } \, v\in W_{_\varphi} \\ \varphi (v) \, &\text{ if } \, v\in U_{_\varphi} \end{aligned} \right . \quad \, .\end{equation}

\medskip

\subsection{Bounded finite potent endomorphisms on Hilbert spaces.}\label{ss: BFP}

In 2021, the author of \cite{Fpa-Boun} studied the set of bounded finite potent endomorphisms on arbitrary Hilbert spaces. Henceforth, this set will be denoted as $B_{fp}(\h).$ \begin{thm}\label{T: Charact BFP}\cite[Theorem 3.7]{Fpa-Boun}(Characterization of bounded finite potent endomorphisms). Given a Hilbert space $\h$ and an endomorphism $\f \in \ed_{\C}(\h),$ then the following conditions are equivalent: \begin{itemize}
\item $\f \in B_{fp}(\h);$ 
\item $\h$ admits a decomposition $\h=W_{\f}\oplus U_{\f}$ where $W_{\f}$ and $U_{\f}$ are closed $\f-$invariant subspaces of $\h,$ $W_{\f}$ is finite- dimensional, $\f_{\vert W_{\f}}$ is an homeomorphism of $W_{\f}$ and $\f_{\vert U_{\f}}$ is a bounded nilpotent operator.
\item $\f$ has a decomposition $\f=\g + \phi,$ where $\g$ is a bounded finite rank operator, $\phi$ is a bounded nilpotent operator and $\g \circ \phi =0=\phi \circ \g.$
\end{itemize} 
\end{thm}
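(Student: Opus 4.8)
The plan is to prove the cyclic chain of implications $(1)\Rightarrow(2)\Rightarrow(3)\Rightarrow(1)$, taking the purely algebraic AST-decomposition of \cite{AST} as the backbone and upgrading it with the topological information available in a Hilbert space. The key algebraic input is that a finite potent $\f$ always admits the invariant splitting $\h = W_{\f}\oplus U_{\f}$ with $W_{\f}$ finite-dimensional, $\f_{\vert W_{\f}}$ an isomorphism and $\f_{\vert U_{\f}}$ nilpotent; the whole substance of the theorem is to show that, when $\f$ is moreover bounded, this splitting is a splitting by \emph{closed} subspaces with bounded behaviour, and conversely.

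For $(1)\Rightarrow(2)$, I start from the algebraic decomposition above and add the analytic content. Writing $r = i(\f)$, I would identify $W_{\f} = \Img(\f^{\,r})$ (the stable image) and $U_{\f} = \Ker(\f^{\,r})$ (the stable kernel); the inclusion $U_{\f}\subseteq \Ker(\f^{\,r})$ is the nilpotency of $\f_{\vert U_{\f}}$, and the reverse inclusion follows from the invertibility of $\f^{\,r}_{\vert W_{\f}}$. Then $W_{\f}$, being finite-dimensional, is automatically closed, and $\f_{\vert W_{\f}}$, a linear bijection of a finite-dimensional space, is automatically a homeomorphism. Since $\f$ is bounded, so is $\f^{\,r}$, whence $U_{\f}=\Ker(\f^{\,r})$ is closed as the kernel of a continuous operator, and $\f_{\vert U_{\f}}$, being the restriction of a bounded operator, is a bounded nilpotent operator. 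This yields exactly the decomposition required in $(2)$.

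For $(2)\Rightarrow(3)$, I observe that $\h = W_{\f}\oplus U_{\f}$ is an algebraic direct sum of two closed subspaces, so the continuous bijection $W_{\f}\times U_{\f}\to \h$, $(w,u)\mapsto w+u$, is a homeomorphism by the open mapping theorem, and hence the associated projections $P_{W_{\f}}$ and $P_{U_{\f}}$ are bounded. I then define $\g = \f\circ P_{W_{\f}}$ and $\phi = \f\circ P_{U_{\f}}$, which are precisely the CN-components of \eqref{eq:expl-CN-exp-3498353}. Both are bounded (since $\f_{\vert W_{\f}}$, $\f_{\vert U_{\f}}$ and the two projections are bounded); $\g$ has finite rank because $\Img\g\subseteq W_{\f}$; $\phi$ is nilpotent because it coincides with the nilpotent $\f_{\vert U_{\f}}$ on $U_{\f}$ and vanishes on $W_{\f}$; and the relations $\g\circ\phi = \phi\circ\g = 0$ follow from the $\f$-invariance of the two summands, exactly as in the algebraic CN-decomposition.

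Finally, for $(3)\Rightarrow(1)$, the operator $\f = \g+\phi$ is bounded as a sum of bounded operators. Since $\g\circ\phi = \phi\circ\g = 0$, the two summands commute and a trivial induction gives $\f^{\,n} = \g^{\,n} + \phi^{\,n}$ for all $n\geq 1$; if $\phi^{\,p}=0$, then $\f^{\,n} = \g^{\,n}$ for $n\geq p$, so $\Img(\f^{\,n}) = \Img(\g^{\,n})$ is finite-dimensional because $\g$ has finite rank, and therefore $\f$ is finite potent, i.e. $\f\in B_{fp}(\h)$. The main obstacle, and the genuinely analytic heart of the argument, is the topological upgrade shared by $(1)\Rightarrow(2)$ and $(2)\Rightarrow(3)$: that the a priori merely algebraic AST-splitting is in fact a topological direct sum. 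Everything reduces to the closedness of $U_{\f}=\Ker(\f^{\,r})$, secured by the boundedness of $\f^{\,r}$, combined with the open mapping theorem to obtain continuity of the projections; the remaining verifications are the same bookkeeping as in the finite-dimensional CN-theory.
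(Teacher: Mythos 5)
This theorem is not proved in the paper at all: it is imported verbatim as a preliminary from \cite[Theorem 3.7]{Fpa-Boun}, so there is no internal proof to compare yours against. Judged on its own merits, your argument is correct and is the natural one. In $(1)\Rightarrow(2)$, the identifications $W_{\f}=\Img(\f^{\,r})$ and $U_{\f}=\Ker(\f^{\,r})$ with $r=i(\f)$ are valid consequences of the AST-decomposition of \cite{AST} (note the paper defines $i(\f)$ precisely as the nilpotency order of $\f_{\vert U_{\f}}$, which is what your two inclusions use), and they deliver exactly the two analytic facts needed: $W_{\f}$ is closed because it is finite-dimensional, with $\f_{\vert W_{\f}}$ automatically a homeomorphism in finite dimension, and $U_{\f}$ is closed because it is the kernel of the bounded operator $\f^{\,r}$. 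In $(2)\Rightarrow(3)$ you correctly avoid assuming that $\f$ itself is bounded: the open mapping theorem applied to the continuous bijection $W_{\f}\times U_{\f}\to\h$ gives bounded projections, and then $\g=\f\circ P_{W_{\f}}$ and $\phi=\f\circ P_{U_{\f}}$ are bounded because $\f_{\vert W_{\f}}$ and $\f_{\vert U_{\f}}$ are, with $\Img(\g)\subseteq W_{\f}$, $\phi^{\,n}=\f^{\,n}\circ P_{U_{\f}}$ and $\g\circ\phi=\phi\circ\g=0$ all following from the $\f$-invariance of the summands. Finally $(3)\Rightarrow(1)$ is sound: the annihilation relations kill every mixed monomial in $(\g+\phi)^{n}$, so $\f^{\,n}=\g^{\,n}+\phi^{\,n}=\g^{\,n}$ for $n$ at least the nilpotency order of $\phi$, whence $\Img(\f^{\,n})$ is finite-dimensional and $\f\in B_{fp}(\h)$. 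The one point worth making explicit if you write this up is the induction showing the cross terms vanish (every mixed word of length $n\geq 2$ contains a consecutive $\g\phi$ or $\phi\g$), but this is routine and your sketch is adequate.
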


\subsubsection{The adjoint operator of a bounded finite potent endomorphism.}\label{ss: AdjoiintBFP}

Let us now consider two inner product vector spaces $(V,g)$ and $(H, \bar{g}).$ If $\f \colon V \to H$ is a linear map, a linear operator $\f^* \colon H \to V$ is called the \textbf{adjoint} of $\f$ when $$g(\f^*(h),v)=\bar{g}(h, \f(v)),$$ for all $v\in V$ and $h\in H.$ If $\f \in \ed_k(V),$ we say that $\f$ is \textbf{self-adjoint} when $\f = \f^*.$ The existence and uniqueness of the adjoint $\f^*$ of a bounded (or equivalently a continous) operator on arbitrary Hilbert spaces is immediately deduced form the Riesz Representation Theorem. Moreover, the adjoint of a bounded linear map is also bounded. In \cite[Section 4]{Fpa-Boun} the author studied the structure of the adjoint of a bounded finite potent endomorphism. Let us recall some of the results presented in there. If $\f \in B_{fp}(\h),$ with $\h=W_{\f}\oplus U_{\f}$ is the AST-decomposition induced by $\f$ and $\f=\f_1+\f_2$ is the CN-decomposition, then the adjoint operator $\f^*$ has the following properties: \begin{itemize}
\item[I.)]$\f^*\in B_{fp}(\h);$
\item[II.)]$i(\f^*)=i(\f);$
\item[III.)]$\f^*=(\f_1)^*+(\f_2)^*$ is the CN-decomposition of $\f^*;$
\item[IV.)]If $\h=W_{\f^{*}}\oplus U_{\f^{*}}$ is the AST-decomposition induced by $\f^*$ (notice this has sense due to $I$), then one has that $W_{\f^*}=[U_{\f}]^{\perp}$ and $U_{\f^*}=[W_{\f}]^{\perp}.$
\end{itemize}

\subsection{Generalized Inverses.}\label{ss: GenInv.}
 If $A\in \Mat_{n\times m}(k)$ is a matrix with entries in an arbitrary field $k,$ a matrix $A^{-}\in \Mat_{m\times n}(k)$ is a $\{1\}-$inverse of $A$ when $AA^{-}A=A$ and it is a $\{2\}-$inverse of $A$ when $A^{-}AA^{-}=A^-.$ Moreover, we say that a matrix $A^{+}\in \Mat_{m\times n}(k)$ is a reflexive generalized inverse of $A$ when $A^{+}$ is a $\{1\}-$inverse of $A$ and $A$ is a $\{1\}-$inverse of $A^{+},$ this is, $AA^{+}A=A$ and $A^{+}AA^{+}=A^{+}.$ Similarly, given two $k-$vector spaces $V$ and $W$ and a linear map $\f\colon V \to W,$ we will say thay a morphism $\f^{-}\colon W \to V$ is a $\{1\}-$inverse of $\f$ when $\f \circ \f^{-} \circ \f=\f$ and it is a $\{2\}-$inverse of $\f$ when $\f^-\circ\f\circ\f^-=\f^-.$ Similarly, a linear map $\f$  a linear map $\f^{+}\colon W \to V$ is a reflexive generalized inverse of $\f$ when $\f^{+}$ is a $\{1\}-$inverse of $\f$ and $\f$ is a $\{1\}-$inverse of $\f^{+}.$ Given any lineal operator $\f$ then we will denote as: $X_{\f}(1), X_{\f}(2), X_{\f}(1,2)$ the sets of $\{1\}-$inverses, $\{2\}-$inverses and the set of reflexive generalized inverses of $\f$  respectively.

\subsection{Group Inverse of Finite Potent Endomorphisms}  \label{ss:grop-matrix-fp}

Let $V$  be an arbitrary $k$-vector space and let $\varphi \in \ed_k (V)$ be a finite potent endomorphism of $V$. We say that a linear map $\varphi^{\#} \in \ed_k (V)$ is a group inverse of $\varphi$ when it satisfies the following properties:

\begin{itemize}
\item $\varphi \circ \varphi^{\#} \circ \varphi = \varphi$;

\item $\varphi^{\#} \circ \varphi \circ \varphi^{\#} = \varphi^{\#}$;

\item $\varphi^{\#} \circ \varphi = \varphi \circ \varphi^{\#}$.

\end{itemize}

 According to \cite[Lemma 3.4]{FpaGroup} we know that if there exists a group inverse $\varphi^{\#} \in \ed_k (V)$, then $i(\varphi) \leq 1$. Moreover \cite[Theorem 3.5]{FpaGroup} shows that $\varphi^D = \varphi^{\#}$ is the unique group inverse of $\varphi$, where $\varphi^D$ is its Drazin inverse.

The group inverse $\varphi^{\#}$ satisfies the following properties:

\begin{itemize}

\item $(\varphi^{\#})^{\#} = \varphi$;

\item $\varphi = \varphi^{\#}$ if and only if $(\varphi_{\vert_{W_\varphi}})^2 = \text{Id}_{\vert_{W_\varphi}}$;

\item if $n\in {\mathbb Z}^+$, then $(\varphi^n)^{\#} = (\varphi^{\#})^n$.

\end{itemize}

\medskip

\subsection{Moore-Penrose inverse of a bounded linear map.}\label{ss: Moore-Penrose}

 Let $(V,g)$ and $(H,\bar{g})$ be inner product spaces over $k,$ with $k=\C$ or $k=\R.$ 
\begin{defn}\label{D: Admissible}
Given a linear map $\f\colon V \to H,$ we say that $\f$ is admissible for the Moore-Penrose inverse when $V=\Ker(\f) \oplus [\Ker(\f)]^{\perp}$ and $H=\Img(\f) \oplus [\Img(\f)]^{\perp}.$
\end{defn}
According to \cite[Theorem 3.12]{MPFP}, if $(V,g)$ and $(H,\bar{g})$ are inner product spaces over $k,$ then $\f \colon V \to H$ is a linear map admissible for the Moore-Penrose inverse if and only if there exists a unique linear map $\f^{\dagger}\colon H \to V$ such that: \begin{itemize}
\item[I.)] $\f^{\dagger}$ is a reflexive generalized inverse of $\f;$
\item[II.)] $\f^{\dagger}\circ \f$ and $\f \circ \f^{\dagger}$ are self-adjoint, that is:\begin{itemize}
\item[a)] $g([\f^{\dagger}\circ \f](v),v')=g(v,[\f^{\dagger}\circ \f](v'));$
\item[b)] $\bar{g}([\f \circ \f^{\dagger}](h),h')=\bar{g}(h,[\f \circ \f^{\dagger}](h'));$
\end{itemize}
for all $v,v'\in V$ and $h,h'\in H.$ The operator $\f^{\dagger}$ is named the \textbf{Moore-Penrose inverse} of $\f$ and it is the unique linear map satisfying that: $$\f^{\dagger}(h) = \left \{ \begin{array}{ccl} (\f_{\vert [\Ker(\f)]^{\perp}})^{-1}(h) & \text{ if } & h\in \Img(\f) \\  0 & \text{ if } & h\in [\Img(\f)]^{\perp} \end{array} \right . .$$ 
\end{itemize}
The Moore-Penrose inverse $\f^{\dagger}\colon H\to V$ also satisfies the following properties: \begin{itemize}
\item $\f^{\dagger}$ is also admissible for the Moore-Penrose and $(\f^{\dagger})^{\dagger}=\f;$

\item If $\f\in \ed_k(V)$ and $\f$ is an isomorphism, then $\f^{\dagger}=\f^{-1};$

\item $\f^{\dagger}\circ \f=P_{[\Ker(\f)]^{\perp}};$

\item $\f\circ \f^{\dagger}=P_{\Img(\f)};$
\end{itemize}
where $P_{[\Ker(\f)]^{\perp}}$ and $P_{\Img(\f)}$ are the projections induced by the decompositions $V=\Ker(\f) \oplus [\Ker(\f)]^{\perp}$ and $H=\Img(\f) \oplus [\Img(\f)]^{\perp}$ respectively.
\subsubsection{Moore-Penrose inverse of a bounded linear map.}Finally, let us recall some properties of the Moore-Penrose inverse of a bounded linear map between two Hilbert spaces. Let $\h_1$ and $\h_2$ two Hilbert spaces. Given a linear map $\f \colon \h_1 \to \h_2$ that is admissible for Moore-Penrose inverse, this is, $\h_1=\Ker(\f) \oplus [\Ker(\f)]^{\perp}$ and $\h_2=\Img(\f) \oplus [\Img(\f)]^{\perp},$ it is well known that:
\begin{lem}\label{L: Adsii ImgCer}
If $\f \in B(\h_1,\h_2),$ then $\f$ is admissible for the Moore-Penrose inverse if and only if $\Img(\f)$ is a closed subspace of $\h_1.$
\end{lem}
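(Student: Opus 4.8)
The plan is to reduce the two-part admissibility condition to a single statement about the range, exploiting the fact that for a bounded operator the kernel splitting comes for free. Since $\f \in B(\h_1,\h_2)$ is continuous, $\Ker(\f) = \f^{-1}(\{0\})$ is a closed subspace of $\h_1$, and the projection theorem for Hilbert spaces gives $\h_1 = \Ker(\f) \oplus [\Ker(\f)]^{\perp}$ unconditionally. Hence $\f$ is admissible for the Moore-Penrose inverse if and only if the remaining splitting $\h_2 = \Img(\f) \oplus [\Img(\f)]^{\perp}$ holds, and the whole proof reduces to showing that this splitting is equivalent to the closedness of $\Img(\f)$.

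The implication from closedness to admissibility is then immediate: if $\Img(\f)$ is a closed subspace of $\h_2$, the projection theorem applied to $\Img(\f)$ yields $\h_2 = \Img(\f) \oplus [\Img(\f)]^{\perp}$, which by the reduction above is exactly admissibility.

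For the converse I would argue directly with orthogonal complements. Writing $M = \Img(\f)$ and assuming $\h_2 = M \oplus M^{\perp}$, I would invoke the standard identity $(M^{\perp})^{\perp} = \overline{M}$, valid for any subspace of a Hilbert space, together with $N \cap N^{\perp} = \{0\}$ for an arbitrary subspace $N$. Taking $x \in \overline{M} = (M^{\perp})^{\perp}$ and decomposing $x = m + n$ with $m \in M$ and $n \in M^{\perp}$ via the hypothesis, the inclusions $m \in M \subseteq (M^{\perp})^{\perp}$ and $x \in (M^{\perp})^{\perp}$ force $n = x - m \in (M^{\perp})^{\perp}$; since also $n \in M^{\perp}$, we get $n \in M^{\perp} \cap (M^{\perp})^{\perp} = \{0\}$, whence $x = m \in M$. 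This shows $\overline{M} \subseteq M$, so $\Img(\f)$ is closed. The only nontrivial ingredient is this converse direction, where the entire content is carried by the identity $(M^{\perp})^{\perp} = \overline{M}$; once that is in hand the argument is a routine manipulation, and no further obstacle is expected, the uniqueness of the Moore-Penrose operator playing no role since only the existence of the two orthogonal decompositions encoded in admissibility is needed.
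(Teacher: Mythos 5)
Your proof is correct. There is, however, nothing in the paper to compare it against: Lemma \ref{L: Adsii ImgCer} is stated in the preliminaries as a recollection of well-known facts about bounded operators on Hilbert spaces (with references such as \cite{UnbClos} and \cite{Closed} in the surrounding discussion), and the paper gives no proof of it. Your argument is therefore a self-contained justification of a fact the paper simply imports, and it is the standard one: boundedness makes $\Ker(\f)$ closed, so the projection theorem yields $\h_1 = \Ker(\f)\oplus[\Ker(\f)]^{\perp}$ unconditionally, reducing admissibility to the single condition $\h_2 = \Img(\f)\oplus[\Img(\f)]^{\perp}$; the forward implication is the projection theorem applied to a closed range, and your converse, writing $x\in\overline{M}=(M^{\perp})^{\perp}$ as $x=m+n$ and forcing $n\in M^{\perp}\cap(M^{\perp})^{\perp}=\{0\}$, correctly shows that any subspace $M$ satisfying $\h_2=M\oplus M^{\perp}$ must equal its closure. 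Two minor points worth noting: the lemma as printed contains a typo ($\Img(\f)$ is a subspace of $\h_2$, not $\h_1$), which you silently and correctly repair; and your closing observation is accurate and sharpens the statement slightly, since continuity enters only through the kernel splitting, while the implication from admissibility to closed range holds for arbitrary linear maps.
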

Also, it is well known that: \begin{itemize}
\item If $\f \in B(\h_1,\h_2)$ is admissible for the Moore-Penrose inverse, then $\f^{\dagger}\in B(\h_2, \h_1).$
\item If $\f \in B(\h_1,\h_2)$ is admissible for the Moore-Penrose inverse, then $\f^*$ is also admissible for the Moore-Penrose inverse and $(\f^*)^{\dagger}=(\f^{\dagger})^*.$
\end{itemize}
From the properties of the Moore-Penrose inverse of a linear map, if $\f \in B(\h_1,\h_2)$ with $\Img(\f)$ being a closed subspace of $\h_2,$ one has that: \begin{itemize}
\item $\f^*\circ (\f^*)^{\dagger}=P_{[\Ker(\f)]^{\perp}};$
\item $(\f^*)^{\dagger}\circ \f^*=P_{\Img(\f)}.$
\end{itemize} where $P_{[\Ker(\f)]^{\perp}}$ and $P_{\Img(\f)}$ are the projections induced by the decompositions $\h_1=\Ker(\f) \oplus [\Ker(\f)]^{\perp}$ and $\h_2=\Img(\f) \oplus [\Img(\f)]^{\perp}$ respectively. Moreover, the following usual relations between the adjoint and the Moore-Penrose hold:

\begin{lem}\label{L: Arit MPTRasp}
If $\f \in B(\h_1,\h_2)$ is such that $\Img(\f)$ is a closed subspace of $\h_2,$ then one has that:
\begin{itemize}
\item[I.)] $\f^* \circ \f \circ \f^{\dagger}=\f^*;$
\item[II.)] $\f^{\dagger}\circ \f \circ \f^* = \f^*;$
\item[III.)] $(\f^*)^{\dagger}\circ \f^* \circ \f =\f;$
\item[IV.)] $\f\circ \f^*\circ (\f^*)^{\dagger}=\f.$
\end{itemize}
\end{lem}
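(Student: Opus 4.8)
**The plan is to prove Lemma \ref{L: Arit MPTRasp} by reducing each of the four identities to the two key projection facts already established for the Moore-Penrose inverse of a bounded map with closed image.** Specifically, I would lean on the identities $\f \circ \f^{\dagger} = P_{\Img(\f)}$ and $\f^{\dagger}\circ \f = P_{[\Ker(\f)]^{\perp}}$, together with the self-adjointness of these two projections, which is guaranteed by condition II.) in the characterization of $\f^{\dagger}$ in Subsection \ref{ss: Moore-Penrose}. The observation that makes everything short is that an orthogonal projection $P$ onto a closed subspace is self-adjoint, so $P^* = P$, and the adjoint of a composite reverses order and applies $*$ termwise.

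\emph{First I would prove II.)}, namely $\f^{\dagger}\circ \f \circ \f^* = \f^*$. Since $\f \circ \f^{\dagger} = P_{\Img(\f)}$ is self-adjoint, taking adjoints gives $(\f^{\dagger})^* \circ \f^* = P_{\Img(\f)}$ as well. Now $\f^{\dagger}\circ \f \circ \f^* = \f^{\dagger}\circ (\f \circ \f^{\dagger})^* \circ \f^* $ is not quite the right manipulation; instead I would write $\f^{\dagger}\circ \f \circ \f^* = \f^{\dagger}\circ (\f \circ \f^*)$ and use that $\f^* = \f^* \circ (\f \circ \f^{\dagger})^* = \f^* \circ \f \circ \f^{\dagger}$ because $\f\circ\f^{\dagger}$ is a self-adjoint idempotent with range $\Img(\f)$ containing $\Img(\f\circ\f^*)$. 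The cleanest route is: start from $\f = \f\circ\f^{\dagger}\circ\f = P_{\Img(\f)}\circ \f$, take adjoints to get $\f^* = \f^* \circ P_{\Img(\f)} = \f^* \circ \f \circ \f^{\dagger}$, which is precisely identity I.). Identity II.) then follows symmetrically from $\f = \f \circ (\f^{\dagger}\circ \f) = \f \circ P_{[\Ker(\f)]^{\perp}}$, taking adjoints to obtain $\f^* = P_{[\Ker(\f)]^{\perp}} \circ \f^* = \f^{\dagger}\circ \f \circ \f^*$.

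\emph{For III.) and IV.)} I would apply the previous two identities to the map $\f^*$ in place of $\f$, using the stated fact that $(\f^*)^{\dagger} = (\f^{\dagger})^*$ and that $\f^*$ is again admissible with closed image whenever $\f$ is. Concretely, identity I.) written for $\f^*$ reads $\f\circ\f^*\circ(\f^*)^{\dagger} = \f$ after substituting $(\f^*)^* = \f$, which is exactly IV.); and identity II.) for $\f^*$ yields $(\f^*)^{\dagger}\circ\f^*\circ\f = \f$, which is III.). Thus III.) and IV.) require no new work beyond invoking the symmetry $\f \leftrightarrow \f^*$.

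\emph{The main obstacle} is purely bookkeeping: ensuring that each adjoint is taken of a genuinely self-adjoint operator, so that $P^* = P$ may be applied without sign or conjugation errors, and confirming that the substitution $\f \mapsto \f^*$ is legitimate — i.e. that $\f^*$ has closed image and is admissible, which is exactly the second bulleted property recalled just before the lemma. Once the two projection identities and their self-adjointness are in hand, no genuine analytic input is needed; everything is formal manipulation of idempotents, and the boundedness hypothesis serves only to guarantee that the adjoints and the Moore-Penrose inverse exist as bounded operators.
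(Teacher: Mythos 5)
Your proof is correct. Note that the paper itself gives no proof of Lemma \ref{L: Arit MPTRasp}: it appears in the preliminaries, quoted as one of the ``usual relations'' between the adjoint and the Moore-Penrose inverse, so there is no paper argument to compare against. Your derivation --- getting I.) and II.) by taking adjoints in $\f = P_{\Img(\f)}\circ\f$ and $\f = \f\circ P_{[\Ker(\f)]^{\perp}}$ and invoking the self-adjointness of the two Moore-Penrose projections, then obtaining III.) and IV.) by the substitution $\f \mapsto \f^*$ --- uses exactly the facts the paper recalls immediately before the lemma (that $\f^*$ is again admissible with $(\f^*)^{\dagger}=(\f^{\dagger})^*$, and that $\f\circ\f^{\dagger}$ and $\f^{\dagger}\circ\f$ are the self-adjoint projections onto $\Img(\f)$ and $[\Ker(\f)]^{\perp}$), so it is precisely the standard justification the author is implicitly relying on.
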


\subsection{Space Pre-Order}\label{ss: SpacePreord}
The space pre-order was introduced in \cite[Section 3.2]{Ind} as a tool to study most of the matrix partial orders that include $\{1\}-$inverses on their definitions. In \cite{Djord} the definition of space pre-order was extended to the class of bounded linear operators on Banach spaces. 
\begin{defn}\label{D: SpacePreOrd}
Let $\f, \g\in \ed_k(\mathcal{B})$ be two bounded linear operators over a Banach space $\mathcal{B}.$ Then $\f$ is said to be below $\g$ under the space pre-order if $\Img(\f) \subseteq \Img(\g)$ and $\Ker(\g) \subseteq \Ker(\f) $ (or equivalently $\Img(\f^*)\subseteq \Img(\g^*)$). We will denote the space pre-order by $<^s$ and we will write $\f <^s \g,$ whenever $\f$ is below $\g$ under the space pre-order. 
\end{defn}

\section{Some remarks on the Moore-Penrose inverse of a bounded finite potent operator}\label{S: MP bounded operator}

The aim of this section is to include some results related to the Moore-Penrose inverse of a bounded finite potent endomorphism that are not present in literature and that will be used in the rest of the paper.

\subsection{On the closedness of the image of a bounded finite potent operator.}\label{ss: ImgBFP}

Given a Hilbert space $\h$ it is well known that the closedness of the image of an operator $\f$ is related to the solvability of the operator equation $\f x=y.$ Moreover, the closedness of $\Img(\f)$ is equivalent to $\f$ being relatively regular, this is, for $\f \in B(\h)$ admitting a bounded $\{1\}-$inverse $\f^-\in B(\h)$ i.e.  $\f \circ \f^- \circ \f = \f,$ so that if $\f x=y$ can be solved then $x=\f^- y$ is a solution. For details, reader is directed to \cite{UnbClos} and \cite{Closed} . There are a lot of important applications of the closedness of the image in perturbation theory and in the context of the spectral study of differential equations, see for instance \cite{Gold}. For an interested reader let just point out that the study of closedness of the image is dealt within Banach spaces too, with several applications, for example, \cite{Atkin}. Now notice that in \cite{MPFP}, the authors pose the study of the Moore-Penrose inverse of a bounded finite potent operator $\f \in B_{fp}(\h)$ by adding the so-called admissible (for the Moore-Penrose) condition on $\f$ (Definition \ref{D: Admissible}), and proving Lemma \ref{L: Adsii ImgCer} for bounded finite potent operators. However, being a bounded finite potent operator could somehow imply any condition on the closedness of the image. This question has not being studied in the framework of bounded finite potent endomorphisms and the author considers that it is important to determine whether the admissibility condition is redundant or necessary. \\We then devote this short section to solve this question. Precisely, a counter example is given, showing that bounded finite potent operators do not have closed image in general, this is, they are not relatively regular operators and therefore, if $\f \in B_{fp}(\h)$ then $\f$ is not necessarily admissible for a Moore-Penrose inverse.
\begin{cex}\label{cex: ImgBFP}
If for every $n\in \N$ we denote by $e_n$ to the sequence: $$e_n=(0, \dots, \stackrel{(n)}{1},0, \dots), $$ then the family (of sequences) $\{e_n \}_{n\in \N}$ is a (Schauder) basis of $\ell^p,$ $1 \leq p < \infty.$ Now, let us consider the following operator: $$\begin{array}{rccl}
\f \colon & \ell^2  & \rightarrow & \ell^2 \\
& e_n & \mapsto & \left \{ \begin{array}{ccl} 0 & \text{ if } & \text{ n is odd } \medskip \\  \dfrac{e_{n+1}}{n+1} & \text{ if } & \text{ n is even } \end{array} \right . ,
\end{array}$$ this is, $\f$ is the composition of the operator consisting on multiplying by the sequence $(0,\frac{1}{3},0, \frac{1}{5},0,\frac{1}{7},0\dots )$ and then the operator of right traslation (or unilateral shift operator), which are both continuous. In fact, $\f$ is continuous and compact, and it is clearly a bounded (continuous) finite potent operator (it is nilpotent).\\ Notice that $\f^2=0$ so $\Img(\f)^2$ is closed. However, $\Img(\f)$ is not closed: the vectors with a finite number of non-null components $(0,0,\frac{1}{3},0,\frac{1}{5},0,\dots , \frac{1}{2n+1},0,0,0, \dots)$ for $n\geq 1,$ are in the image but its limit, in $\ell^2,$ is not. Indeed recall, compact operators do not have closed image unless they have finite rank.
\end{cex} 

Further in this article, we will be interested in finite potent endomorphisms with index lesser or equal than one. So let us make some remarks on them.\\
Despite the previous counterexample, we shall point out that in the case of a finite potent endomorphism of index lesser or equal than one over any inner product vector space, we shall not add any hypothesis in order to obtain the admissibility for the Moore-Penrose inverse. Notice that if $(V,g)$ is any inner product vector space over $k=\R$ or $k=\C,$ and $\f\in \ed_k(V)$ with $i(\f)\leq 1$ then, the AST-decomposition it induces is $V=W_{\f}\oplus U_{\f}=\Img(\f)\oplus\Ker(\f)$ and by definition $W_{\f}=\Img(\f)$ is a finite dimensional k-vector subspace. Therefore, $\Img(\f)$ it is a closed subspace and $V$ also admits the decomposition: $V=\Img(\f)\oplus[\Img(\f)]^{\perp}.$ In short:

\begin{lem}\label{R: FPileq1isAdmissible}
Let $(V,g)$ be any inner product vector space over $k=\R$ or $k=\C$ and let $\f\in \ed_k(V)$ be any finite potent endomorphism with $i(\f)\leq 1,$ then $\Img(\f)$ is a closed subspace of $V.$
\end{lem}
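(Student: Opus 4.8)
The plan is to show that the image of a finite potent endomorphism of index at most one is finite-dimensional, since a finite-dimensional subspace of an inner product space is automatically closed. The key observation is the characterization recorded in the preliminaries: for $\f\in\ed_k(V)$ finite potent, the condition $i(\f)\le 1$ is equivalent (via \eqref{eq:index1}) to $W_\f=\Img(\f)$, where $V=W_\f\oplus U_\f$ is the AST-decomposition. By the defining properties of the AST-decomposition, $W_\f$ is a finite-dimensional $\f$-invariant subspace on which $\f$ restricts to an isomorphism. Hence $\Img(\f)=W_\f$ is finite-dimensional.

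First I would invoke the AST-decomposition $V=W_\f\oplus U_\f$ associated to the finite potent endomorphism $\f$, recalling that $W_\f$ is finite-dimensional and $\f_{\vert U_\f}$ is nilpotent. Next I would apply the equivalence in \eqref{eq:index1}, which asserts that $i(\f)\le 1$ holds if and only if $W_\f=\Img(\f)$. Combining these two facts immediately yields that $\Img(\f)=W_\f$ is a finite-dimensional $k$-vector subspace of $V$. Finally I would conclude by the standard fact that every finite-dimensional subspace of an inner product space over $\R$ or $\C$ is closed, so that $\Img(\f)$ is closed in $V$.

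There is essentially no obstacle here: the statement is a direct corollary of material already assembled in the preliminaries, and indeed the paragraph preceding the lemma already walks through exactly this reasoning. The only point meriting a word of care is the justification that a finite-dimensional subspace is closed; this is standard and relies on the equivalence of norms on finite-dimensional spaces (or, concretely, on the fact that a finite-dimensional subspace of an inner product space admits an orthogonal complement and the associated orthogonal projection is continuous). Since the lemma is stated for arbitrary inner product spaces rather than complete ones, I would emphasize that closedness of a finite-dimensional subspace does not require completeness of the ambient space, so the hypothesis that $(V,g)$ be merely an inner product vector space (not necessarily a Hilbert space) is enough.
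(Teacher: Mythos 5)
Your proof is correct and follows exactly the paper's own reasoning, which appears in the paragraph preceding the lemma: use the AST-decomposition together with the equivalence \eqref{eq:index1} to identify $\Img(\f)=W_\f$ as finite-dimensional, then invoke the standard fact that finite-dimensional subspaces of inner product spaces are closed. Your added remark that this closedness does not require completeness of $(V,g)$ is a sound clarification but does not change the argument.
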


 Let $\h$ be a Hilbert space and let $\f\in \mathcal{B}(\h)$ be a bounded finite potent endomorphism with $i(\f)\leq 1.$ It is already known that $\f^{\dagger}$ is bounded (if $\f$ is so) and that in general, it needs not to be finite potent. However, it is natural to ask if this is changed by $\f$ being of index lesser or equal than one.

\begin{prop}\label{P: MPdeind1}
Given a bounded finite potent endomorphism $\f\in \mathcal{B}(\h)$ with $i(\f)\leq 1$ then $\f^{\dagger}$ is also a bounded finite potent endomorphism with $i(\f^{\dagger})\leq 1.$
\end{prop}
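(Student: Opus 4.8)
The plan is to exploit the concrete structure that $i(\f)\leq 1$ forces on both the AST-decomposition and the Moore-Penrose inverse. Since $\f\in\mathcal{B}(\h)$ is bounded finite potent with $i(\f)\leq 1$, equation \eqref{eq:index1} gives $W_{\f}=\Img(\f)$ and $U_{\f}=\Ker(\f)$, so the AST-decomposition reads $\h=\Img(\f)\oplus\Ker(\f)$ with $\Img(\f)$ finite-dimensional. By Lemma \ref{R: FPileq1isAdmissible} the image is closed, hence by Lemma \ref{L: Adsii ImgCer} the operator $\f$ is admissible for the Moore-Penrose inverse, and $\h$ also decomposes as $\h=\Img(\f)\oplus[\Img(\f)]^{\perp}$. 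Thus $\f^{\dagger}$ exists and, being the Moore-Penrose inverse of a bounded operator with closed image, is bounded. This disposes of the boundedness part immediately and sets up the two orthogonal decompositions I will compare.

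The key step is to identify $\Img(\f^{\dagger})$ and show it is finite-dimensional, which already yields finite potence since any finite-rank endomorphism is finite potent. First I would read off from the explicit formula in Subsection \ref{ss: Moore-Penrose} that $\f^{\dagger}$ kills $[\Img(\f)]^{\perp}$ and acts on $\Img(\f)$ by the inverse of $\f_{\vert[\Ker(\f)]^{\perp}}$, so that $\Img(\f^{\dagger})=[\Ker(\f)]^{\perp}$. Because $\Img(\f)$ is finite-dimensional and $\f$ maps $[\Ker(\f)]^{\perp}$ isomorphically onto $\Img(\f)$, the subspace $[\Ker(\f)]^{\perp}$ is likewise finite-dimensional; hence $\f^{\dagger}$ has finite rank and is a bounded finite potent endomorphism.

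It then remains to verify $i(\f^{\dagger})\leq 1$. By the characterization \eqref{eq:index1} applied to $\f^{\dagger}$, it suffices to check that $\Img(\f^{\dagger})\oplus\Ker(\f^{\dagger})$ is the AST-decomposition induced by $\f^{\dagger}$, equivalently that $\h=\Img(\f^{\dagger})\oplus\Ker(\f^{\dagger})$ with $\f^{\dagger}_{\vert\Img(\f^{\dagger})}$ an isomorphism. Here $\Ker(\f^{\dagger})=[\Img(\f)]^{\perp}$ and $\Img(\f^{\dagger})=[\Ker(\f)]^{\perp}$ from the previous step, so the sum is direct precisely because $\f$ is admissible for the Moore-Penrose inverse. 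The essential verification is that $\f^{\dagger}$ restricts to an automorphism of $[\Ker(\f)]^{\perp}$: I would argue that $\f^{\dagger}$ sends $\Img(\f)$ isomorphically onto $[\Ker(\f)]^{\perp}$, and then use the property $(\f^{\dagger})^{\dagger}=\f$ together with $\f\big([\Ker(\f)]^{\perp}\big)=\Img(\f)$ to transport the isomorphism back, confirming that $\f^{\dagger}$ maps $\Img(\f^{\dagger})$ bijectively to itself.

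The main obstacle I anticipate is that $\Img(\f^{\dagger})=[\Ker(\f)]^{\perp}$ and $\Img(\f)=W_{\f}$ are in general \emph{different} finite-dimensional subspaces, so the isomorphism $\f^{\dagger}_{\vert\Img(\f^{\dagger})}\colon\Img(\f^{\dagger})\to\Img(\f^{\dagger})$ is not read off directly from the defining formula, which only describes $\f^{\dagger}$ on $\Img(\f)$. Reconciling these two decompositions—tracking how $\f^{\dagger}$ moves vectors between $[\Ker(\f)]^{\perp}$ and $\Img(\f)$ and checking the composite returns to $[\Ker(\f)]^{\perp}$ bijectively—is the delicate point; invoking $(\f^{\dagger})^{\dagger}=\f$ and the admissibility of both $\f$ and $\f^{\dagger}$ is the cleanest way I see to close this gap.
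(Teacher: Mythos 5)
Your route is genuinely different from the paper's (you work directly from the explicit formula for $\f^{\dagger}$ and orthogonal complements, whereas the paper passes through the adjoint), and your boundedness and finite-rank steps are correct: $\Img(\f^{\dagger})=[\Ker(\f)]^{\perp}$ is finite-dimensional because $\f$ carries it isomorphically onto $\Img(\f)=W_{\f}$. However, there is a genuine gap in the index step. You justify $\h=\Img(\f^{\dagger})\oplus\Ker(\f^{\dagger})=[\Ker(\f)]^{\perp}\oplus[\Img(\f)]^{\perp}$ by saying the sum is direct ``precisely because $\f$ is admissible for the Moore-Penrose inverse''. Admissibility only provides $\h=\Ker(\f)\oplus[\Ker(\f)]^{\perp}$ and $\h=\Img(\f)\oplus[\Img(\f)]^{\perp}$; it says nothing about the relative position of the two orthogonal complements. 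Concretely, $\f(x,y)=(y,0)$ on $\R^2$ is admissible (every endomorphism of a finite-dimensional inner product space is), yet $[\Ker(\f)]^{\perp}=[\Img(\f)]^{\perp}=\{(0,y)\,:\,y\in\R\}$, so the sum of the two complements is neither direct nor all of $\R^2$. What your argument actually needs is the hypothesis $i(\f)\leq 1$, via $\h=\Img(\f)\oplus\Ker(\f)$: directness follows from $[\Ker(\f)]^{\perp}\cap[\Img(\f)]^{\perp}=(\Ker(\f)+\Img(\f))^{\perp}=\h^{\perp}=\{0\}$, and the sum equals $\h$ because $[\Ker(\f)]^{\perp}+[\Img(\f)]^{\perp}$ is closed (a finite-dimensional subspace plus a closed one) and its closure is $(\Ker(\f)\cap\Img(\f))^{\perp}=\{0\}^{\perp}=\h$.

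The same fact also repairs, more cleanly, your final step, where the appeal to $(\f^{\dagger})^{\dagger}=\f$ to ``transport the isomorphism back'' is left vague: $\Img(\f^{\dagger})=[\Ker(\f)]^{\perp}$ is finite-dimensional and trivially $\f^{\dagger}$-invariant, and $\Ker(\f^{\dagger})\cap\Img(\f^{\dagger})=[\Img(\f)]^{\perp}\cap[\Ker(\f)]^{\perp}=\{0\}$ makes $(\f^{\dagger})_{\vert_{\Img(\f^{\dagger})}}$ injective, hence an automorphism (and a homeomorphism, by finite-dimensionality); together with $(\f^{\dagger})_{\vert_{\Ker(\f^{\dagger})}}=0$, Theorem \ref{T: Charact BFP} exhibits $\h=\Img(\f^{\dagger})\oplus\Ker(\f^{\dagger})$ as the AST-decomposition of $\f^{\dagger}$, and $U_{\f^{\dagger}}=\Ker(\f^{\dagger})$ gives $i(\f^{\dagger})\leq 1$ by \eqref{eq:index1}. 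This is exactly where the paper's adjoint route is more economical: from Section \ref{ss: AdjoiintBFP} one gets $i(\f^*)=i(\f)\leq 1$, hence $\h=\Img(\f^*)\oplus\Ker(\f^*)$ is an AST-decomposition for free, and the identities $\Img(\f^{\dagger})=\Img(\f^*)$, $\Ker(\f^{\dagger})=\Ker(\f^*)$ finish the proof; your construction lands on the same decomposition, since here $[\Ker(\f)]^{\perp}=\Img(\f^*)$ and $[\Img(\f)]^{\perp}=\Ker(\f^*)$.
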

\begin{proof}
This result is a consequence of the relationship between the Moore-Penrose inverse and the adjoint operator. Recall that if $\f$ is a bounded finite potent endomorphism, so is $\f^*$ and moreover $i(\f)=i(\f^*)$ (see Section \ref{ss: AdjoiintBFP}). Therefore $i(\f^*)\leq 1,$ so $V=W_{\f^*}\oplus U_{\f^*}=\Img(\f^*)\oplus\Ker(\f^*).$ The claim is proved as $\Ker(\f^{\dagger})=\Ker(\f^*)$ and $\Img(\f^{\dagger})=\Img(\f^*),$ both being closed $\f^{\dagger}-$invariant subspaces of $\h$ with $V=\Img(\f^{\dagger})\oplus\Ker(\f^{\dagger}).$ In fact, $(\f^{\dagger})_{\vert_{\Img(\f^{\dagger})}}$ is an homeomorphism with $\Img(\f^{\dagger})$ being finite dimensional and $(\f^{\dagger})_{\vert_{\Ker(\f^{\dagger})}}$ a bounded nilpotent operator. 
\end{proof}

\section{Core Inverse of Finite Potent Endomorphisms} \label{s:core-inv-fp}
The study of the Core-Inverse could be approached in two different ways, either as a restriction of the left-Drazin-Moore-Penrose inverse, ``lDMP''; to index $1$ endomorphisms or using Baksalary and Trenkler's original definition (see \cite{BakTren}). Notice that F. Pablos, in \cite{FpDMP} , studied the ``lDMP'' inverse of finite potent endomorphisms and therefore we shall use the other approach to provide a new vision of the theory with new results. This is, we will start by proving the equivalence of Baksalary and Trenkler's definition with the restriction of ``lDMP'' inverses to index $1$ endomorphisms and later we shall study the properties that are not shared between Core-Inverses and ``lDMP'' inverses in general and thus, they are not present already in \cite{FpDMP}. We must emphasize that this new properties are a consequence of the well known equality $\f^{D}=\f^{\#}$ when $i(\f)=1$ (recall the obvious fact that $\f^{D}\in X_{\f}(2)$ in general and $\f^D=\f^{\#}\in X_{\f}(1,2)$ when $i(\f)=1$). All this shall be done within the framework of bounded finite potent endomorphisms (over arbitrary inner product space over a field) that are admissible for the Moore-Penrose inverse.

\medskip
Let $(V,g)$  be an inner product vector space over $k$, with $k = \mathbb C$ or $k = \mathbb R$. In particular, $V$ can be an infinite-dimensional vector space. 

\begin{defn} \label{D:core-inver-fp93735} 

Given a finite potent endomorphism $\varphi \in \ed_k (V)$ admissible for the Moore-Penrose inverse, we say that a linear map $\varphi^{\textcircled\#} \in \ed_k (V)$ is a ``Core Inverse'' of $\varphi$ when:

\begin{itemize}

\item $\varphi \circ \varphi^{\textcircled\#} = P_{\Img(\f)}$;

\item $\Img\varphi^{\textcircled\#} \subseteq \Img\varphi$;
\end{itemize}

where $P_{\Img(\f) } $ is the orthogonal projection induced by the decomposition $$V = \Img(\f) \oplus [\Img(\f)]^\perp\, .$$
\end{defn}

 Using the above mentioned properties of the Moore-Penrose inverse, the two conditions referred to in Definition \ref{D:core-inver-fp93735} can be replaced by the following:

\begin{itemize}
\item $\varphi \circ \varphi^{\textcircled\#} = \varphi \circ \varphi^\dagger$;

\item $P_{\text{Im } \f} \circ \varphi^{\textcircled\#} = \varphi^{\textcircled\#}, \text{ or equivalently } \f\circ \f^{\dagger}\circ \f^{\textcircled\#}=\f^{\textcircled\#}. $
\end{itemize}

Moreover, by substituting the first condition of the definition in the second one gets: \begin{equation}\label{eq: fcircffcirc2}
(\f\circ \f^{\textcircled\#})\circ \f^{\textcircled\#}=\f\circ(\f^{\textcircled\#})^2=\f^{\textcircled\#}
\end{equation}
and by iteration, one reaches that: 
\begin{equation}\label{eq: fcircfnfcircn}
\f^{\textcircled\#}=\f^{n-1}\circ(\f^{\textcircled\#})^n \text{ for every }n>1.
\end{equation}

\begin{prop}\label{P: ConsecuenciasDefinicion}
Given a finite potent endomorphism $\varphi \in \ed_k (V)$ admissible for the Moore-Penrose inverse, if $\f^{\textcircled\#}$ exists, then $i(\f)\leq 1.$
\end{prop}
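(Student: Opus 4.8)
The plan is to translate the two defining conditions of the Core Inverse into statements about the AST-decomposition $V = W_{\f} \oplus U_{\f}$ and then to invoke the characterization \eqref{eq:index1}, according to which $i(\f) \leq 1$ is equivalent to $W_{\f} = \Img(\f)$. Since one always has $W_{\f} \subseteq \Img(\f)$ --- because $\f_{\vert W_{\f}}$ is an isomorphism of $W_{\f}$ onto itself, so every element of $W_{\f}$ is the $\f$-image of another element of $W_{\f}$ --- it will suffice to establish the reverse inclusion $\Img(\f) \subseteq W_{\f}$. (If $i(\f)=0$ there is nothing to prove, so I may assume $i(\f)\geq 1$.)

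The first key step is to locate the image of $\f^{\textcircled\#}$ inside $W_{\f}$. For this I would use the iterated identity \eqref{eq: fcircfnfcircn}, namely $\f^{\textcircled\#} = \f^{n-1} \circ (\f^{\textcircled\#})^n$ for every $n > 1$. Since this exhibits $\f^{\textcircled\#}$ as something composed on the left with $\f^{n-1}$, it forces $\Img(\f^{\textcircled\#}) \subseteq \Img(\f^{n-1})$ for all such $n$. Choosing $n-1 \geq i(\f)$ and using that $\f_{\vert U_{\f}}$ is nilpotent of order $i(\f)$ while $\f_{\vert W_{\f}}$ is an isomorphism, one computes $\Img(\f^{n-1}) = \f^{n-1}(W_{\f}) \oplus \f^{n-1}(U_{\f}) = W_{\f}$. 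Hence $\Img(\f^{\textcircled\#}) \subseteq W_{\f}$.

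The second step combines this with the first defining condition $\f \circ \f^{\textcircled\#} = P_{\Img(\f)}$. On the one hand, an orthogonal projection has image equal to the subspace onto which it projects, so $\Img(\f \circ \f^{\textcircled\#}) = \Img(P_{\Img(\f)}) = \Img(\f)$. On the other hand, since $\Img(\f^{\textcircled\#}) \subseteq W_{\f}$ and $W_{\f}$ is $\f$-invariant, $\Img(\f \circ \f^{\textcircled\#}) = \f(\Img(\f^{\textcircled\#})) \subseteq \f(W_{\f}) = W_{\f}$. Putting these together yields $\Img(\f) \subseteq W_{\f}$, which is exactly the inclusion needed; therefore $W_{\f} = \Img(\f)$ and $i(\f) \leq 1$ by \eqref{eq:index1}.

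The argument reduces to a short chain of inclusions, so I do not expect a serious obstacle once the relevant objects are identified. The only point requiring genuine care is the passage from the purely algebraic relation \eqref{eq: fcircfnfcircn} to the geometric statement $\Img(\f^{\textcircled\#}) \subseteq W_{\f}$: it is here that the finite potent structure --- specifically the finiteness of the nilpotency order of $\f_{\vert U_{\f}}$, which lets me pick a single exponent $n-1\geq i(\f)$ stabilizing $\Img(\f^{n-1})$ to $W_{\f}$ --- is actually used.
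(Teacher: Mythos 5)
Your proof is correct and takes essentially the same approach as the paper: both arguments rest on the iterated identity \eqref{eq: fcircfnfcircn}, compare $\Img(\f)$ with the images of powers of $\f$ (which stabilize at $W_{\f}$), and conclude via \eqref{eq:index1}. The only difference is bookkeeping: the paper first derives the $\{1\}$-inverse relation $\f\circ\f^{\textcircled\#}\circ\f=\f$ and deduces $\Img(\f)\subseteq\Img(\f^{n})$ for all $n$, whereas you first trap $\Img(\f^{\textcircled\#})$ inside $W_{\f}$ and then use $\Img(\f)=\Img(P_{\Img(\f)})=\f(\Img(\f^{\textcircled\#}))\subseteq \f(W_{\f})=W_{\f}$ --- the same inclusions assembled in a slightly different order.
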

\begin{proof}
Firstly notice that from $\varphi \circ \varphi^{\textcircled\#} = \varphi \circ \varphi^\dagger$ and $  \f\circ \f^{\dagger}\circ \f^{\textcircled\#}=\f^{\textcircled\#}$ one deduces that $$\f\circ\f^{\textcircled\#}\circ\f=\f, $$ this is, $\f^{\textcircled\#}\in X_{\f}(1).$ Moreover, using \eqref{eq: fcircfnfcircn} and substituting it on $\f\circ\f^{\textcircled\#}\circ\f=\f, $ one gets: $$\f=\f^{n}\circ (\f^{\textcircled\#})^n\circ\f, \text{ for every  }n>1. $$ Therefore, $\Img(\f)\subseteq \Img(\f^n)$ for every $n>1,$ and as the other inclusion is true for every linear operator we conclude that: $$\Img(\f)^{i(\f)}=W_{\f}=\Img(\f) $$ and therefore $i(\f)\leq 1$ (recall \eqref{eq:index1}) as we wanted to prove.
\end{proof}

\begin{cor}\label{C: fcircledfpdeind1}
Given a finite potent endomorphism $\varphi \in \ed_k (V),$ if $\f^{\textcircled\#}$ exists, then $\f^{\textcircled\#}$ is a finite potent endomorphism with $i(\f^{\textcircled\#})\leq 1.$ Moreover, $\f^{\textcircled\#}$ is unique. 
\end{cor}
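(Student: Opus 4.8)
The plan is to establish the three assertions of Corollary \ref{C: fcircledfpdeind1} in sequence, leaning heavily on Proposition \ref{P: ConsecuenciasDefinicion}, which already guarantees that if $\f^{\textcircled\#}$ exists then $i(\f)\leq 1$. First I would record the geometric description of $\f^{\textcircled\#}$ that follows from the two defining conditions. Since $i(\f)\leq 1$, the AST-decomposition reads $V=\Img(\f)\oplus\Ker(\f)$, with $\f_{\vert\Img(\f)}$ an isomorphism of the finite-dimensional space $W_\f=\Img(\f)$. The condition $\Img\f^{\textcircled\#}\subseteq\Img\f$ forces the image of $\f^{\textcircled\#}$ to sit inside this finite-dimensional subspace, which already suggests finite potency; the condition $\f\circ\f^{\textcircled\#}=P_{\Img(\f)}$ then pins down how $\f^{\textcircled\#}$ acts.

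Next I would prove finite potency together with $i(\f^{\textcircled\#})\leq 1$. The key observation is that $\Img(\f^{\textcircled\#})\subseteq\Img(\f)=W_\f$ is finite dimensional, so in particular $\f^{\textcircled\#}$ has finite rank and is therefore automatically finite potent. To get the index bound, I would show that $\f^{\textcircled\#}$ restricted to its own image is an isomorphism and that its kernel is complementary and invariant. From \eqref{eq: fcircffcirc2}, namely $\f\circ(\f^{\textcircled\#})^2=\f^{\textcircled\#}$, together with $\f\circ\f^{\textcircled\#}=P_{\Img(\f)}$, one sees that on $\Img(\f^{\textcircled\#})$ the operator $\f$ acts as a left inverse of $\f^{\textcircled\#}$, giving $\Img\big((\f^{\textcircled\#})^2\big)=\Img(\f^{\textcircled\#})$; hence the rank of $\f^{\textcircled\#}$ stabilizes immediately and $i(\f^{\textcircled\#})\leq 1$ by the characterization \eqref{eq:index1}.

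For uniqueness, I would take two Core Inverses $\g_1,\g_2$ of $\f$ and show $\g_1=\g_2$. Both satisfy $\f\circ\g_i=P_{\Img(\f)}$, so $\f\circ(\g_1-\g_2)=0$, meaning $\Img(\g_1-\g_2)\subseteq\Ker(\f)$. On the other hand, each $\g_i$ satisfies $\Img(\g_i)\subseteq\Img(\f)$, so $\Img(\g_1-\g_2)\subseteq\Img(\f)$. Since $i(\f)\leq 1$ gives the direct sum $V=\Img(\f)\oplus\Ker(\f)$, the intersection $\Img(\f)\cap\Ker(\f)$ is zero, whence $\Img(\g_1-\g_2)=0$ and $\g_1=\g_2$.

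The main obstacle I anticipate is the index bound $i(\f^{\textcircled\#})\leq 1$: finite potency is essentially free from the finite-rank observation, and uniqueness is a clean direct-sum argument, but confirming that $\Ker(\f^{\textcircled\#})$ and $\Img(\f^{\textcircled\#})$ genuinely furnish the AST-decomposition for $\f^{\textcircled\#}$ (rather than merely bounding the stable rank) requires care. I would handle this by combining $\f\circ(\f^{\textcircled\#})^2=\f^{\textcircled\#}$ with the already-established surjectivity of $\f^{\textcircled\#}$ onto $\Img(\f^{\textcircled\#})$ to deduce that $(\f^{\textcircled\#})_{\vert\Img(\f^{\textcircled\#})}$ is injective, hence an automorphism of a finite-dimensional space, which together with $\Img(\f^{\textcircled\#})\cap\Ker(\f^{\textcircled\#})=0$ yields the desired decomposition and the index bound via \eqref{eq:index1}.
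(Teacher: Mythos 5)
Your proposal is correct, and two of its three parts coincide with the paper's own proof: finite potency is obtained exactly as in the paper (since $\Img(\f^{\textcircled\#})\subseteq\Img(\f)=W_{\f}$ is finite dimensional, $\f^{\textcircled\#}$ has finite rank), and your uniqueness argument --- $\Img(\g_1-\g_2)\subseteq\Img(\f)\cap\Ker(\f)=\{0\}$ because $i(\f)\leq 1$ --- is literally the argument in the paper. Where you genuinely diverge is the index bound. The paper takes the AST-decomposition $V=W_{\f^{\textcircled\#}}\oplus U_{\f^{\textcircled\#}}$ as given (available once finite potency of $\f^{\textcircled\#}$ is known) and applies the iterated identity \eqref{eq: fcircfnfcircn} at $n=m=i(\f^{\textcircled\#})$: for $u\in U_{\f^{\textcircled\#}}$ one gets $\f^{\textcircled\#}(u)=\f^{m-1}\circ(\f^{\textcircled\#})^m(u)=0$ because $(\f^{\textcircled\#})^m$ vanishes on $U_{\f^{\textcircled\#}}$, whence $U_{\f^{\textcircled\#}}=\Ker(\f^{\textcircled\#})$ and \eqref{eq:index1} gives $i(\f^{\textcircled\#})\leq 1$. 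You instead use only the $n=2$ identity \eqref{eq: fcircffcirc2} to deduce that $\f^{\textcircled\#}$ is injective on its own finite-dimensional image, hence an automorphism of it, and then reconstruct the decomposition $V=\Img(\f^{\textcircled\#})\oplus\Ker(\f^{\textcircled\#})$ by a Fitting-type argument, identifying it afterwards with the AST-decomposition. Both routes are sound. The paper's is shorter because the nilpotency of $\f^{\textcircled\#}$ on $U_{\f^{\textcircled\#}}$ comes for free from the AST theory; yours is more self-contained and avoids invoking the nilpotency order $m$, but it obliges you to verify not only $\Img(\f^{\textcircled\#})\cap\Ker(\f^{\textcircled\#})=\{0\}$ but also the sum $V=\Img(\f^{\textcircled\#})+\Ker(\f^{\textcircled\#})$ --- a routine step (given $v$, solve $\f^{\textcircled\#}(w)=\f^{\textcircled\#}(v)$ with $w$ in the image and subtract), but one that should be stated explicitly rather than folded into the phrase ``yields the desired decomposition.''
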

\begin{proof}
The first statement is a direct consequence of Proposition \ref{P: ConsecuenciasDefinicion} and Definition \ref{D:core-inver-fp93735}, as $W_{\f}=\Img(\f)$ is finite dimensional by definition of AST-decomposition and $\Img(\f^{\textcircled\#})\subseteq \Img(\f).$ Now, the second statement is a consequence of the first statement and the equation showed in \eqref{eq: fcircfnfcircn}. To wit, as $\f^{\textcircled\#}$ is finite potent, let us consider $u\in U_{\f^{\textcircled\#}}.$ Then, if $i(\f^{\textcircled\#})=m,$ it is: $$\f^{\textcircled\#}(u)=\f^{m-1}(\f^{\textcircled\#})^m(u)=0, $$ (recall, by definition of AST-decomposition, $(\f^{\textcircled\#})^m_{_\vert U_{\f^{\textcircled\#}}}=0$). Hence, $$\Ker(\f^{\textcircled\#})=U_{\f^{\textcircled\#}} $$ (as the other inclusion is always true for any finite potent endomorphism) and by \eqref{eq:index1} we conclude that $i(\f^{\textcircled\#})\leq 1.$\\In order to see the uniqueness, let us suppose that there exist some $\f_1^{\textcircled\#}, \f_2^{\textcircled\#}$ satisfying Definition \ref{D:core-inver-fp93735}. Then: $$\f\circ\f_1^{\textcircled\#}=\f\circ\f^{\dagger}=\f\circ\f_2^{\textcircled\#} $$ with $$\Img(\f_1^{\textcircled\#})\subseteq \Img(\f) \text{ and } \Img(\f_2^{\textcircled\#})\subseteq \Img(\f).$$ Hence, \begin{align*}
\Img(\f_1^{\textcircled\#}-\f_2^{\textcircled\#})& \subseteq \Ker(\f); \\
\Img(\f_1^{\textcircled\#}-\f_2^{\textcircled\#})& \subseteq \Img(\f);
\end{align*} 
so it is $$\Img(\f_1^{\textcircled\#}-\f_2^{\textcircled\#}) \subseteq \Ker(\f)\cap\Img(\f)=\{0\};$$ as $i(\f)\leq 1$ so we conclude that $\f_1^{\textcircled\#}=\f_2^{\textcircled\#}.$
\end{proof}

Let us now check the equivalence between Definition \ref{D:core-inver-fp93735} and the restriction to index $1$ of the definition of the left-Drazin-Moore-Penrose inverse (\cite[Theorem 3.2]{FpDMP}). In fact, this result is the generalization to arbitrary vector spaces of $i)$ of \cite[Theorem 1]{BakTren}. This will give a purely algebraic definition of the Core-Inverse of a finite potent endomorphism.

\begin{thm}\textbf{(Algebraic characterization of the Core Inverse.)}\label{T: AlgDefCore-Inver}
Given a finite potent endomorphism $\varphi \in \ed_k (V)$ with $i(\f)\leq 1$, then: $$\f^{\textcircled\#}=\f^{\#}\circ \f \circ \f^{\dagger}.$$ 
\end{thm}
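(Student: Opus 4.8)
The plan is to verify that the linear map $\f^{\#}\circ \f \circ \f^{\dagger}$ satisfies the two defining conditions of the Core Inverse in Definition \ref{D:core-inver-fp93735}; by the uniqueness established in Corollary \ref{C: fcircledfpdeind1}, this suffices to conclude $\f^{\textcircled\#}=\f^{\#}\circ \f \circ \f^{\dagger}$. So I set $X = \f^{\#}\circ \f \circ \f^{\dagger}$ and check, first, that $\f \circ X = P_{\Img(\f)}$, and second, that $\Img(X)\subseteq \Img(\f)$. Everything hinges on the two standard identities available here: since $i(\f)\leq 1$ we have $\Img(\f)=W_{\f}$ and $\Ker(\f)=U_{\f}$ by \eqref{eq:index1}, and the group inverse satisfies $\f\circ \f^{\#}\circ \f=\f$, $\f^{\#}\circ \f=\f\circ \f^{\#}$; the Moore-Penrose inverse satisfies $\f\circ \f^{\dagger}=P_{\Img(\f)}$ (recorded in Section \ref{ss: Moore-Penrose}). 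Note that Lemma \ref{R: FPileq1isAdmissible} guarantees $\f$ is admissible, so $\f^{\dagger}$ exists and these identities are legitimate.

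For the first condition I would compute
\begin{equation*}
\f\circ X=\f\circ \f^{\#}\circ \f \circ \f^{\dagger}=(\f\circ \f^{\#}\circ \f)\circ \f^{\dagger}=\f\circ \f^{\dagger}=P_{\Img(\f)},
\end{equation*}
using associativity to isolate $\f\circ \f^{\#}\circ \f=\f$ and then the Moore-Penrose property $\f\circ \f^{\dagger}=P_{\Img(\f)}$. This is the clean, immediate part. For the second condition I observe that $X=\f^{\#}\circ(\f\circ \f^{\dagger})$, so $\Img(X)\subseteq \Img(\f^{\#})$; and because $i(\f)\leq 1$ the group inverse shares the range of $\f$, i.e. $\Img(\f^{\#})=W_{\f}=\Img(\f)$. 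Hence $\Img(X)\subseteq \Img(\f)$, which is exactly what is required.

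The one step deserving genuine care—and the likely main obstacle—is the claim $\Img(\f^{\#})=\Img(\f)$. I would justify it through the AST-decomposition: when $i(\f)\leq 1$ both $\f$ and $\f^{\#}$ act as the zero map on $U_{\f}=\Ker(\f)$ and as mutually inverse isomorphisms on the finite-dimensional invariant subspace $W_{\f}=\Img(\f)$, so both have range precisely $W_{\f}$. This follows from the construction of the group inverse together with the fact that $\f^{\#}$ is itself finite potent of index $\leq 1$ sharing the AST-decomposition of $\f$ (the relations $\f^{\#}\circ \f=\f\circ \f^{\#}$ and $\f\circ \f^{\#}\circ \f=\f$ force $\f^{\#}$ to vanish on $\Ker(\f)$ and to invert $\f$ on $W_{\f}$). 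Once this identification of ranges is secured, both defining conditions hold and the uniqueness in Corollary \ref{C: fcircledfpdeind1} finishes the proof.
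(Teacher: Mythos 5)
Your proposal is correct, but its logical organization differs from the paper's. The paper proves both implications directly: first, assuming a map satisfies Definition \ref{D:core-inver-fp93735}, it analyzes restrictions to the two summands of $V=\Img(\f)\oplus[\Img(\f)]^{\perp}$ to show that any such map and $\f^{\#}\circ\f\circ\f^{\dagger}$ both act as $(\f_{\vert_{\Img(\f)}})^{-1}$ on $\Img(\f)$ and as $0$ on $[\Img(\f)]^{\perp}$; second, it verifies the converse by exactly the computation you give, namely $\f\circ\f^{\#}\circ\f\circ\f^{\dagger}=\f\circ\f^{\dagger}$ and $\Img(\f^{\#}\circ\f\circ\f^{\dagger})\subseteq\Img(\f^{\#})=W_{\f}=\Img(\f)$. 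You keep only this verification half and replace the forward derivation by an appeal to the uniqueness of Corollary \ref{C: fcircledfpdeind1}. That is legitimate (the corollary precedes the theorem and its proof is independent of it, so there is no circularity) and it is arguably more economical: your argument simultaneously establishes the \emph{existence} of $\f^{\textcircled\#}$ whenever $i(\f)\leq 1$, a fact the paper only records later in Theorem \ref{T: CoreExistsIleq1}. What the paper's longer route buys is the explicit description of $\f^{\textcircled\#}$ on each summand, which it reuses immediately in Theorem \ref{T: GeoCaractCoreInv}. One small imprecision on your side: the parenthetical claim that $\f\circ\f^{\#}\circ\f=\f$ together with commutativity forces $\f^{\#}$ to vanish on $\Ker(\f)$ is not quite right --- take $\f=0$, for which any map satisfies those two identities; one also needs the reflexivity axiom $\f^{\#}\circ\f\circ\f^{\#}=\f^{\#}$, which gives $\f^{\#}(v)=\f^{\#}(\f^{\#}(\f(v)))=0$ for $v\in\Ker(\f)$. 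This does not damage the proof, since $\Img(\f^{\#})=W_{\f}=\Img(\f)$ is a standard property of the group inverse when $i(\f)\leq 1$ and is precisely the identity the paper itself invokes at the same point.
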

\begin{proof}
Firstly, let us proof that Definition \ref{D:core-inver-fp93735} implies the above expression of $\f^{\textcircled\#}.$ Considering the decomposition induced in $V=\Img(\f)\oplus[\Img(\f)]^{\perp},$ one has that: 
$$(\f^{\#}\circ\f\circ\f^{\dagger})_{\vert_{\Img(\f)}}=(\f^{\#})_{\vert_{\Img(\f)}}=(\f_{\vert_{\Img(\f)}})^{-1}.$$
From the equality, $\f\circ\f^{\textcircled\#}=\f\circ\f^{\dagger},$ one deduces that: $$(\f\circ\f^{\textcircled\#})_{\vert_{\Img(\f)}}=(\f\circ\f^{\dagger})_{\vert_{\Img(\f)}}=Id_{\vert_{\Img(\f)}}, $$ and therefore, as $i(\f)\leq 1$: $$(\f^{\textcircled\#})_{\vert_{\Img(\f)}}=(\f_{\vert_{\Img(\f)}})^{-1}.$$ Hence, $$(\f^{\textcircled\#})_{\vert_{\Img(\f)}}=(\f^{\#}\circ\f\circ\f^{\dagger})_{\vert_{\Img(\f)}}.$$
On the other hand, by the expression of the Moore-Penrose inverse: $$(\f\circ\f^{\textcircled \#})_{\vert_{[\Img(\f)]^{\perp}}}=(\f\circ\f^{\dagger})_{\vert_{[\Img(\f)]^{\perp}}}=0,$$ so $(\f^{\textcircled\#})_{\vert_{[\Img(\f)]^{\perp}}}\in \Ker(\f).$ Therefore, $$(\f^{\textcircled\#})_{\vert_{[\Img(\f)]^{\perp}}}=0 $$ because $\Img(\f^{\textcircled\#})\subseteq \Img(\f)$ and $\Img(\f)\cap\Ker(\f)=\{0\}$ as $i(\f)\leq 1.$ Directly, $$(\f^{\#}\circ\f\circ\f^{\dagger})_{\vert_{[\Img(\f)]^{\perp}}}=0.$$ \medskip Conversely, if $\f^{\textcircled\#}=\f^{\#}\circ\f\circ\f^{\dagger},$ it is straightforward that $$\f\circ\f^{\textcircled\#}=\f\circ(\f^{\#}\circ\f\circ\f^{\dagger})=\f\circ\f^{\dagger}$$ and it is clear that $\Img(\f^{\textcircled\#})=\Img(\f^{\#}\circ\f\circ\f^{\dagger})\subseteq\Img(\f^{\#})=W_{\f}=\Img(\f)$ as $i(\f)\leq 1,$ so the statement is proven.
\end{proof}

\begin{thm}\textbf{(Geometric characterization of the Core Inverse.)}\label{T: GeoCaractCoreInv}
Given a finite potent endomorphism $\varphi \in \ed_k (V)$ with $i(\f)\leq 1$, then $\f^{\textcircled\#}$ is characterized by: $$\f^{\textcircled\#}(v) = \left \{ \begin{array}{ccl} (\f_{\vert_{\Img(\f)}})^{-1}(v) & \text{ if } & v\in \Img(\f) \smallskip\\  0 & \text{ if } & v\in [\Img(\f)]^{\perp} \end{array} \right . ,$$
\end{thm}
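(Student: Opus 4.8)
The plan is to verify directly that the operator defined by the right-hand side satisfies the two conditions of Definition \ref{D:core-inver-fp93735}, and then to appeal to the uniqueness established in Corollary \ref{C: fcircledfpdeind1}.

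First I would fix the ambient decomposition. Since $i(\f) \leq 1$, the equivalences in \eqref{eq:index1} give $W_\f = \Img(\f)$ and $U_\f = \Ker(\f)$; in particular $\Img(\f)$ is the finite-dimensional piece $W_\f$ of the AST-decomposition, so by Lemma \ref{R: FPileq1isAdmissible} it is closed and $V = \Img(\f) \oplus [\Img(\f)]^{\perp}$. Because $W_\f = \Img(\f)$ is $\f$-invariant and $\f_{\vert_{W_\f}}$ is an isomorphism, the symbol $(\f_{\vert_{\Img(\f)}})^{-1}$ is meaningful as an automorphism of $\Img(\f)$; this is what makes the right-hand side a well-defined linear map $\g \in \ed_k(V)$, defined block-wise on the direct sum and extended by linearity.

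Next I would check the two defining conditions. For the condition $\f \circ \g = P_{\Img(\f)}$: on $v \in \Img(\f)$ we have $\g(v) = (\f_{\vert_{\Img(\f)}})^{-1}(v) \in \Img(\f)$, whence $\f(\g(v)) = v = P_{\Img(\f)}(v)$; on $v \in [\Img(\f)]^{\perp}$ we have $\g(v) = 0$, so $\f(\g(v)) = 0 = P_{\Img(\f)}(v)$. Since both sides agree on each summand of $V = \Img(\f) \oplus [\Img(\f)]^{\perp}$, they coincide. For the condition $\Img(\g) \subseteq \Img(\f)$: the image of the first block lies in $\Img(\f)$ by construction and the second block is zero, so $\Img(\g) \subseteq \Img(\f)$.

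Having shown that $\g$ meets both conditions of Definition \ref{D:core-inver-fp93735}, uniqueness (Corollary \ref{C: fcircledfpdeind1}) forces $\g = \f^{\textcircled\#}$, which is exactly the claimed formula. I do not anticipate a genuine obstacle: the only point requiring care is the justification that $(\f_{\vert_{\Img(\f)}})^{-1}$ exists, which is precisely the content of $W_\f = \Img(\f)$ under $i(\f) \leq 1$, and everything else is a routine block computation. Alternatively, one can read the same formula straight off the algebraic characterization of Theorem \ref{T: AlgDefCore-Inver}, since evaluating $\f^{\#} \circ \f \circ \f^{\dagger}$ on the two summands, using $\f \circ \f^{\dagger} = P_{\Img(\f)}$ together with $(\f^{\#})_{\vert_{\Img(\f)}} = (\f_{\vert_{\Img(\f)}})^{-1}$, reproduces the right-hand side.
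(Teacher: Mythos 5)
Your proof is correct, and it overlaps substantially with the paper's, but the logical organization differs. The paper first \emph{derives} the formula by evaluating the algebraic expression $\f^{\textcircled\#}=\f^{\#}\circ\f\circ\f^{\dagger}$ of Theorem \ref{T: AlgDefCore-Inver} on the two summands of $V=\Img(\f)\oplus[\Img(\f)]^{\perp}$ (on $\Img(\f)$ it reduces to $\f^{\#}$, whose restriction is $(\f_{\vert_{\Img(\f)}})^{-1}$; on the orthogonal complement it vanishes), and only then checks, as a converse, that any operator $\tilde{\f}$ given by the displayed formula satisfies Definition \ref{D:core-inver-fp93735}. Your main route keeps only that second, converse half --- the block-wise verification that the candidate $\g$ satisfies $\f\circ\g=P_{\Img(\f)}$ and $\Img(\g)\subseteq\Img(\f)$ --- and closes the argument with the uniqueness of Corollary \ref{C: fcircledfpdeind1} instead of the algebraic characterization. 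What your organization buys: it is more self-contained, bypassing the group inverse and the Moore--Penrose inverse entirely (you never touch $\f^{\dagger}$, so beyond the formal requirement of admissibility in Definition \ref{D:core-inver-fp93735} your computations only use the decomposition $V=\Img(\f)\oplus[\Img(\f)]^{\perp}$ supplied by Lemma \ref{R: FPileq1isAdmissible}), and as a by-product it establishes the \emph{existence} of $\f^{\textcircled\#}$ whenever $i(\f)\leq 1$, which the paper records separately as Theorem \ref{T: CoreExistsIleq1} precisely by recycling the converse half of this proof. What the paper's route buys: it exhibits the geometric formula as an immediate consequence of the already-proved algebraic one, so the two characterizations are visibly two readings of the same operator; this is exactly the alternative you sketch in your closing sentence.
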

\begin{proof}
As in this hypothesis $\Img(\f)$ is closed (Lemma \ref{R: FPileq1isAdmissible}), $V=\Img(\f)\oplus[\Img(\f)]^{\perp}.$  If $v\in \Img(\f),$ then: $$\f^{\textcircled\#}(v)=\f^{\#}\f\f^{\dagger}(v)=\f^{\#}(v)=(\f_{\vert_{\Img(\f)}})^{-1}(v), $$ which makes sense because $\f_{\vert_{\Img(\f)}}=\f_{\vert_{W_{\f}}}$ is an automorphism as $i(\f)\leq 1.$ On the other hand, if $\bar{v}\in [\Img(\f)]^{\perp}$ then: $$\f^{\textcircled\#}(\bar{v})=\f^{\#}\f\f^{\dagger}(\bar{v})=0. $$ Now let us suppose that there exists $\tilde{\f}\in \ed_k(V)$ such that: 
$$\tilde{\f}(v) = \left \{ \begin{array}{ccl} (\f_{\vert_{\Img(\f)}})^{-1}(v) & \text{ if } & v\in \Img(\f) \smallskip\\  0 & \text{ if } & v\in [\Img(\f)]^{\perp} \end{array} \right . .$$ Notice that if $v\in \Img(\f)$ then: $$\f\tilde{\f}(v)=v=\f\f^{\dagger}(v). $$ If $\bar{v}\in [\Img(\f)]^{\perp}$ it is: $$\f\tilde{\f}(\bar{v})=0=\f\f^{\dagger}(\bar{v}). $$ Therefore, $$\f\circ\tilde{\f}=\f\circ\f^{\dagger}.$$ Directly from the expression of $\tilde{\f}$ one deduces that $\Img(\tilde{\f})=\Img(\f)$ and therefore Definition \ref{D:core-inver-fp93735} is satisfied.
\end{proof}

\begin{thm}\label{T: CoreExistsIleq1}
Let us consider a finite potent endomorphism $\f \in \ed_k(V).$ The Core Inverse of $\f$ exists if and only if $i(\f)\leq 1.$
\end{thm}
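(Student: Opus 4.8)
The plan is to prove the biconditional by splitting it into its two implications, both of which are already essentially available from the preceding results.

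For the forward direction, suppose the Core Inverse $\f^{\textcircled\#}$ exists. This is precisely the content of Proposition \ref{P: ConsecuenciasDefinicion}, which states that whenever $\f^{\textcircled\#}$ exists the endomorphism satisfies $i(\f)\leq 1$. So this direction requires no new argument beyond citing that proposition.

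For the converse, suppose $\f\in\ed_k(V)$ is a finite potent endomorphism with $i(\f)\leq 1$. I would exhibit an explicit candidate and verify it satisfies Definition \ref{D:core-inver-fp93735}. By Lemma \ref{R: FPileq1isAdmissible}, the hypothesis $i(\f)\leq 1$ guarantees that $\Img(\f)$ is closed, so $V=\Img(\f)\oplus[\Img(\f)]^{\perp}$ and $\f$ is admissible for the Moore-Penrose inverse; hence both $\f^{\dagger}$ and $\f^{\#}$ exist (the latter because $i(\f)\leq 1$ yields a group inverse). I would then define the candidate $\f^{\textcircled\#}:=\f^{\#}\circ\f\circ\f^{\dagger}$ and check the two defining conditions directly: the computation $\f\circ(\f^{\#}\circ\f\circ\f^{\dagger})=\f\circ\f^{\dagger}=P_{\Img(\f)}$ gives the first condition, and the inclusion $\Img(\f^{\#}\circ\f\circ\f^{\dagger})\subseteq\Img(\f^{\#})=W_{\f}=\Img(\f)$ (using \eqref{eq:index1}) gives the second. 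This is exactly the content already worked out in the converse half of Theorem \ref{T: AlgDefCore-Inver}, so I would invoke that theorem rather than reproduce the verification.

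There is essentially no obstacle here: the theorem is a clean repackaging of Proposition \ref{P: ConsecuenciasDefinicion} and Theorem \ref{T: AlgDefCore-Inver}, both proved above. The only point demanding a word of care is making sure that the existence hypotheses needed to even write down $\f^{\#}\circ\f\circ\f^{\dagger}$ are in force under $i(\f)\leq 1$; this is where Lemma \ref{R: FPileq1isAdmissible} (closedness, hence admissibility, hence existence of $\f^{\dagger}$) and the standing fact $\f^{D}=\f^{\#}$ for index one (existence of $\f^{\#}$) do the work. Thus the proof reduces to two sentences, one for each implication, each citing the appropriate earlier result.
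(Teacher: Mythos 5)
Your proof is correct, and its skeleton matches the paper's: the forward implication is, in both cases, nothing more than a citation of Proposition \ref{P: ConsecuenciasDefinicion}. The difference is in the converse. The paper never touches the algebraic formula there: after observing (as you do) that $i(\f)\leq 1$ forces $\Img(\f)=W_{\f}$ to be finite dimensional, hence closed, hence $V=\Img(\f)\oplus[\Img(\f)]^{\perp}$, it invokes the converse half of the \emph{geometric} characterization (Theorem \ref{T: GeoCaractCoreInv}), i.e.\ the piecewise candidate equal to $(\f_{\vert_{\Img(\f)}})^{-1}$ on $\Img(\f)$ and $0$ on $[\Img(\f)]^{\perp}$. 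You instead invoke the converse half of the \emph{algebraic} characterization (Theorem \ref{T: AlgDefCore-Inver}), taking $\f^{\#}\circ\f\circ\f^{\dagger}$ as the candidate; since that half of the paper's proof really is a verification that this composition satisfies Definition \ref{D:core-inver-fp93735}, your citation is legitimate and the two routes are equally valid. The trade-off is that your candidate cannot even be written down until both $\f^{\#}$ and $\f^{\dagger}$ are known to exist, so the care you take about existence (the group inverse via $\f^{D}=\f^{\#}$, the Moore--Penrose inverse via admissibility) is genuinely needed, whereas the geometric candidate is defined outright from the decomposition $V=\Img(\f)\oplus[\Img(\f)]^{\perp}$ and its first defining condition $\f\circ\tilde{\f}=P_{\Img(\f)}$ could in principle be checked without ever mentioning $\f^{\dagger}$, making the paper's route marginally more economical. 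Finally, note that both arguments inherit the same implicit point from Lemma \ref{R: FPileq1isAdmissible}: closedness of $\Img(\f)$ gives the image decomposition, but admissibility in the sense of Definition \ref{D: Admissible} (which Definition \ref{D:core-inver-fp93735} formally presupposes, and which the existence of $\f^{\dagger}$ requires) also involves $V=\Ker(\f)\oplus[\Ker(\f)]^{\perp}$; neither your write-up nor the paper's addresses that half, so on this point you are exactly as exposed as the original.
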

\begin{proof}
Let us suppose that the Core Inverse of $\f$ exists. Then, it was already proven in Proposition \ref{P: ConsecuenciasDefinicion} that $i(\f)\leq 1.$ Conversely, if $i(\f)\leq 1$ then $V=W_{\f}\oplus U_{\f}=\Img(\f)\oplus \Ker(\f)$ and the $\Img(\f)$ is a finite dimensional vector subspace and hence it is a closed subspace of $V.$ In this conditions $V=\Img(\f)\oplus [\Img(\f)]^{\perp}.$ Therefore, we can now copy the reasoning presented in proving the converse of Theorem \ref{T: GeoCaractCoreInv} for proving that $\f^{\textcircled\#}$ exists.
\end{proof}

\begin{cor}\label{C: MPdela CoreInv}
Given a finite potent endomorphism $\varphi \in \ed_k (V)$ with $i(\f)\leq 1$, then $\f^{\textcircled\#}$ is also admissible for the Moore-Penrose inverse. Moreover, $$(\f^{\textcircled\#})^{\dagger}(v) = \left \{ \begin{array}{ccl} \f(v) & \text{ if } & v\in\Img(\f)\smallskip \\  0 & \text{ if } & v\in [\Img(\f)]^{\perp}\end{array} \right . .$$
\end{cor}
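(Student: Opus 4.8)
The plan is to extract everything from the geometric characterization of $\f^{\textcircled\#}$ obtained in Theorem \ref{T: GeoCaractCoreInv}, which already exhibits $\f^{\textcircled\#}$ as the map that inverts $\f$ on $\Img(\f)$ and annihilates $[\Img(\f)]^{\perp}$. First I would read off from this formula the two subspaces that govern the Moore-Penrose inverse: since $(\f_{\vert_{\Img(\f)}})^{-1}$ is an automorphism of $\Img(\f)$ and $\f^{\textcircled\#}$ vanishes on $[\Img(\f)]^{\perp}$, one gets $\Img(\f^{\textcircled\#})=\Img(\f)$ and $\Ker(\f^{\textcircled\#})=[\Img(\f)]^{\perp}$.

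Second, I would verify that $\f^{\textcircled\#}$ is admissible in the sense of Definition \ref{D: Admissible}. The image side is immediate: $\Img(\f^{\textcircled\#})=\Img(\f)=W_{\f}$ is finite dimensional, hence closed, so $V=\Img(\f^{\textcircled\#})\oplus[\Img(\f^{\textcircled\#})]^{\perp}$. For the kernel side the point is that $\Ker(\f^{\textcircled\#})=[\Img(\f)]^{\perp}$, so its orthogonal complement is the double complement $([\Img(\f)]^{\perp})^{\perp}$; because $\Img(\f)$ is finite dimensional the decomposition $V=\Img(\f)\oplus[\Img(\f)]^{\perp}$ holds and a standard argument gives $([\Img(\f)]^{\perp})^{\perp}=\Img(\f)$, whence $V=\Ker(\f^{\textcircled\#})\oplus[\Ker(\f^{\textcircled\#})]^{\perp}$. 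Thus $\f^{\textcircled\#}$ is admissible. (Alternatively, $\f^{\textcircled\#}$ is finite potent with $i(\f^{\textcircled\#})\leq 1$ by Corollary \ref{C: fcircledfpdeind1}, which via Lemma \ref{R: FPileq1isAdmissible} already yields closedness of $\Img(\f^{\textcircled\#})$; but the kernel decomposition still has to be argued, and this is where finite-dimensionality of $\Img(\f)$ does the work.)

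Finally, I would compute $(\f^{\textcircled\#})^{\dagger}$ by substituting into the explicit Moore-Penrose formula recalled in Subsection \ref{ss: Moore-Penrose}, namely $\g^{\dagger}(h)=(\g_{\vert_{[\Ker \g]^{\perp}}})^{-1}(h)$ on $\Img(\g)$ and $\g^{\dagger}(h)=0$ on $[\Img(\g)]^{\perp}$, applied to $\g=\f^{\textcircled\#}$. Here $[\Ker(\f^{\textcircled\#})]^{\perp}=\Img(\f)$ and $\Img(\f^{\textcircled\#})=\Img(\f)$, so for $v\in\Img(\f)$ one has $(\f^{\textcircled\#})^{\dagger}(v)=((\f^{\textcircled\#})_{\vert_{\Img(\f)}})^{-1}(v)$; since $(\f^{\textcircled\#})_{\vert_{\Img(\f)}}=(\f_{\vert_{\Img(\f)}})^{-1}$, its inverse is $\f_{\vert_{\Img(\f)}}$, giving $(\f^{\textcircled\#})^{\dagger}(v)=\f(v)$, while on $[\Img(\f^{\textcircled\#})]^{\perp}=[\Img(\f)]^{\perp}$ the formula returns $0$. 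This is exactly the asserted expression. The only genuinely delicate point is the admissibility verification, in particular recovering $\Img(\f)$ as the double orthogonal complement of $\Ker(\f^{\textcircled\#})$, and it rests entirely on $\Img(\f)$ being finite dimensional; the remaining steps are a direct substitution into the known geometric formulas.
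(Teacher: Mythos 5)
Your proof is correct and follows essentially the same route as the paper's: identify $\Img(\f^{\textcircled\#})=\Img(\f)$ and $\Ker(\f^{\textcircled\#})=[\Img(\f)]^{\perp}$, then substitute into the explicit Moore-Penrose formula using $(\f^{\textcircled\#})_{\vert_{\Img(\f)}}=(\f_{\vert_{\Img(\f)}})^{-1}$ from Theorem \ref{T: GeoCaractCoreInv}. The only differences are cosmetic: the paper reads the two subspace identities off the algebraic expression $\f^{\textcircled\#}=\f^{\#}\circ\f\circ\f^{\dagger}$ rather than off the geometric formula, and you make explicit the kernel-side decomposition $V=\Ker(\f^{\textcircled\#})\oplus[\Ker(\f^{\textcircled\#})]^{\perp}$ via the double orthogonal complement $([\Img(\f)]^{\perp})^{\perp}=\Img(\f)$, a point the paper leaves implicit.
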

\begin{proof}
As $\f$ is admissible for the Moore-Penrose inverse, then $V=\Img(\f)\oplus[\Img(\f)]^{\perp}.$ Clearly, as $\Img(\f^{\textcircled\#})=\Img(\f^{\#}\circ\f\circ\f^{\dagger})=\Img(\f),$ one deduces that $$V=\Img(\f^{\textcircled\#})\oplus[\Img(\f)^{\textcircled\#}]^{\perp}=\Img(\f)\oplus[\Img(\f)]^{\perp}.$$ For the second statement, bearing in mind that $\Ker(\f^{\textcircled\#})=\Ker(\f^{\#}\circ\f\circ\f^{\dagger})=[\Img(\f)]^{\perp},$ by the characterization of the Moore-Penrose inverse: $$(\f^{\textcircled\#})^{\dagger}(v) = \left \{ \begin{array}{ccl} ((\f^{\textcircled\#})_{\vert_{[\Ker(\f^{\textcircled\#})]^{\perp}}})^{-1}(v) & \text{ if } & v\in\Img(\f^{\textcircled\#}) \\  0 & \text{ if } & v\in [\Img(\f^{\textcircled\#})]^{\perp}\end{array} \right . ;$$ where by the mentioned relations, can be rewritten into:
$$(\f^{\textcircled\#})^{\dagger}(v)= \left \{ \begin{array}{ccl} ((\f^{\textcircled\#})_{\vert_{[\Img(\f^{\textcircled\#})]}})^{-1}(v) & \text{ if } & v\in\Img(\f) \\  0 & \text{ if } & v\in [\Img(\f)]^{\perp}\end{array} \right . ; $$
and finally, as $(\f^{\textcircled\#})_{_{\vert\Img(\f)}}=(\f_{_{\vert\Img(\f)}})^{-1}$ as it was stated in Theorem \ref{T: GeoCaractCoreInv}, one reaches that:

$$(\f^{\textcircled\#})^{\dagger}(v)= \left \{ \begin{array}{ccl} \f(v) & \text{ if } & v\in\Img(\f) \smallskip\\  0 & \text{ if } & v\in [\Img(\f)]^{\perp}\end{array} \right . .$$  
\end{proof}

\begin{cor}\label{C: CoreofCore}
Given a finite potent endomorphism $\varphi \in \ed_k (V)$ with $i(\f)\leq 1$, then $$(\f^{\textcircled\#})^{\textcircled\#}=(\f^{\textcircled\#})^{\dagger}.$$
\end{cor}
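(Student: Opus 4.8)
The plan is to compute $(\f^{\textcircled\#})^{\textcircled\#}$ explicitly by means of the geometric characterization of the Core Inverse (Theorem \ref{T: GeoCaractCoreInv}) and then to recognize the resulting operator as the Moore-Penrose inverse $(\f^{\textcircled\#})^{\dagger}$ already computed in Corollary \ref{C: MPdela CoreInv}. To apply Theorem \ref{T: GeoCaractCoreInv} to $\f^{\textcircled\#}$ I first need to know that its Core Inverse exists: by Corollary \ref{C: fcircledfpdeind1} the operator $\f^{\textcircled\#}$ is a finite potent endomorphism with $i(\f^{\textcircled\#})\leq 1$, and by Corollary \ref{C: MPdela CoreInv} it is admissible for the Moore-Penrose inverse, so Theorem \ref{T: CoreExistsIleq1} guarantees that $(\f^{\textcircled\#})^{\textcircled\#}$ exists.

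Next I would apply the geometric characterization to $\f^{\textcircled\#}$. Recalling that $\Img(\f^{\textcircled\#})=\Img(\f)$ (as shown in the proof of Theorem \ref{T: AlgDefCore-Inver}), and hence $[\Img(\f^{\textcircled\#})]^{\perp}=[\Img(\f)]^{\perp}$, Theorem \ref{T: GeoCaractCoreInv} applied to $\f^{\textcircled\#}$ on the decomposition $V=\Img(\f)\oplus[\Img(\f)]^{\perp}$ yields
$$(\f^{\textcircled\#})^{\textcircled\#}(v) = \left\{ \begin{array}{ccl} ((\f^{\textcircled\#})_{\vert_{\Img(\f)}})^{-1}(v) & \text{ if } & v \in \Img(\f) \\ 0 & \text{ if } & v \in [\Img(\f)]^{\perp} \end{array} \right. .$$
The key simplification is that, by Theorem \ref{T: GeoCaractCoreInv} applied to $\f$ itself, the restriction $(\f^{\textcircled\#})_{\vert_{\Img(\f)}}$ equals $(\f_{\vert_{\Img(\f)}})^{-1}$, which is an automorphism of the finite-dimensional space $\Img(\f)=W_{\f}$; consequently its inverse is $\f_{\vert_{\Img(\f)}}$, so that $(\f^{\textcircled\#})^{\textcircled\#}(v)=\f(v)$ for $v\in\Img(\f)$ and $(\f^{\textcircled\#})^{\textcircled\#}(v)=0$ for $v\in[\Img(\f)]^{\perp}$.

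Finally I would compare this formula with the explicit description of $(\f^{\textcircled\#})^{\dagger}$ obtained in Corollary \ref{C: MPdela CoreInv}, which is exactly the same piecewise expression on $V=\Img(\f)\oplus[\Img(\f)]^{\perp}$. Since two linear maps agreeing on both direct summands of $V$ coincide, this establishes $(\f^{\textcircled\#})^{\textcircled\#}=(\f^{\textcircled\#})^{\dagger}$.

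There is essentially no deep obstacle here; the argument is a matching of two previously derived explicit formulas. The only point requiring care is the inversion step: one must use that $(\f^{\textcircled\#})_{\vert_{\Img(\f)}}=(\f_{\vert_{\Img(\f)}})^{-1}$ and therefore that taking the inverse of this restriction returns $\f_{\vert_{\Img(\f)}}$, together with the fact that the relevant orthogonal decomposition of $V$ is the same one, namely $\Img(\f)\oplus[\Img(\f)]^{\perp}$, for both $\f$ and $\f^{\textcircled\#}$; this is precisely what guarantees that the two pieces line up.
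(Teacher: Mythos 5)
Your proposal is correct and follows essentially the same route as the paper's own proof: both apply the geometric characterization of Theorem \ref{T: GeoCaractCoreInv} to $\f^{\textcircled\#}$, use $\Img(\f^{\textcircled\#})=\Img(\f)$ and $(\f^{\textcircled\#})_{\vert_{\Img(\f)}}=(\f_{\vert_{\Img(\f)}})^{-1}$ to simplify the piecewise formula, and conclude by matching it with the expression for $(\f^{\textcircled\#})^{\dagger}$ from Corollary \ref{C: MPdela CoreInv}. Your extra remarks on existence via Corollary \ref{C: fcircledfpdeind1} and Theorem \ref{T: CoreExistsIleq1} only make explicit what the paper states in its opening sentence.
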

\begin{proof}
Firstly we shall point out that this statement has sense in virtue of Corollary \ref{C: fcircledfpdeind1}, this is, $i(\f^{\textcircled\#})\leq 1.$\\
For any $v\in V,$ by Theorem \ref{T: GeoCaractCoreInv}: $$(\f^{\textcircled\#})^{\textcircled\#}(v) = \left \{ \begin{array}{ccl} ((\f^{\textcircled\#})_{\vert_{\Img(\f^{\textcircled\#})}})^{-1}(v) & \text{ if } & v\in\Img(\f^{\textcircled\#})\smallskip \\  0 & \text{ if } & v\in [\Img(\f^{\textcircled\#})]^{\perp}\end{array} \right . ,$$ as $\Img(\f^{\textcircled\#})=\Img(\f)$ and bearing in mind that $$(\f^{\textcircled\#})_{\vert_{\Img(\f)}}=(\f_{\vert_{\Img(\f)}})^{-1}; $$ we can rewrite the above expression as:  $$(\f^{\textcircled\#})^{\textcircled\#}(v) = \left \{ \begin{array}{ccl} ((\f_{\vert_{\Img(\f)}})^{-1})^{-1}(v) & \text{ if } & v\in\Img(\f)\smallskip \\  0 & \text{ if } & v\in [\Img(\f)]^{\perp}  \end{array} \right . .$$ Hence, we conclude by Corollary \ref{C: MPdela CoreInv}.
\end{proof}

\begin{rem}\label{R: fcorecore fcoredagger}
Notice that the previous propositions can be abbreviated into: \begin{equation}\label{eq: MPdelaCreInvAlg}
(\f^{\textcircled\#})^{\textcircled\#}=(\f^{\textcircled\#})^{\dagger}=\f\circ P_{\Img(\f)},
\end{equation}
which is coherent with $iii)$ in \cite[Theorem 1]{BakTren}. Moreover, we shall highlight that $(\f^{\textcircled\#})^{\dagger}$ is a finite potent endomorphism, in general, the Moore-Penrose inverse of a finite potent endomorphism is not a finite potent endomorphism. This is deduced directly from the fact that: $$((\f^{\textcircled\#})^{\dagger})^n=(\f\circ P_{\Img(\f)})^n=\f^n\circ P_{\Img(\f)}.$$
\end{rem}

\begin{cor}\label{L: CoreisEP}
Given a finite potent endomorphism $\varphi \in \ed_k (V)$ with $i(\f)\leq 1$, then $\f^{\textcircled\#}$ is EP. Moreover, $(\f^{\textcircled\#})^{\dagger}$ is also EP.
\end{cor}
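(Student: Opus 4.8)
The plan is to reduce the EP property to a bookkeeping statement about ranges and kernels, all of which have already been computed in the preceding results. Recall that a finite potent endomorphism $\psi$ of index $\leq 1$ admissible for the Moore-Penrose inverse is EP precisely when $\Img(\psi)=\Img(\psi^*)$, equivalently $\Ker(\psi)=\Ker(\psi^*)$, equivalently $\psi^{\#}=\psi^{\dagger}$. I will verify the kernel version, using the elementary fact that whenever $\psi$ has closed range one has $\Ker(\psi^*)=[\Img(\psi)]^{\perp}$; since here every operator in sight has finite-dimensional (hence closed) range, this identity is available.

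For $\f^{\textcircled\#}$, the proof of Corollary \ref{C: MPdela CoreInv} already records that $\Img(\f^{\textcircled\#})=\Img(\f)$ and $\Ker(\f^{\textcircled\#})=[\Img(\f)]^{\perp}$. Combining these with $\Ker((\f^{\textcircled\#})^*)=[\Img(\f^{\textcircled\#})]^{\perp}=[\Img(\f)]^{\perp}$ gives $\Ker(\f^{\textcircled\#})=\Ker((\f^{\textcircled\#})^*)$, which is exactly the EP condition. For $(\f^{\textcircled\#})^{\dagger}$, I will read off its range and kernel from the explicit formula of Corollary \ref{C: MPdela CoreInv}, namely that $(\f^{\textcircled\#})^{\dagger}$ acts as $\f$ on $\Img(\f)$ and as $0$ on $[\Img(\f)]^{\perp}$. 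Since $\f$ restricts to an automorphism of $\Img(\f)=W_{\f}$ (as $i(\f)\leq 1$), its image is $\f(\Img(\f))=\Img(\f)$ and its kernel is $[\Img(\f)]^{\perp}$. Hence $\Ker(((\f^{\textcircled\#})^{\dagger})^*)=[\Img((\f^{\textcircled\#})^{\dagger})]^{\perp}=[\Img(\f)]^{\perp}=\Ker((\f^{\textcircled\#})^{\dagger})$, so $(\f^{\textcircled\#})^{\dagger}$ is EP as well.

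The argument is essentially a recollection of range-kernel data already established, so there is no genuine obstacle. The only point requiring a moment of care arises if one prefers to present $(\f^{\textcircled\#})^{\dagger}$ in the form $\f\circ P_{\Img(\f)}$ (Remark \ref{R: fcorecore fcoredagger}): there the kernel computation needs $\Img(\f)\cap\Ker(\f)=\{0\}$, valid because $i(\f)\leq 1$, to conclude that $P_{\Img(\f)}(v)\in\Ker(\f)$ forces $P_{\Img(\f)}(v)=0$.

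As an alternative packaging, the whole statement can be routed through the equivalence $\psi^{\#}=\psi^{\dagger}$. Corollary \ref{C: CoreofCore} already gives $(\f^{\textcircled\#})^{\textcircled\#}=(\f^{\textcircled\#})^{\dagger}$, and the geometric descriptions of the core and group inverses coincide once the AST splitting of $\f^{\textcircled\#}$ is seen to be the orthogonal one $\Img(\f)\oplus[\Img(\f)]^{\perp}$ (that is, $W_{\f^{\textcircled\#}}=\Img(\f)$ and $U_{\f^{\textcircled\#}}=[\Img(\f)]^{\perp}$). Under this identification Theorem \ref{T: GeoCaractCoreInv} yields $(\f^{\textcircled\#})^{\#}=(\f^{\textcircled\#})^{\textcircled\#}=(\f^{\textcircled\#})^{\dagger}$, from which the EP property of $\f^{\textcircled\#}$ follows at once; the same reasoning applied to $(\f^{\textcircled\#})^{\dagger}$, whose AST splitting is again $\Img(\f)\oplus[\Img(\f)]^{\perp}$, disposes of the second claim.
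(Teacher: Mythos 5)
Your proof is correct, but it runs on a different (equivalent) EP criterion than the paper's. Both arguments feed on the same geometric data from Corollary \ref{C: MPdela CoreInv}, but the paper checks the adjoint-free commutation condition: both composites $\f^{\textcircled\#}\circ(\f^{\textcircled\#})^{\dagger}$ and $(\f^{\textcircled\#})^{\dagger}\circ\f^{\textcircled\#}$ equal $P_{\Img(\f)}$ (identity on $\Img(\f)$, zero on $[\Img(\f)]^{\perp}$), and the second claim then comes for free from $((\f^{\textcircled\#})^{\dagger})^{\dagger}=\f^{\textcircled\#}$, since commutation is symmetric in the pair and so no second computation is needed. You instead verify $\Ker(\psi)=\Ker(\psi^*)$ and redo the range/kernel computation separately for $(\f^{\textcircled\#})^{\dagger}$. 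Two remarks on what this costs and buys. First, your criterion calls on the adjoint, and in the paper's stated generality ($V$ an arbitrary, possibly incomplete, inner product space) adjoints of linear maps need not exist; closed range is not the right justification here, as closedness of $\Img(\psi)$ buys admissibility for the Moore-Penrose inverse, not existence of $\psi^*$. For the operators at hand the adjoint does exist, because they factor through the orthogonal projection onto the finite-dimensional subspace $\Img(\f)$, and the paper itself writes $\f^*$ in this setting (Lemma \ref{L: fEPsifgroupesfcirc}), so this is a lapse of justification rather than of substance; still, the paper's commutation criterion sidesteps the issue entirely. Second, your alternative packaging via the equivalence of EP with $\psi^{\#}=\psi^{\dagger}$ reverses the paper's logical order: the paper deduces $(\f^{\textcircled\#})^{\#}=(\f^{\textcircled\#})^{\dagger}$ (Corollary \ref{C: GrupoDrazinyMPcoinciden}) \emph{from} the EP property, whereas you would establish the equality of the group and Moore-Penrose inverses geometrically (via the AST splitting $W_{\f^{\textcircled\#}}=\Img(\f)$, $U_{\f^{\textcircled\#}}=[\Img(\f)]^{\perp}$) and then read off EP from \cite[Proposition 3.13]{FpaGroup}; that is legitimate, not circular, and would prove Corollary \ref{C: GrupoDrazinyMPcoinciden} en route.
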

\begin{proof}
By the expression obtained in Corollary \ref{C: MPdela CoreInv}, it is straightforward to check that: $$\f^{\textcircled\#}\circ(\f^{\textcircled\#})^{\dagger}=(\f^{\textcircled\#})^{\dagger}\circ\f^{\textcircled\#}.$$ The last statement is deduced from the fact that for any finite potent endomorphism admissible for the Moore-Penrose inverse: $((\f)^{\dagger})^{\dagger}=(\f).$
\end{proof}

\begin{cor}\label{C: GrupoDrazinyMPcoinciden}
Given a finite potent endomorphism $\varphi \in \ed_k (V)$ with $i(\f)\leq 1$, then: $$(\f^{\textcircled\#})^{\dagger}=(\f^{\textcircled\#})^{\#}=(\f^{\textcircled\#})^{D}. $$
\end{cor}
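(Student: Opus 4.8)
The plan is to reduce the triple equality to two essentially independent facts about the operator $\f^{\textcircled\#}$, each already within reach. First, by Corollary \ref{C: fcircledfpdeind1} the core inverse $\f^{\textcircled\#}$ is itself a finite potent endomorphism with $i(\f^{\textcircled\#})\leq 1$, so its group inverse exists; since the Drazin inverse of a finite potent endomorphism coincides with its group inverse precisely when the index is at most one (recall \eqref{eq:index1} and the discussion in Section \ref{ss:grop-matrix-fp}), the equality $(\f^{\textcircled\#})^{\#}=(\f^{\textcircled\#})^{D}$ is immediate. The only substantive point left is therefore $(\f^{\textcircled\#})^{\dagger}=(\f^{\textcircled\#})^{\#}$.

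For this I would compute the group inverse of $\f^{\textcircled\#}$ explicitly and match it against the formula for $(\f^{\textcircled\#})^{\dagger}$ already obtained in Corollary \ref{C: MPdela CoreInv}. From the proof of that corollary one has $\Img(\f^{\textcircled\#})=\Img(\f)$ and $\Ker(\f^{\textcircled\#})=[\Img(\f)]^{\perp}$, so the AST-decomposition induced by $\f^{\textcircled\#}$ is $V=\Img(\f)\oplus[\Img(\f)]^{\perp}$, with $W_{\f^{\textcircled\#}}=\Img(\f)$ and $U_{\f^{\textcircled\#}}=[\Img(\f)]^{\perp}$. Since for an index-one finite potent endomorphism the group inverse restricts to the inverse of the endomorphism on its image $W$ and vanishes on its kernel $U$, and since $(\f^{\textcircled\#})_{\vert_{\Img(\f)}}=(\f_{\vert_{\Img(\f)}})^{-1}$ by Theorem \ref{T: GeoCaractCoreInv}, inverting once more yields
\begin{equation*}
(\f^{\textcircled\#})^{\#}(v) = \left \{ \begin{array}{ccl} \f(v) & \text{ if } & v\in\Img(\f)\smallskip \\  0 & \text{ if } & v\in [\Img(\f)]^{\perp}\end{array} \right . .
\end{equation*}

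This is exactly the expression for $(\f^{\textcircled\#})^{\dagger}$ found in Corollary \ref{C: MPdela CoreInv}, so the two operators agree term by term on the decomposition $V=\Img(\f)\oplus[\Img(\f)]^{\perp}$, giving $(\f^{\textcircled\#})^{\dagger}=(\f^{\textcircled\#})^{\#}$; combining with the first step produces the full chain. An alternative route to this middle equality would be to invoke the EP property of $\f^{\textcircled\#}$ established in Corollary \ref{L: CoreisEP} together with the standard equivalence that an operator possessing a group inverse is EP if and only if its Moore-Penrose and group inverses coincide, but I prefer the explicit computation since it is self-contained and stays within the AST-decomposition language used throughout. I do not anticipate a genuine obstacle: the only point requiring care is the correct identification of $W_{\f^{\textcircled\#}}$ and $U_{\f^{\textcircled\#}}$ and the resulting inversion $((\f_{\vert_{\Img(\f)}})^{-1})^{-1}=\f_{\vert_{\Img(\f)}}$, after which the two explicit formulas match.
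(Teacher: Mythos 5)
Your proposal is correct, but for the key equality $(\f^{\textcircled\#})^{\dagger}=(\f^{\textcircled\#})^{\#}$ it takes a genuinely different route from the paper. The paper's proof is a two-line citation: it invokes \cite[Proposition 3.13]{FpaGroup} (a finite potent endomorphism admissible for the Moore-Penrose inverse is EP if and only if its group and Moore-Penrose inverses coincide) together with the EP property of $\f^{\textcircled\#}$ just established in Corollary \ref{L: CoreisEP} --- this is exactly the ``alternative route'' you mention and set aside at the end of your proposal. Your argument instead computes $(\f^{\textcircled\#})^{\#}$ explicitly: since $i(\f^{\textcircled\#})\leq 1$ by Corollary \ref{C: fcircledfpdeind1}, its AST-decomposition has $W_{\f^{\textcircled\#}}=\Img(\f^{\textcircled\#})=\Img(\f)$ and $U_{\f^{\textcircled\#}}=\Ker(\f^{\textcircled\#})=[\Img(\f)]^{\perp}$, and the group (equivalently Drazin) inverse of an index-$\leq 1$ finite potent endomorphism inverts the map on $W$ and vanishes on $U$; combined with $(\f^{\textcircled\#})_{\vert_{\Img(\f)}}=(\f_{\vert_{\Img(\f)}})^{-1}$ from Theorem \ref{T: GeoCaractCoreInv}, this reproduces exactly the formula for $(\f^{\textcircled\#})^{\dagger}$ of Corollary \ref{C: MPdela CoreInv}. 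What your route buys is self-containedness: every ingredient is proved in the paper itself (the explicit description of the group inverse on the AST-decomposition is the same one the paper uses in Lemma \ref{L: fEPsifgroupesfcirc}), and the computation mirrors the one already carried out for $(\f^{\textcircled\#})^{\textcircled\#}$ in Corollary \ref{C: CoreofCore}; what the paper's route buys is brevity, at the cost of outsourcing the EP criterion to an external reference. The two treatments of the remaining equality $(\f^{\textcircled\#})^{\#}=(\f^{\textcircled\#})^{D}$ coincide: index at most one makes the Drazin inverse a group inverse.
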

\begin{proof}
This is a direct consequence of \cite[Proposition 3.13]{FpaGroup}, which states that a finite potent endomorphism admissible for Moore-Penrose inverse is EP if and only if $\f^{\#}=\f^{\dagger}.$ The other equality holds due to the well known fact that $\f^{D}=\f^{\#}$ when $i(\f)\leq 1.$
\end{proof}

\begin{cor}\label{C: PropAlge}
Given a finite potent endomorphism $\varphi \in \ed_k (V)$ with $i(\f)\leq 1$, then: \begin{itemize}
\item[•]$\f^{\textcircled\#}\in X_{\f}(1,2)$ \smallskip;
\item[•]$(\f^{\textcircled\#})^2\circ\f=\f^{\#}$ \smallskip;
\item[•]$(\f^{\textcircled\#})^m=(\f^m)^{\textcircled\#}$ \smallskip;
\item[•]$\f^{\textcircled\#} \circ \f=\f^{\#}\circ \f.$ 
\end{itemize}
\end{cor}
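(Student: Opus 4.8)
The plan is to verify each of the four assertions in turn, leaning heavily on the geometric description of $\f^{\textcircled\#}$ from Theorem \ref{T: GeoCaractCoreInv} and the algebraic identity $\f^{\textcircled\#}=\f^{\#}\circ\f\circ\f^{\dagger}$ from Theorem \ref{T: AlgDefCore-Inver}, together with the fact that $V=\Img(\f)\oplus[\Img(\f)]^{\perp}$ when $i(\f)\leq 1$ (Lemma \ref{R: FPileq1isAdmissible}). The unifying idea is that on $\Img(\f)=W_{\f}$ the core inverse acts as $(\f_{\vert_{\Img(\f)}})^{-1}$, while it annihilates $[\Img(\f)]^{\perp}$; one can then compute every composite by checking its action separately on these two summands.

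First I would treat $\f^{\textcircled\#}\in X_{\f}(1,2)$. The $\{1\}$-condition $\f\circ\f^{\textcircled\#}\circ\f=\f$ was already extracted inside the proof of Proposition \ref{P: ConsecuenciasDefinicion}, so I would simply cite it. For the $\{2\}$-condition $\f^{\textcircled\#}\circ\f\circ\f^{\textcircled\#}=\f^{\textcircled\#}$, I would evaluate on the two summands using Theorem \ref{T: GeoCaractCoreInv}: on $v\in\Img(\f)$ one has $\f^{\textcircled\#}(v)\in\Img(\f)$ (since $\Img(\f^{\textcircled\#})\subseteq\Img(\f)$), so $\f\circ\f^{\textcircled\#}$ restricts to the identity on $\Img(\f)$ and the composite reduces to $\f^{\textcircled\#}(v)$; on $v\in[\Img(\f)]^{\perp}$ both sides vanish. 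The second item $(\f^{\textcircled\#})^2\circ\f=\f^{\#}$ follows the same pattern: on $\Img(\f)=W_\f$, $\f$ maps $W_\f$ isomorphically to itself, then $(\f^{\textcircled\#})^2$ applies $(\f_{\vert_{\Img(\f)}})^{-1}$ twice, giving $(\f_{\vert_{W_\f}})^{-1}=\f^{\#}_{\vert_{W_\f}}$; on $[\Img(\f)]^{\perp}=U_\f$ both $\f^{\#}$ and the left side vanish because $\Ker(\f^{\#})=U_\f$.

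The fourth item $\f^{\textcircled\#}\circ\f=\f^{\#}\circ\f$ is again a split computation: on $W_\f$ both composites equal the identity (since $\f^{\textcircled\#}$ and $\f^{\#}$ both invert $\f_{\vert_{W_\f}}$), and on $U_\f=\Ker(\f)$ both vanish outright because $\f$ does. For the third item $(\f^{\textcircled\#})^m=(\f^m)^{\textcircled\#}$ I would first note that $i(\f^m)\leq 1$ with $\Img(\f^m)=\Img(\f)=W_\f$ and $\Ker(\f^m)=\Ker(\f)=U_\f$, so the orthogonal decomposition and hence $(\f^m)^{\textcircled\#}$ is governed by the same subspaces via Theorem \ref{T: GeoCaractCoreInv}; on $W_\f$ the map $\f^m_{\vert_{W_\f}}=(\f_{\vert_{W_\f}})^m$ so its inverse is $((\f_{\vert_{W_\f}})^{-1})^m$, which is exactly $(\f^{\textcircled\#})^m$ restricted to $W_\f$, and both sides kill $[\Img(\f)]^{\perp}$.

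The main obstacle I anticipate is the third item, specifically justifying that $\Img(\f^m)$ and $[\Img(\f^m)]^{\perp}$ coincide with $\Img(\f)$ and $[\Img(\f)]^{\perp}$, so that $(\f^m)^{\textcircled\#}$ is even defined on the same decomposition and the comparison with $(\f^{\textcircled\#})^m$ is meaningful; this rests on the equivalences in \eqref{eq:index1} applied to $\f^m$ and on the equality $\Img(\f)=W_\f$. Once the subspaces are identified, all four verifications are routine restriction-by-restriction checks, and I would present them compactly rather than writing out every evaluation.
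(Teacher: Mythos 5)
Your overall strategy --- verifying each identity by restriction to complementary subspaces via the geometric characterization of Theorem \ref{T: GeoCaractCoreInv} --- is legitimate, and it is genuinely different from the paper's proof: the paper establishes the first, second and fourth items purely algebraically from $\f^{\textcircled\#}=\f^{\#}\circ\f\circ\f^{\dagger}$ (Theorem \ref{T: AlgDefCore-Inver}) together with the defining identities of the group and Moore--Penrose inverses, and argues geometrically (exactly as you do) only for the third item. Your treatments of the first, third and fourth items are correct; in particular, citing Proposition \ref{P: ConsecuenciasDefinicion} for the $\{1\}$-condition and checking the third item on the common subspaces $W_{\f^m}=W_\f$, $U_{\f^m}=U_\f$ matches the paper.

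However, there is a genuine error in your second item: you assert $[\Img(\f)]^{\perp}=U_\f$ and conclude that both $(\f^{\textcircled\#})^2\circ\f$ and $\f^{\#}$ vanish on this subspace. In general $[\Img(\f)]^{\perp}\neq U_\f$: when $i(\f)\leq 1$ one has $U_\f=\Ker(\f)$, and the equality $[\Img(\f)]^{\perp}=\Ker(\f)$ holds precisely when $\f$ is EP (this is the content of Lemma \ref{L: fEPsifgroupesfcirc}; conflating these two subspaces erases exactly the distinction between $\f^{\textcircled\#}$ and $\f^{\#}$). Moreover, neither side of the identity vanishes on $[\Img(\f)]^{\perp}$ in general: for $\f(x,y)=(x+y,0)$ on $\R^2$ one has $i(\f)=1$, $[\Img(\f)]^{\perp}=\langle (0,1)\rangle$, and both $(\f^{\textcircled\#})^2\circ\f$ and $\f^{\#}$ send $(0,1)$ to $(1,0)\neq 0$. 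The repair is immediate: since $V=W_\f\oplus U_\f=\Img(\f)\oplus\Ker(\f)$, it suffices to check the identity on $W_\f$ (your computation there is correct) and on $\Ker(\f)$, where the left-hand side vanishes because $\f$ does and the right-hand side vanishes because $\Ker(\f^{\#})=\Ker(\f)$. With the orthogonal complement replaced by the kernel, your proof of the second item becomes correct; as written, that step fails.
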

\begin{proof}
The first statement is straightforward from the algebraic expression obtained for the Core Inverse in Theorem \ref{T: AlgDefCore-Inver}. The second one can be proven by definitions using the commutativity from the group inverse, \begin{align*}
(\f^{\textcircled\#})^2\circ\f & = \f^{\textcircled\#}\circ\f^{\#}\circ\f\circ\f^{\dagger}\circ\f=\f^{\textcircled\#}\circ\f^{\#}\circ\f=\f^{\textcircled\#}\circ\f\circ\f^{\#}=\\ & =\f^{\#}\circ\f\circ\f^{\dagger}\circ\f\circ\f^{\#}=\f^{\#}\circ\f\circ\f^{\#}=\\&=\f^{\#}.
\end{align*} For the third claim, let $V=W_{\f}\oplus U_{\f}=\Img(\f)\oplus \Ker(\f)$ be the AST decomposition of $V$ induced by $\f$ in our conditions. For all $m\in \Z^+,$ one has that: $W_{\f^m}=W_{\f}$ and $U_{\f^m}=U_{\f}.$ Therefore, the claim is deduced from the fact that: $$([\f^m]_{\vert_{\Img(\f)}})^{-1}=([(\f_{\vert_{\Img(\f)}})]^{-1})^m.$$\\The last equality is straightforward: $\f^{\textcircled\#}\circ\f=\f^{\#}\circ\f\circ\f^{\dagger}\circ\f=\f^{\#}\circ\f.$ 
\end{proof}

\begin{lem}\label{L: fEPsifgroupesfcirc}
Given a finite potent endomorphism $\varphi \in \ed_k (V)$ with $i(\f)\leq 1$, then: $$\f^{\textcircled\#}=\f^{\#} \text{ if and only if } \f \text{ is EP. } $$
\end{lem}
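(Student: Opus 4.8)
The plan is to reduce the statement to two facts already at hand: the algebraic formula $\f^{\textcircled\#}=\f^{\#}\circ\f\circ\f^{\dagger}$ from Theorem \ref{T: AlgDefCore-Inver}, and the criterion (\cite[Proposition 3.13]{FpaGroup}, already invoked in Corollary \ref{C: GrupoDrazinyMPcoinciden}) that a finite potent endomorphism admissible for the Moore-Penrose inverse is EP if and only if $\f^{\#}=\f^{\dagger}$. Recall that $i(\f)\leq 1$ guarantees admissibility by Lemma \ref{R: FPileq1isAdmissible}, so both tools apply. With these in hand each implication becomes short.

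For the forward implication, I would assume $\f$ is EP, so that $\f^{\dagger}=\f^{\#}$. Substituting into the algebraic formula gives $\f^{\textcircled\#}=\f^{\#}\circ\f\circ\f^{\#}$, and the second group-inverse axiom $\f^{\#}\circ\f\circ\f^{\#}=\f^{\#}$ yields $\f^{\textcircled\#}=\f^{\#}$ at once.

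The converse is where the real work lies. Assuming $\f^{\textcircled\#}=\f^{\#}$, the algebraic formula becomes $\f^{\#}=\f^{\#}\circ\f\circ\f^{\dagger}$. Composing on the left with $\f$ and using $\f\circ\f^{\#}\circ\f=\f$, I would obtain $\f\circ\f^{\#}=\f\circ\f^{\dagger}$. Now $\f\circ\f^{\dagger}=P_{\Img(\f)}$ is the orthogonal projection onto $\Img(\f)$, whose kernel is $[\Img(\f)]^{\perp}$, whereas $\f\circ\f^{\#}$ is the oblique projection onto $\Img(\f)$ along $\Ker(\f)$ — this identification being legitimate precisely because $i(\f)\leq 1$ forces $V=\Img(\f)\oplus\Ker(\f)$, with $\f\circ\f^{\#}$ acting as the identity on $\Img(\f)$ and as $0$ on $\Ker(\f)$. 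Two projections with a common image coincide only when their kernels agree, so the established equality gives $\Ker(\f)=[\Img(\f)]^{\perp}=\Ker(\f^{*})$, which is exactly the EP condition. The main obstacle is this last step: correctly reading $\f\circ\f^{\#}$ as a projection along $\Ker(\f)$ and converting the equality of the two projections into the kernel identity $\Ker(\f)=\Ker(\f^{*})$ (via the standard fact $\Ker(\f^{*})=[\Img(\f)]^{\perp}$); the remaining manipulations are purely formal. Alternatively, one could run the same comparison directly from the explicit formulas of Theorem \ref{T: GeoCaractCoreInv} and the group inverse, noting that $\f^{\textcircled\#}$ and $\f^{\#}$ already agree on $\Img(\f)$ and differ only through their annihilated complements $[\Img(\f)]^{\perp}$ versus $\Ker(\f)$.
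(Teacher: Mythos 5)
Your proof is correct, but it takes a genuinely different route from the paper's. The paper argues geometrically on both sides: for $\f^{\textcircled\#}=\f^{\#}\Rightarrow \f$ EP it simply notes that equal operators have equal kernels, and since $\Ker(\f^{\textcircled\#})=[\Img(\f)]^{\perp}$ while $\Ker(\f^{\#})=\Ker(\f)$, the equality forces $[\Img(\f)]^{\perp}=\Ker(\f)$, whence $\Img(\f)=[\Ker(\f)]^{\perp}=\Img(\f^*)$ by taking orthogonal complements; for the converse it observes that once $[\Img(\f)]^{\perp}=\Ker(\f)$, the explicit formulas for $\f^{\textcircled\#}$ (Theorem \ref{T: GeoCaractCoreInv}) and for $\f^{\#}$ literally coincide --- which is exactly the ``alternative'' you sketch in your final sentence. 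Your primary route is instead algebraic: for EP $\Rightarrow$ equality you import the criterion that an admissible finite potent endomorphism is EP if and only if $\f^{\#}=\f^{\dagger}$ (\cite[Proposition 3.13]{FpaGroup}, which the paper itself invokes only later, in Corollary \ref{C: GrupoDrazinyMPcoinciden}) and substitute into Theorem \ref{T: AlgDefCore-Inver}; for the converse you left-compose with $\f$ to obtain $\f\circ\f^{\#}=\f\circ\f^{\dagger}$ and compare the two idempotents, oblique onto $\Img(\f)$ along $\Ker(\f)$ versus orthogonal along $[\Img(\f)]^{\perp}$ (your kernel computation for $\f\circ\f^{\#}$ correctly uses $i(\f)\leq 1$). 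What the paper's route buys is self-containedness and brevity: no external EP criterion, and a one-line kernel comparison of the inverses themselves rather than of the derived projections. What yours buys is that one direction becomes a purely formal substitution, and the identity $\f\circ\f^{\#}=\f\circ\f^{\dagger}$ you isolate is itself a useful characterization of EP. One small caution: Lemma \ref{R: FPileq1isAdmissible} literally gives only the closedness of $\Img(\f)$, hence $V=\Img(\f)\oplus[\Img(\f)]^{\perp}$; full admissibility also requires $V=\Ker(\f)\oplus[\Ker(\f)]^{\perp}$, which the paper leaves implicit whenever it writes $\f^{\dagger}$ under the hypothesis $i(\f)\leq 1$ --- your proof inherits that same convention, so this is not a gap relative to the paper.
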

\begin{proof}
If $\f^{\textcircled\#}=\f^{\#}$ then, in particular: $\Ker(\f^{\textcircled\#})=\Ker(\f^{\#}),$ so: $[\Img(\f)]^{\perp}=\Ker(\f)$ and as $\Img(\f)$ is closed in our hypothesis, taking orthogonal yields as desired: $\Img(\f)=[\Ker(\f)]^{\perp}=\Img(\f^*).$\\Conversely, if $\Img(\f)=\Img(\f^*)$ then $\Img(\f)=[\Ker(\f)]^{\perp}$ and therefore: $[\Img(\f)]^{\perp}=\Ker(\f)$ so the claim is deduced by the expressions of both the  Core Inverse and the Group Inverse, recall: $$\f^{\textcircled\#}(v) = \left \{ \begin{array}{ccl} (\f_{\vert_{\Img(\f)}})^{-1} & \text{ if } & v\in \Img(\f)\\ 0 & \text{ if } & v\in [\Img(\f)]^{\perp} \end{array} \right . ,$$ and    $$\f^{\#}(v) = \left \{ \begin{array}{ccl} (\f_{\vert_{\Img(\f)}})^{-1} & \text{ if } & v\in \Img(\f)\\ 0 & \text{ if } & v\in \Ker(\f) \end{array} \right . ,$$ as $i(\f)\leq 1.$
\end{proof}

On a similar way to \cite[Theorem 2]{BakTren} let us highlight some properties of the Core-Inverse.                                                                                                                                                                                                                                                                                                                                                                                                                                                                                                                                                                                                                                                                                                                                                                                                                                                                                                                                                                                                                                                                                                                                                                                                                                                                                                                                                                                                                                                                                                                                                                                                                                                                                                                                                                                                                                                                                                                                                                                                                                                                                                                                                                                                                                                                                                                                                                                                                                                                                                                                                                                                                                                                                                                                                                                                                                                                                                                                                                                                                                                                                                                                                                                                                                                                                                                                                                                                                                                                                                                                                                                                                                                                                                                                                                                                                                                                                                                                                                                                                                                                                                                                                                                                                                                                                                                                                                                                                                                                                                                                                                                                                                                                                                                                                                                                                                                                                                                                                                                                                                                                                                                                                                                                                                                                                                                                                                                                                                                                                                                                                                                                                                                                                                                                                                                                                                                                                                                                                                                                                                                                                                                                                                                                                                                                                                                                                                                                                                                                                                                                                                                                                                                                                                                                                                                                                                                                                                                                                                                                                                                                                                                                                                                                                                                                                                                                                                                                                                                                                                                                                                                                                                                                                                                                                                                                                                                                                                                                                                                                                                                                                                                                                                                                                                                                                                                                                                                                                                                                                                                                                                                                                                                                                                                                                                                                                                                                                                                                                                                                                                                                                                                                                                                                                                                                                                                                                                                                                                                                                                                                                                                                                                                                                                                                                                                                                                                                                                                                                                                                                                                                                                                                                                                                                                                                                                                                                                                                                                                                                                                                                                                                                                                                                                                                                                                                                                                                                                                                                                                                                                                                                                                                                                                                                                                                                                                                                                                                                                                                                                                                                                                                                                                                                                                                                                                                                                                                                                                                                                                                                                                                                                                                                                                                                                                                                                                                                                                                                                                                                                                                                                                                                                                                                                                                                                                                                                                                                                                                                                                                                                                                                                                                                                                                                                                                                                                                                                                                                                                                                                                                                                                                                                                                                                                                                                                                                                                                                                                                                                                                                                                                                                                                                                                                                                                                                                                                                                                                                                                                                                                                                                                                                           
\begin{prop}\label{P: PropertiesofCoreInv}
Given a finite potent endomorphism $\varphi \in \ed_k (V)$ with $i(\f)\leq 1$, then: \begin{itemize}
\item[I.)]$\f^{\textcircled\#}=0$ \text{ if and only if } $\f=0$ ;\smallskip
\item[II.)]$\f^{\textcircled\#}=P_{\Img(\f)}$ \text{ if and only if } $\f^2=\f$ ;\smallskip
\item[III.)]$\f^{\textcircled\#}=\f^{\dagger}$ \text{ if and only if } $\f$ is EP ;\smallskip
\item[IV.)]$\f^{\textcircled\#}=\f$ \text{ if and only if } $\f^3=\f$ and $\f$ is EP  ;\smallskip
\end{itemize}
\end{prop}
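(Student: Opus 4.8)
The plan is to prove all four equivalences by reducing each equality of operators to a condition on the action of $\f$ on the two pieces of the orthogonal decomposition $V=\Img(\f)\oplus[\Img(\f)]^{\perp}$, which is available because $i(\f)\leq 1$ forces $\Img(\f)$ to be closed (Lemma \ref{R: FPileq1isAdmissible}). The workhorse throughout is the geometric characterization of Theorem \ref{T: GeoCaractCoreInv}, namely that $\f^{\textcircled\#}$ acts as $(\f_{\vert_{\Img(\f)}})^{-1}$ on $\Img(\f)$ and as $0$ on $[\Img(\f)]^{\perp}$, together with the parallel formula for $\f^{\#}$ recalled in the proof of Lemma \ref{L: fEPsifgroupesfcirc}.

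For I.), $\f=0$ gives $\Img(\f)=\{0\}$ and hence $\f^{\textcircled\#}=0$ trivially, while conversely $\f^{\textcircled\#}=0$ forces $(\f_{\vert_{\Img(\f)}})^{-1}=0$ on $\Img(\f)$; since this restriction is an isomorphism, $\Img(\f)=\{0\}$ and so $\f=0$. For II.), both $\f^{\textcircled\#}$ and $P_{\Img(\f)}$ vanish on $[\Img(\f)]^{\perp}$, so the equality reduces to $(\f_{\vert_{\Img(\f)}})^{-1}=\Id_{\vert_{\Img(\f)}}$, i.e.\ $\f_{\vert_{\Img(\f)}}=\Id_{\vert_{\Img(\f)}}$; using that $\f$ vanishes on $\Ker(\f)$ and that $\f_{\vert_{W_\f}}$ is invertible, this is exactly $\f^2=\f$. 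For III.), I would compare images: $\Img(\f^{\textcircled\#})=\Img(\f)$ whereas the explicit form of $\f^{\dagger}$ gives $\Img(\f^{\dagger})=[\Ker(\f)]^{\perp}=\Img(\f^*)$, so $\f^{\textcircled\#}=\f^{\dagger}$ forces $\Img(\f)=\Img(\f^*)$, which is the EP condition; conversely, if $\f$ is EP then Lemma \ref{L: fEPsifgroupesfcirc} gives $\f^{\textcircled\#}=\f^{\#}$ and \cite[Proposition 3.13]{FpaGroup} (invoked in Corollary \ref{C: GrupoDrazinyMPcoinciden}) gives $\f^{\#}=\f^{\dagger}$, whence $\f^{\textcircled\#}=\f^{\dagger}$.

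The substantive item is IV.). In the forward direction I would extract the two conditions from two different features of the equality $\f^{\textcircled\#}=\f$. First, since $\f^{\textcircled\#}\in X_{\f}(1,2)$ by Corollary \ref{C: PropAlge}, substituting $\f^{\textcircled\#}=\f$ into $\f\circ\f^{\textcircled\#}\circ\f=\f$ instantly yields $\f^3=\f$. Second, since $\Ker(\f^{\textcircled\#})=[\Img(\f)]^{\perp}$ (geometric characterization), $\f^{\textcircled\#}=\f$ forces $\Ker(\f)=[\Img(\f)]^{\perp}$, which is already the EP condition. Conversely, if $\f$ is EP then $\Ker(\f)=[\Img(\f)]^{\perp}$, so both $\f$ and $\f^{\textcircled\#}$ vanish on $[\Img(\f)]^{\perp}$; and restricting $\f^3=\f$ to the $\f$-invariant finite-dimensional piece $W_\f=\Img(\f)$, where $\f$ is invertible, gives $(\f_{\vert_{\Img(\f)}})^2=\Id_{\vert_{\Img(\f)}}$, hence $\f_{\vert_{\Img(\f)}}=(\f_{\vert_{\Img(\f)}})^{-1}=\f^{\textcircled\#}_{\vert_{\Img(\f)}}$, so the two agree everywhere.

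The main obstacle I anticipate is precisely the forward direction of IV.): one must see that both hypotheses are genuinely needed and are recovered from distinct aspects of $\f^{\textcircled\#}=\f$ — the reflexive $\{1\}$-inverse relation yields the algebraic identity $\f^3=\f$, whereas the kernel information yields EP. One should also be careful that the passage from $\Ker(\f)=[\Img(\f)]^{\perp}$ to $\Img(\f)=[\Ker(\f)]^{\perp}=\Img(\f^*)$ uses the closedness of $\Img(\f)$, so that taking the double orthogonal complement returns $\Img(\f)$ itself rather than merely its closure.
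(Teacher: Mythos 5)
Your proof is correct. For items I.)--III.) it follows essentially the paper's own route: the paper handles I.) by composing $\f^{\#}\circ P_{\Img(\f)}=0$ with $\f$ on both sides (a more algebraic phrasing of your observation that an isomorphism of $\Img(\f)$ can be the zero map only if $\Img(\f)=\{0\}$), II.) by the identical remark that $\f^2=\f$ if and only if $\f_{\vert_{\Img(\f)}}=\Id_{\vert_{\Img(\f)}}$, and III.) by an argument declared ``analogous to the one showed in Lemma \ref{L: fEPsifgroupesfcirc}'', which is exactly your kernel/image comparison. The genuine divergence is IV.): the paper gives no argument at all, simply stating that the claim is the restriction to index $1$ of \cite[Proposition 3.14]{FpDMP}, whereas you prove it from scratch --- extracting $\f^3=\f$ from $\f^{\textcircled\#}\in X_{\f}(1)$ (Corollary \ref{C: PropAlge}), extracting the EP condition from $\Ker(\f^{\textcircled\#})=[\Img(\f)]^{\perp}$, and proving the converse by checking agreement of $\f$ and $\f^{\textcircled\#}$ separately on the two summands of $V=\Img(\f)\oplus[\Img(\f)]^{\perp}$. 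Your version buys self-containedness: it uses only the geometric characterization (Theorem \ref{T: GeoCaractCoreInv}) and facts already established in this paper, and it makes transparent which hypothesis is recovered from which feature of the equality $\f^{\textcircled\#}=\f$; the paper's citation is shorter and ties the statement to the lDMP-inverse literature, consistent with its stated strategy of importing from \cite{FpDMP} whatever the core inverse shares with lDMP inverses. One small remark: the caution you raise about the double orthogonal complement in the passage from $\Ker(\f)=[\Img(\f)]^{\perp}$ to $\Img(\f)=[\Ker(\f)]^{\perp}$ is well placed but in fact automatic here, since $\Img(\f)=W_{\f}$ is finite dimensional whenever $i(\f)\leq 1$, so closedness never genuinely threatens the argument.
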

\begin{proof}
For $I),$ notice that $\f^{\textcircled\#}=0$ if and only if $\f^{\#}\circ P_{\Img(\f)}=0.$ Pre-composing and post-composing with $\f$ one deduces that $0=\f\circ\f^{\#}\circ\f=\f.$ Conversely, if $\f=0$ then it follows directly from Definition \ref{D:core-inver-fp93735} that $\f^{\textcircled\#}=0.$\\ Statement $II),$ follows from the fact that $\f^2=\f$ if and only if $\f_{\vert_{\Img(\f)}}=Id_{\vert_{\Img(\f)}}.$\\The proof of $III)$ is analogous to the one showed in Lemma \ref{L: fEPsifgroupesfcirc}.\\ Claim number $IV)$ is the restriction to index $1$ of \cite[Proposition 3.14]{FpDMP}.
\end{proof}

\subsection{Some remarks on the Core Inverse for bounded finite potent endomorphisms over Hilbert spaces.}

Let us now include the theorem corresponding to \cite[Theorem 2.1]{Wang} in the framework of our theory. Namely, we will settle $\h$ a Hilbert space in such a way that the adjoint operator of a bounded finite potent endomorphism is well defined.\\Firstly, let us point out that from the algebraic characterization of the Core Inverse deduced in Theorem \ref{T: AlgDefCore-Inver} one obtains immediately that $\f^{\textcircled\#}$ is a continuous (bounded) finite potent operator, as it is composition of three continuous operators.

\begin{thm}\label{T: ExUniCoreInvFP}
If $\f \in \mathcal{B}_{fp}(\h)$ with $i(\f)\leq 1$, then the Core Inverse $\f^{\textcircled\#}$ of $\f$ is the unique linear operator verifying that: \begin{itemize}
\item[I.)]$\f\circ\f^{\textcircled\#}\circ\f=\f;$\smallskip

\item[II.)]$\f\circ (\f^{\textcircled\#})^2=\f^{\textcircled\#};$\smallskip

\item[III.)]$(\f\circ\f^{\textcircled\#})^*=\f\circ\f^{\textcircled\#}.$
\end{itemize}
\end{thm}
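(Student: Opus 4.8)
The plan is to prove this by establishing two things: first, that the Core Inverse $\f^{\textcircled\#}$ actually satisfies the three listed conditions, and second, that these three conditions determine $\f^{\textcircled\#}$ uniquely. For the existence part, I would work directly from the geometric characterization in Theorem \ref{T: GeoCaractCoreInv}, which tells us exactly how $\f^{\textcircled\#}$ acts on the decomposition $\h = \Img(\f) \oplus [\Img(\f)]^{\perp}$. Condition I.) is just the statement $\f^{\textcircled\#} \in X_{\f}(1)$, which was already established inside the proof of Proposition \ref{P: ConsecuenciasDefinicion}; alternatively it follows from the algebraic characterization $\f^{\textcircled\#} = \f^{\#} \circ \f \circ \f^{\dagger}$ together with the $\{1\}$-inverse identities for $\f^{\#}$ and $\f^{\dagger}$. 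Condition II.) is precisely equation \eqref{eq: fcircffcirc2}, which was derived abstractly from the definition, so it holds automatically. Condition III.) reduces to checking that $\f \circ \f^{\textcircled\#}$ is self-adjoint; but by the first bullet following Definition \ref{D:core-inver-fp93735} we have $\f \circ \f^{\textcircled\#} = \f \circ \f^{\dagger} = P_{\Img(\f)}$, and $P_{\Img(\f)}$ is an orthogonal projection, hence self-adjoint. This disposes of the existence direction almost entirely from facts already proved.

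The substantive part is uniqueness: I must show that any operator $\g$ satisfying I.), II.), III.) must equal $\f^{\textcircled\#}$. First I would extract from these three conditions enough geometric information to pin down $\g$. From III.), $\f \circ \g$ is a self-adjoint idempotent: indeed I.) gives $(\f \circ \g)\circ(\f\circ\g) = \f\circ(\g\circ\f\circ\g)$, and combining I.) with II.) shows $\f\circ\g$ is idempotent, so $\f\circ\g$ is an orthogonal projection onto its image. The key step is to identify that image as $\Img(\f)$. From I.) we get $\Img(\f) = \Img(\f\circ\g\circ\f) \subseteq \Img(\f\circ\g) \subseteq \Img(\f)$, so $\Img(\f\circ\g) = \Img(\f)$, and since $\f\circ\g$ is an orthogonal projection with this image, necessarily $\f\circ\g = P_{\Img(\f)} = \f\circ\f^{\dagger}$. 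Next, from II.) one has $\Img(\g) = \Img(\f\circ(\g)^2) \subseteq \Img(\f)$, giving the inclusion $\Img(\g) \subseteq \Img(\f)$. But these are exactly the two defining conditions of Definition \ref{D:core-inver-fp93735}, so by the uniqueness already established in Corollary \ref{C: fcircledfpdeind1} we conclude $\g = \f^{\textcircled\#}$.

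The main obstacle I anticipate is the step identifying $\f \circ \g$ as the \emph{orthogonal} projection $P_{\Img(\f)}$ rather than merely some idempotent with image $\Img(\f)$. This is where condition III.) is indispensable: self-adjointness of the idempotent $\f\circ\g$ forces it to be the orthogonal projection onto its range, and without III.) one would only recover an oblique projection and hence only the group-inverse-type characterization. The care needed here is to verify idempotency of $\f\circ\g$ cleanly from I.) and II.); the cleanest route is $(\f\circ\g)^2 = \f\circ(\g\circ\f\circ\g)$, and since $\g\circ\f\circ\g = \g$ would require knowing $\g \in X_{\f}(2)$, which is not directly assumed, I would instead argue $(\f\circ\g)^2 = (\f\circ\g)\circ(\f\circ\g)$ and use II.) in the form $\f\circ\g\circ\g = \g$ composed suitably, reducing to $\f\circ\g\circ\f\circ\g = \f\circ\g$ via I.). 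Everything else is bookkeeping on the orthogonal decomposition $\h = \Img(\f)\oplus[\Img(\f)]^{\perp}$, which is legitimate because $\Img(\f)$ is closed and finite-dimensional by Lemma \ref{R: FPileq1isAdmissible}, so the admissibility hypotheses needed for $\f^{\dagger}$ and $P_{\Img(\f)}$ are all in force.
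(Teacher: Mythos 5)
Your proposal is correct and follows essentially the same route as the paper: existence is verified exactly as there (via $\f\circ\f^{\textcircled\#}=\f\circ\f^{\dagger}$, equation \eqref{eq: fcircffcirc2}, and self-adjointness of $P_{\Img(\f)}$), and for uniqueness you show, as the paper does, that any $\g$ satisfying I.)--III.) fulfills the two conditions of Definition \ref{D:core-inver-fp93735} and then invoke Corollary \ref{C: fcircledfpdeind1}. The only presentational difference is that you pin down $\f\circ\g=P_{\Img(\f)}$ through the abstract fact that a self-adjoint idempotent with closed image is the orthogonal projection onto that image, whereas the paper checks the equality $\f\circ\g=\f\circ\f^{\dagger}$ directly on the two summands of $\h=\Img(\f)\oplus[\Img(\f)]^{\perp}$, using $[\Img(\f)]^{\perp}=\Ker(\f^*)$; these amount to the same computation.
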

\begin{proof}
Let us check that the operator from Definition \ref{D:core-inver-fp93735} satisfies this three conditions. Clearly: $$\f\circ\f^{\textcircled\#}\circ\f=\f\circ\f^{\dagger}\circ\f=\f. $$ The second condition is proven in the same way that the reasoning above \eqref{eq: fcircffcirc2}, substituting in $\f^{\textcircled\#}=\f\circ\f^{\dagger}\circ\f^{\textcircled\#}$ the equality: $\f\circ\f^{\textcircled\#}=\f\circ\f^{\dagger},$ one gets: $$\f\circ (\f^{\textcircled\#})^2=\f^{\textcircled\#}. $$
Finally, the third condition is a direct consequence of the definition of the Moore-Penrose inverse, to wit: $$(\f\circ\f^{\textcircled\#})^*=(\f\circ\f^{\dagger})^*=\f\circ\f^{\dagger}=\f\circ\f^{\textcircled\#}.$$
Finally, if there is any endomorphism $\widehat{\f}$ satisfying the three conditions on the statement, let us check that Definition \ref{D:core-inver-fp93735} holds for him. From condition $II)$ we directly deduce that: $$\Img(\widehat{\f})=\Img(\f\circ(\widehat{\f})^2)\subseteq \Img(\f);$$ From $I)$ we deduce that $$(\f\circ\widehat{\f})_{\vert_{\Img(\f)}}=Id_{\vert_{\Img(\f)}}=(\f\circ\f^{\dagger})_{\vert_{\Img(\f)}}. $$ From $III),$ it is clear that: $$ (\f\circ\widehat{\f})^*=(\widehat{\f})^*\circ \f^*=\f\circ\widehat{\f}.$$ Therefore, $$(\f\circ\widehat{\f})_{\vert_{[\Img(\f)]^{\perp}}}=(\f\circ\widehat{\f})_{\vert_{\Ker(\f^*)}}=((\widehat{\f})^*\circ\f^*)_{\vert_{\Ker(\f^*)}}=0=(\f\circ\f^{\dagger})_{\vert_{[\Img(\f)]^{\perp}}}. $$ Adding all up: $$\f\circ\widehat{\f}=\f\circ\f^{\dagger}, $$ and both conditions of Definition \ref{D:core-inver-fp93735} hold. Once proven the equivalence with Definition \ref{D:core-inver-fp93735} uniqueness is proven in the same way as in Corollary \ref{C: fcircledfpdeind1}.
\end{proof}

Let us calculate the Core Inverse of the Moore-Penrose inverse of a bounded finite potent endomorphism of index lesser of equal than one.

\begin{prop}\label{P: CoreoftheMP}
If $\f\in \mathcal{B}_{fp}(\h)$ with $i(\f)\leq 1,$ then: $$(\f^{\dagger})^{\textcircled\#}=(\f^{\dagger})^{\#}\circ P_{\Img(\f^{\dagger})}. $$
\end{prop}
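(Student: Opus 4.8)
The plan is to reduce the statement to the already-established algebraic characterization of the Core Inverse (Theorem \ref{T: AlgDefCore-Inver}) applied to the operator $\f^{\dagger}$ in place of $\f$. For this to be legitimate I first need that $\f^{\dagger}$ is itself a bounded finite potent endomorphism of index lesser or equal than one, so that its Core Inverse exists and the algebraic formula applies; this is exactly the content of Proposition \ref{P: MPdeind1}, so I would invoke it immediately. Once that is in hand, Theorem \ref{T: AlgDefCore-Inver} gives $(\f^{\dagger})^{\textcircled\#}=(\f^{\dagger})^{\#}\circ\f^{\dagger}\circ(\f^{\dagger})^{\dagger}$, and since $\f^{\dagger}$ is admissible for the Moore-Penrose inverse with $(\f^{\dagger})^{\dagger}=\f$ (recall the property $(\f^{\dagger})^{\dagger}=\f$ from Subsection \ref{ss: Moore-Penrose}), the middle-and-right portion $\f^{\dagger}\circ(\f^{\dagger})^{\dagger}=\f^{\dagger}\circ\f$ is precisely the orthogonal projection $P_{\Img(\f^{\dagger})}$.

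The key computation is therefore to identify $\f^{\dagger}\circ\f$ with $P_{\Img(\f^{\dagger})}$. Here I would use the property recorded in Subsection \ref{ss: Moore-Penrose} that $\f^{\dagger}\circ\f=P_{[\Ker(\f)]^{\perp}}$, together with the identity $\Img(\f^{\dagger})=[\Ker(\f)]^{\perp}$. The latter follows from the explicit description of $\f^{\dagger}$ (its image is $[\Ker(\f)]^{\perp}$, on which $\f$ restricts to an isomorphism), or alternatively from $\Img(\f^{\dagger})=\Img(\f^{*})$ combined with $\Img(\f^{*})=[\Ker(\f)]^{\perp}$. Thus $\f^{\dagger}\circ(\f^{\dagger})^{\dagger}=\f^{\dagger}\circ\f=P_{[\Ker(\f)]^{\perp}}=P_{\Img(\f^{\dagger})}$, which delivers exactly the asserted formula $(\f^{\dagger})^{\textcircled\#}=(\f^{\dagger})^{\#}\circ P_{\Img(\f^{\dagger})}$.

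The main obstacle, such as it is, lies not in any deep estimate but in bookkeeping the correct substitutions: one must be careful that applying Theorem \ref{T: AlgDefCore-Inver} to $\f^{\dagger}$ requires knowing both that $\f^{\dagger}$ has index at most one (Proposition \ref{P: MPdeind1}) and that it is admissible for the Moore-Penrose inverse, the latter being guaranteed because $\f^{\dagger}$ has finite-dimensional image (its index is at most one) and hence closed image, by Lemma \ref{R: FPileq1isAdmissible}. I would state these two prerequisites explicitly at the outset. After that the argument is a one-line chain of identifications, and no genuine difficulty remains; the result can be regarded as the ``dual'' companion to Remark \ref{R: fcorecore fcoredagger}, obtained by the same mechanism but with $\f$ replaced by $\f^{\dagger}$.
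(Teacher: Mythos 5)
Your proof is correct, but it follows a genuinely different route from the paper's. Both arguments start the same way, invoking Proposition \ref{P: MPdeind1} so that $\f^{\dagger}$ is a bounded finite potent endomorphism with $i(\f^{\dagger})\leq 1$ (and hence, by Lemma \ref{R: FPileq1isAdmissible} and Lemma \ref{L: Adsii ImgCer}, admissible for the Moore-Penrose inverse). From there, however, the paper applies the \emph{geometric} characterization (Theorem \ref{T: GeoCaractCoreInv}) to $\f^{\dagger}$: it writes $(\f^{\dagger})^{\textcircled\#}$ piecewise as $\bigl((\f^{\dagger})_{\vert_{\Img(\f^{\dagger})}}\bigr)^{-1}$ on $\Img(\f^{\dagger})$ and $0$ on $[\Img(\f^{\dagger})]^{\perp}$, then uses $[\Img(\f^{\dagger})]^{\perp}=[\Img(\f^{*})]^{\perp}=\Ker(\f)$ and the explicit form of the Moore-Penrose inverse to recognize this map as $(\f^{\dagger})^{\#}\circ P_{\Img(\f^{\dagger})}$. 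You instead apply the \emph{algebraic} characterization (Theorem \ref{T: AlgDefCore-Inver}) to $\f^{\dagger}$, obtaining $(\f^{\dagger})^{\textcircled\#}=(\f^{\dagger})^{\#}\circ\f^{\dagger}\circ(\f^{\dagger})^{\dagger}$, and then collapse the tail via the standard identities $(\f^{\dagger})^{\dagger}=\f$, $\f^{\dagger}\circ\f=P_{[\Ker(\f)]^{\perp}}$ and $\Img(\f^{\dagger})=[\Ker(\f)]^{\perp}$, all of which are available in Subsection \ref{ss: Moore-Penrose} and in the proof of Proposition \ref{P: MPdeind1}. Your chain of operator identities is shorter and avoids any piecewise computation; the paper's geometric route costs a little more bookkeeping but yields as a byproduct the explicit action of $(\f^{\dagger})^{\textcircled\#}$ (namely $\f_{\vert_{[\Ker(\f)]^{\perp}}}$ on $\Img(\f^{\dagger})$ and $0$ on $\Ker(\f)$), which is precisely what is reused later in proving the equivalence $IV)$ of Theorem \ref{T: fEpyequiv}. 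Both arguments are complete and rest on the same prerequisites.
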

\begin{proof}
Firstly notice that this statement makes sense due to Proposition \ref{P: MPdeind1}. By direct computation:
$$(\f^{\dagger})^{\textcircled\#}(v)= \left \{ \begin{array}{ccl} ((\f^{\dagger})_{\vert_{\Img(\f^{\dagger})}})^{-1}(v) & \text{ if } & v\in\Img(\f^{\dagger}) \smallskip\\  0 & \text{ if } & v\in [\Img(\f^{\dagger})]^{\perp}\end{array} \right . ,$$ and bearing in mind the characterization of the Moore-Penrose inverse and that $[\Img(\f^{\dagger})]^{\perp}=[\Img(\f^*)]^{\perp}=\Ker(\f),$ this can be expressed as: $$(\f^{\dagger})^{\textcircled\#}(v)= \left \{ \begin{array}{ccl} ((\f)_{\vert_{[\Ker(\f)]^{\perp}}})(v) & \text{ if } & v\in\Img(\f^{\dagger}) \smallskip\\  0 & \text{ if } & v\in \Ker(\f) \end{array} \right . ,$$ from where the claim is deduced.
\end{proof}

\begin{prop}
Given $\f \in \mathcal{B}_{fp}(\h)$ with $i(\f)\leq 1,$ then: $$\f^{\textcircled\#}=\f^*  \text{ if and only if }  \f\circ\f^*\circ\f=\f \text{ and } \f \text{ is EP }.$$
\end{prop}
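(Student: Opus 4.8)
The plan is to prove both implications by exploiting the characterizations already established for the Core Inverse and the Moore-Penrose inverse of a finite potent endomorphism of index $\leq 1$. Recall that in this setting $V=\Img(\f)\oplus\Ker(\f)=\Img(\f)\oplus[\Img(\f)]^{\perp}$ is no longer guaranteed, the key geometric obstruction being whether $\Ker(\f)$ coincides with $[\Img(\f)]^{\perp}$; by Lemma \ref{L: fEPsifgroupesfcirc} and the EP characterizations this is exactly what being EP controls. So I expect the EP hypothesis to be the natural bridge between the geometric description of $\f^{\textcircled\#}$ and the operator-theoretic description of $\f^*$.

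For the forward implication, I would assume $\f^{\textcircled\#}=\f^*$. First, taking images gives $\Img(\f^*)=\Img(\f^{\textcircled\#})=\Img(\f)$ by the geometric characterization (Theorem \ref{T: GeoCaractCoreInv}), so $\f$ is EP by definition (equivalently by Corollary \ref{L: CoreisEP}-type reasoning, since $\Img(\f)=\Img(\f^*)=[\Ker(\f)]^{\perp}$). Once EP is secured, we have $[\Img(\f)]^{\perp}=\Ker(\f)$ and Lemma \ref{L: fEPsifgroupesfcirc} yields $\f^{\textcircled\#}=\f^{\#}$, so the hypothesis becomes $\f^{\#}=\f^*$. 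Then I would simply compose: from $\f\circ\f^{\#}\circ\f=\f$ one gets $\f\circ\f^*\circ\f=\f$ by substituting $\f^{\#}=\f^*$, which is precisely the claimed identity. The main point to be careful about here is the order of deduction: EP must be extracted before invoking Lemma \ref{L: fEPsifgroupesfcirc}, since that lemma presupposes nothing but gives the equivalence $\f^{\textcircled\#}=\f^{\#}\iff\f$ EP.

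For the converse, I would assume $\f\circ\f^*\circ\f=\f$ together with $\f$ EP, and show $\f^{\textcircled\#}=\f^*$. Since $\f$ is EP, Lemma \ref{L: fEPsifgroupesfcirc} gives $\f^{\textcircled\#}=\f^{\#}$, so it suffices to prove $\f^{\#}=\f^*$ under the condition $\f\circ\f^*\circ\f=\f$. Using $\f$ EP we have $\Img(\f)=[\Ker(\f)]^{\perp}$ and $\Ker(\f)=[\Img(\f)]^{\perp}=\Ker(\f^*)$, and the relation $\f\circ\f^*\circ\f=\f$ restricted to the decomposition $V=\Img(\f)\oplus\Ker(\f)$ forces $\f\circ\f^*$ to act as the identity on $\Img(\f)$. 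Comparing this with the geometric descriptions $\f^{\#}(v)=(\f_{\vert_{\Img(\f)}})^{-1}(v)$ on $\Img(\f)$ and $\f^{\#}\equiv 0$ on $\Ker(\f)$, and checking that $\f^*$ sends $\Ker(\f)=\Ker(\f^*)$ to $0$ while inverting $\f$ on $\Img(\f)$, gives $\f^*=\f^{\#}=\f^{\textcircled\#}$.

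The hard part will be making the converse fully rigorous: the identity $\f\circ\f^*\circ\f=\f$ only directly controls $\f^*$ up to the kernel of $\f$ on the left, so I must verify independently that $\f^*$ vanishes on the correct complement and genuinely inverts $\f_{\vert_{\Img(\f)}}$ rather than merely being a one-sided inverse. The EP hypothesis is exactly what closes this gap, since it aligns $\Ker(\f^*)$ with $[\Img(\f)]^{\perp}$ and $\Img(\f^*)$ with $\Img(\f)$, letting the restriction of $\f\circ\f^*\circ\f=\f$ to the $\f$-invariant finite-dimensional piece $W_{\f}=\Img(\f)$ be inverted. I would therefore organize the converse around these two invariant subspaces, treating $\Img(\f)$ and $\Ker(\f)$ separately, so that the agreement $\f^*=\f^{\#}$ is checked pointwise on each summand.
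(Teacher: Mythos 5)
Your proposal is correct and takes essentially the same route as the paper: the forward implication extracts EP from $\Img(\f^*)=\Img(\f^{\textcircled\#})=\Img(\f)$ and obtains $\f\circ\f^*\circ\f=\f$ from the $\{1\}$-inverse property, while the converse compares $\f^*$ with the core inverse pointwise on the decomposition $V=\Img(\f)\oplus\Ker(\f)=\Img(\f)\oplus[\Img(\f)]^{\perp}$ that EP makes available, using exactly the alignment $\Img(\f^*)=\Img(\f)$, $\Ker(\f^*)=[\Img(\f)]^{\perp}$ to show $\f^*$ inverts $\f_{\vert_{\Img(\f)}}$ and vanishes on the complement. Your detour through the group inverse via Lemma \ref{L: fEPsifgroupesfcirc} is only a cosmetic difference from the paper's direct comparison with the geometric characterization of Theorem \ref{T: GeoCaractCoreInv}, since under EP the two inverses coincide and the underlying verification is identical.
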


\begin{proof}
If $\f^{\textcircled\#}=\f^*$ then as $\f\circ\f^{\textcircled\#}\circ\f=\f$ the first claim is clear. Moreover, $\Img(\f^{\textcircled\#})=\Img(\f)=\Img(\f^*).$ Conversely, as $\f\circ\f^*\circ\f=\f$ and $i(\f)\leq 1,$ one has that: $$(\f^*)_{\vert_{\Img(\f)}}=(\f_{\vert_{\Img(\f)}})^{-1}=(\f^{\textcircled\#})_{\vert_{\Img(\f)}}, $$ by Theorem \ref{T: GeoCaractCoreInv}. As $\Ker(\f^*)=[\Img(\f)]^{\perp}$ we conclude that: $$ (\f^*)_{\vert_{[\Img(\f)]^{\perp}}}=0=(\f^{\textcircled\#})_{\vert_{[\Img(\f)]^{\perp}}}. $$ Therefore, we obtain that $\f^*=\f^{\textcircled\#}$ as desired.
\end{proof}

To conclude this section, let us generalize \cite[Theorem 3]{BakTren} to our case.
\begin{thm}\label{T: fEpyequiv}
If $\f\in \mathcal{B}_{p}(\h)$ with $i(\f)\leq 1,$ then the following are equivalent: \begin{itemize}
\item[I.)]$\f$ is EP \smallskip ;
\item[II.)]$(\f^{\textcircled\#})^{\textcircled\#}=\f$\smallskip ;
\item[III.)]$\f^{\textcircled\#}\circ\f=\f\circ\f^{\textcircled\#}$\smallskip ;
\item[IV.)]$(\f^{\dagger})^{\textcircled\#}=\f$ \smallskip ;
\item[V.)]$(\f^{\textcircled\#})^{\dagger}=(\f^{\dagger})^{\dagger}.$
\end{itemize}
\end{thm}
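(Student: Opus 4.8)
The plan is to prove a cycle of implications, say $I.) \Rightarrow II.) \Rightarrow III.) \Rightarrow I.)$ and separately link $IV.)$ and $V.)$ into the chain, exploiting the geometric characterization (Theorem \ref{T: GeoCaractCoreInv}) together with the explicit formulas for $\f^{\textcircled\#}$, $(\f^{\textcircled\#})^{\textcircled\#}$, and $(\f^{\textcircled\#})^{\dagger}$ established in Corollaries \ref{C: MPdela CoreInv} and \ref{C: CoreofCore}. Throughout, the governing observation is that, for a finite potent endomorphism with $i(\f)\leq 1$, being EP is equivalent to $\Ker(\f)=[\Img(\f)]^{\perp}$, i.e. to the orthogonality of the AST-decomposition $V=\Img(\f)\oplus\Ker(\f)$ coinciding with $V=\Img(\f)\oplus[\Img(\f)]^{\perp}$. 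This is exactly the condition that was used in Lemma \ref{L: fEPsifgroupesfcirc} and in $III.)$ of Proposition \ref{P: PropertiesofCoreInv}, and it is what makes the core inverse, the group inverse, and the Moore-Penrose inverse collapse onto each other.

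First I would establish $I.) \Rightarrow III.)$: if $\f$ is EP then $[\Img(\f)]^{\perp}=\Ker(\f)$, so by Lemma \ref{L: fEPsifgroupesfcirc} we have $\f^{\textcircled\#}=\f^{\#}$, and since the group inverse commutes with $\f$ by definition, $\f^{\textcircled\#}\circ\f=\f^{\#}\circ\f=\f\circ\f^{\#}=\f\circ\f^{\textcircled\#}$. For $III.) \Rightarrow I.)$, I would compute both composites on the decomposition $V=\Img(\f)\oplus[\Img(\f)]^{\perp}$ using the geometric formula: on $\Img(\f)$ both $\f^{\textcircled\#}\circ\f$ and $\f\circ\f^{\textcircled\#}$ act as the identity, so the content is on $[\Img(\f)]^{\perp}$, where $\f\circ\f^{\textcircled\#}=0$ forces $\f^{\textcircled\#}\circ\f=0$ on that subspace as well; tracking the ranges of these projections and comparing them forces $\Ker(\f)=[\Img(\f)]^{\perp}$, which is EP. Then $I.) \Leftrightarrow II.)$ follows cleanly from Corollary \ref{C: CoreofCore}, which gives $(\f^{\textcircled\#})^{\textcircled\#}=(\f^{\textcircled\#})^{\dagger}=\f\circ P_{\Img(\f)}$ by Remark \ref{R: fcorecore fcoredagger}; this equals $\f$ precisely when $\f\circ P_{\Img(\f)}=\f$, i.e. when $\f$ vanishes on $[\Img(\f)]^{\perp}$, which again says $[\Img(\f)]^{\perp}\subseteq\Ker(\f)$, equivalently $\Ker(\f)=[\Img(\f)]^{\perp}$ since the reverse inclusion is automatic under $i(\f)\leq 1$, hence EP.

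For the remaining two equivalences I would handle $IV.)$ via Proposition \ref{P: CoreoftheMP}, which gives $(\f^{\dagger})^{\textcircled\#}=(\f^{\dagger})^{\#}\circ P_{\Img(\f^{\dagger})}$, or more directly via its geometric description: $(\f^{\dagger})^{\textcircled\#}$ acts as $\f$ on $\Img(\f^{\dagger})=\Img(\f^*)=[\Ker(\f)]^{\perp}$ and as $0$ on $\Ker(\f)$. This coincides with $\f$ exactly when $[\Ker(\f)]^{\perp}=\Img(\f)$ (so that the domain of the nonzero branch matches the image of $\f$) and the zero branch is consistent, which is once more the EP condition $\Img(\f)=\Img(\f^*)$; thus $I.)\Leftrightarrow IV.)$. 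Finally, $V.)$ reads $(\f^{\textcircled\#})^{\dagger}=(\f^{\dagger})^{\dagger}=\f$ using $(\f^{\dagger})^{\dagger}=\f$ from Section \ref{ss: Moore-Penrose}; by Remark \ref{R: fcorecore fcoredagger} the left side equals $\f\circ P_{\Img(\f)}$, so $V.)$ is literally the same equation $\f\circ P_{\Img(\f)}=\f$ that characterized $II.)$, giving $II.)\Leftrightarrow V.)$ and closing the cycle.

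The main obstacle I anticipate is the careful bookkeeping in $III.) \Rightarrow I.)$: commutativity of $\f^{\textcircled\#}\circ\f$ and $\f\circ\f^{\textcircled\#}$ is a statement about equality of two projection-like operators, and one must argue that their agreement forces the two complementary decompositions to coincide rather than merely share the summand $\Img(\f)$. I would make this rigorous by noting that $\f\circ\f^{\textcircled\#}=\f\circ\f^{\dagger}=P_{\Img(\f)}$ is the \emph{orthogonal} projection onto $\Img(\f)$, whereas $\f^{\textcircled\#}\circ\f=\f^{\#}\circ\f$ is the \emph{oblique} projection onto $\Img(\f)$ along $\Ker(\f)$; these two projections onto the same subspace are equal if and only if their kernels agree, that is $[\Img(\f)]^{\perp}=\Ker(\f)$, which is EP. Everything else reduces to substituting the explicit formulas already proved and reading off the orthogonality condition, so the proof should be short once this identification of oblique versus orthogonal projections is in place.
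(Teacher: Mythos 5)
Your proposal is correct in substance and follows the paper's overall strategy --- reduce every item to the orthogonality condition $\Ker(\f)=[\Img(\f)]^{\perp}$ by means of the explicit formulas $\f\circ\f^{\textcircled\#}=P_{\Img(\f)}$, $(\f^{\textcircled\#})^{\textcircled\#}=(\f^{\textcircled\#})^{\dagger}=\f\circ P_{\Img(\f)}$ and Proposition \ref{P: CoreoftheMP} --- but two of your local arguments genuinely differ from the paper's, and both gain in explicitness. For $III)\Rightarrow I)$ the paper simply cites Lemma \ref{L: fEPsifgroupesfcirc} (the implicit argument being that commutativity together with $\f^{\textcircled\#}\in X_{\f}(1,2)$ makes $\f^{\textcircled\#}$ a group inverse, whence $\f^{\textcircled\#}=\f^{\#}$ by uniqueness of the group inverse); you instead observe that $\f\circ\f^{\textcircled\#}=P_{\Img(\f)}$ is the orthogonal projection onto $\Img(\f)$ while $\f^{\textcircled\#}\circ\f=\f^{\#}\circ\f$ (Corollary \ref{C: PropAlge}) is the oblique projection onto $\Img(\f)$ along $\Ker(\f)$, and that two idempotents with the same image coincide exactly when their kernels do --- a cleaner geometric route that fills in a step the paper leaves terse. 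Likewise you wire $V)$ to $II)$ via $(\f^{\dagger})^{\dagger}=\f$, so that both items become the single equation $\f\circ P_{\Img(\f)}=\f$; the paper instead links $V)$ to $IV)$ through the expressions of Corollary \ref{C: MPdela CoreInv} and Proposition \ref{P: CoreoftheMP}, and your version is the more transparent of the two. One justification does need repair: in your treatment of $II)$ you assert that the inclusion $\Ker(\f)\subseteq[\Img(\f)]^{\perp}$ is ``automatic'' when $i(\f)\leq 1$ --- it is not (that inclusion is precisely the EP condition). The correct argument is that under $i(\f)\leq 1$ both $\Ker(\f)$ and $[\Img(\f)]^{\perp}$ are linear complements of $\Img(\f)$ in $V$, and if one complement of a subspace is contained in another they must be equal; hence the one-sided inclusion $[\Img(\f)]^{\perp}\subseteq\Ker(\f)$ already forces equality. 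The same care should back up your parenthetical in $IV)$, where the solid core --- as in the paper --- is the comparison of images, $\Img\bigl((\f^{\dagger})^{\textcircled\#}\bigr)=\Img(\f^{\dagger})=\Img(\f^{*})$ versus $\Img(\f)$, which forces $\Img(\f)=\Img(\f^{*})$.
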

\begin{proof}
Firstly, let us check that $I)$ implies $II).$ As $\f$ is EP, then $\f\circ\f^{\dagger}=\f^{\dagger}\circ\f.$ Therefore, $$\f\circ P_{\Img(\f)}=\f\circ (\f\circ\f^{\dagger})=\f\circ(\f^{\dagger}\circ\f)=\f$$ and we conclude by Corollary \ref{C: CoreofCore}.\\ Conversely, let us see that $II)$ implies $I).$ By the expression obtained in Corollary \ref{C: CoreofCore}, it is clear that $(\f^{\textcircled\#})^{\textcircled\#}=\f$ if and only if $[\Img(\f)]^{\perp}=\Ker(\f),$ this is, if $\Ker(\f^*)=\Ker(\f),$ which is equivalent to $\f$ being EP.\\
That $I)$ occurs if and only if $III)$ occurs was proved in Lemma \ref{L: fEPsifgroupesfcirc}.\\
To see that $I)$ happens if and only if $IV)$ recall that $\Img(\f^{\dagger})=\Img(\f^*).$ By the expression obtained in Proposition \ref{P: CoreoftheMP}, one deduces the equivalence occurs if and only if $\Img(\f^*)=\Img(\f).$\\
Finally, on a similar fashion to previous equivalences, $IV)$ if and only if $V)$ is deduced bearing in mind the expressions obtained both in Corollary \ref{C: MPdela CoreInv} and Proposition \ref{P: CoreoftheMP}. The equivalence happens if and only if $[\Img(\f)]^{\perp}=\Ker(\f^*),$ this is $\Ker(\f)=\Ker(\f^*).$
\end{proof}

\bigskip

\section{Core-Partial order for finite potent endomorphisms}\label{s: Core Partial Order}

Let $(V,g)$ be an inner product arbitrary vector space over $k=\R$ or $k=\C.$ Henceforth, we will denote by $\ed_k^{fp}(V)^{\leq 1}$ the set of finite potent endomorphisms on $V$ of index lesser or equal than $1.$

\begin{defn}\label{D: CorePartialOrder}
Let $\f,\g \in \ed_k^{fp}(V)^{\leq 1}.$ We will say that $\f$ is under $\g$ for the core order, and it will be denoted by $\f \leq^{\textcircled\#}\g ,$ when: \begin{align*}
\f\circ\f^{\textcircled\#}& =\g\circ\f^{\textcircled\#}\smallskip \\
\f^{\textcircled\#}\circ\f &=\f^{\textcircled\#}\circ\g.
\end{align*}
\end{defn}

\begin{lem}\label{L: CoreySpace}
Let $\f,\g \in \ed_k^{fp}(V)^{\leq 1}.$ If $\f {\leq}^{\textcircled\#}\g$ then: \begin{itemize}
\item[•]$\Img(\f)\subseteq \Img(\g);$ \smallskip 
\item[•]$\Ker(\g)\subseteq \Ker(\f).$
\end{itemize}
\end{lem}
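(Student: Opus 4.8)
The plan is to read off each of the two claimed inclusions from one of the two defining equalities of the core order, after identifying the compositions $\f\circ\f^{\textcircled\#}$ and $\f^{\textcircled\#}\circ\f$ with explicit projections. The two facts I would invoke are: first, that by Definition \ref{D:core-inver-fp93735} one has $\f\circ\f^{\textcircled\#}=P_{\Img(\f)}$, the orthogonal projection whose image is exactly $\Img(\f)$; and second, that by the last item of Corollary \ref{C: PropAlge} one has $\f^{\textcircled\#}\circ\f=\f^{\#}\circ\f$, which (since $i(\f)\le 1$, so that $V=\Img(\f)\oplus\Ker(\f)$) is the idempotent projecting onto $\Img(\f)$ along $\Ker(\f)$, and whose kernel is therefore exactly $\Ker(\f)$.

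For the image inclusion I would use the first defining equality $\f\circ\f^{\textcircled\#}=\g\circ\f^{\textcircled\#}$ and simply compare images. The left-hand side equals $P_{\Img(\f)}$, whose image is $\Img(\f)$; the right-hand side $\g\circ\f^{\textcircled\#}$ has image contained in $\Img(\g)$. Since the two operators coincide, their images coincide, which forces $\Img(\f)\subseteq\Img(\g)$.

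For the kernel inclusion I would use the second defining equality $\f^{\textcircled\#}\circ\f=\f^{\textcircled\#}\circ\g$ and compare kernels. The left-hand side, being the projection along $\Ker(\f)$, has kernel exactly $\Ker(\f)$; the kernel of the right-hand side $\f^{\textcircled\#}\circ\g$ visibly contains $\Ker(\g)$, since $\g(v)=0$ implies $\f^{\textcircled\#}(\g(v))=0$. Equality of the two operators then yields $\Ker(\g)\subseteq\Ker(\f)$.

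\textbf{The main obstacle} is minor: the only point that is not completely immediate is the justification that $\f^{\textcircled\#}\circ\f$ is precisely the projection with kernel $\Ker(\f)$. Once the identity $\f^{\textcircled\#}\circ\f=\f^{\#}\circ\f$ is invoked, this reduces to the standard fact that for $i(\f)\le 1$ the idempotent $\f^{\#}\circ\f$ realizes the splitting $V=\Img(\f)\oplus\Ker(\f)$, which I would confirm by a one-line decomposition argument: writing $v=w+u$ with $w\in\Img(\f)$ and $u\in\Ker(\f)$ gives $\f^{\#}\circ\f(v)=w$, so the kernel is exactly $\Ker(\f)$. Note that the conclusion is precisely the assertion $\f<^s\g$ for the space pre-order of Definition \ref{D: SpacePreOrd}.
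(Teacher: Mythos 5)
Your proposal is correct and takes essentially the same approach as the paper: both proofs read the image inclusion off the equality $\f\circ\f^{\textcircled\#}=\g\circ\f^{\textcircled\#}$ and the kernel inclusion off $\f^{\textcircled\#}\circ\f=\f^{\textcircled\#}\circ\g$, using that $\Img(\f\circ\f^{\textcircled\#})=\Img(\f)$ and $\Ker(\f^{\textcircled\#}\circ\f)=\Ker(\f)$. The only (immaterial) difference is that you justify these two identities by identifying the compositions as explicit projections, whereas the paper cites Corollary \ref{C: PropAlge} to get $\f^{\textcircled\#}\in X_{\f}(1)$ and then invokes the general fact that any $\{1\}$-inverse $\f^{-}$ satisfies $\Img(\f)=\Img(\f\circ\f^{-})$ and $\Ker(\f)=\Ker(\f^{-}\circ\f)$.
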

\begin{proof}
Recall that for any $\f^{-}\in X_{\f}(1),$ one has that  $\Img(\f)=\Img(\f\circ\f^-)$ and $\Ker(\f)=\Ker(\f^-\circ\f).$
Then, the claims are deduced from Corollary \ref{C: PropAlge} and Definition \ref{D: CorePartialOrder} as: \begin{align*}
\Img(\f)& = \Img(\f\circ\f^{\textcircled\#})=\Img(\g\circ\f^{\textcircled\#})\subseteq \Img(\g); \\
\Ker(\g)& \subseteq \Ker(\f^{\textcircled\#}\circ\g)=\Ker(\f^{\textcircled\#}\circ\f)= \Ker(\f).
\end{align*}
\end{proof}

\begin{cor}
Let $\f,\g \in \ed_k^{fp}(V)^{\leq 1}.$ If $\f {\leq}^{\textcircled\#}\g,$ then $\f<^s \g,$ where $<^s$ denotes the space pre-order (Section \ref{ss: SpacePreord}).
\end{cor}

\begin{thm}\label{T: CaractCore}\textbf{(Characterization of the core order)}.
Let $\f,\g \in \ed_k^{fp}(V)^{\leq 1}.$ If $V=W_{\f}\oplus U_{\f}=\Img(\f)\oplus \Ker(\f)$ is the AST-decomposition of $V,$ then: $$\f \leq^{\textcircled\#}\g \text{ if and only if }\f_{\vert_{\Img(\f)}}=\g_{\vert_{\Img(\f)}} \text{ and } \g_{\vert_{\Ker(\f)}}\subseteq [\Img(\f)]^{\perp}. $$
\end{thm}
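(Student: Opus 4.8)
The plan is to prove both implications by exploiting the explicit geometric description of the Core Inverse from Theorem \ref{T: GeoCaractCoreInv}, together with the two conditions defining $\leq^{\textcircled\#}$ and the fact that $\f^{\textcircled\#}$ acts on $\Img(\f)$ as $(\f_{\vert_{\Img(\f)}})^{-1}$ and vanishes on $[\Img(\f)]^{\perp}$. The key observation I would use repeatedly is that, since $i(\f)\leq 1$, we have the decomposition $V=\Img(\f)\oplus[\Img(\f)]^{\perp}$, and that $\f^{\textcircled\#}\circ\f=\f^{\#}\circ\f$ (Corollary \ref{C: PropAlge}) equals the projection onto $\Img(\f)$ along $\Ker(\f)=U_{\f}$, while $\f\circ\f^{\textcircled\#}=\f\circ\f^{\dagger}=P_{\Img(\f)}$ is the orthogonal projection onto $\Img(\f)$.

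First I would prove the forward implication. Assuming $\f\leq^{\textcircled\#}\g$, consider the second defining equation $\f^{\textcircled\#}\circ\f=\f^{\textcircled\#}\circ\g$. Evaluating both sides and using that $\f^{\textcircled\#}$ restricted to $\Img(\f)$ is the isomorphism $(\f_{\vert_{\Img(\f)}})^{-1}$ (hence injective on $\Img(\f)$) and kills $[\Img(\f)]^{\perp}$, I would deduce that for any $v$ the vectors $\f(v)$ and $\g(v)$ have the same $\Img(\f)$-component; in particular, restricting to $v\in\Img(\f)$, where $\f(v)\in\Img(\f)$, this forces $\f_{\vert_{\Img(\f)}}=\g_{\vert_{\Img(\f)}}$ once I also account for the first equation. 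To obtain $\g_{\vert_{\Ker(\f)}}\subseteq[\Img(\f)]^{\perp}$, I would use the first defining equation $\f\circ\f^{\textcircled\#}=\g\circ\f^{\textcircled\#}$, i.e. $P_{\Img(\f)}=\g\circ\f^{\textcircled\#}$: applying this to vectors in $[\Img(\f)]^{\perp}$ gives information on $\g$, but the cleaner route is to combine the two equations to show that on $\Ker(\f)$ the image under $\g$ is annihilated by $P_{\Img(\f)}=\f\circ\f^{\textcircled\#}$, i.e. lands in $\Ker(\f\circ\f^{\textcircled\#})=[\Img(\f)]^{\perp}$.

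Next I would prove the converse. Assume $\f_{\vert_{\Img(\f)}}=\g_{\vert_{\Img(\f)}}$ and $\g(\Ker(\f))\subseteq[\Img(\f)]^{\perp}$. I verify the two defining equations directly by checking them on the two summands $\Img(\f)$ and $[\Img(\f)]^{\perp}=\Ker(\f^{\textcircled\#})$. For the first equation $\f\circ\f^{\textcircled\#}=\g\circ\f^{\textcircled\#}$: both sides vanish on $[\Img(\f)]^{\perp}$ since $\f^{\textcircled\#}$ does; on $\Img(\f)$, $\f^{\textcircled\#}$ maps into $\Img(\f)$, where $\f$ and $\g$ agree by hypothesis, so the two compositions coincide. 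For the second equation $\f^{\textcircled\#}\circ\f=\f^{\textcircled\#}\circ\g$, I split $V=\Img(\f)\oplus\Ker(\f)$ (the AST-decomposition): on $\Img(\f)$ the maps $\f$ and $\g$ agree so the compositions agree; on $\Ker(\f)$ the left side is $0$, while on the right side $\g$ sends $\Ker(\f)$ into $[\Img(\f)]^{\perp}$, which $\f^{\textcircled\#}$ annihilates, so that side is $0$ too.

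The main obstacle I anticipate is the bookkeeping in the forward direction, specifically disentangling the two defining equations to extract both geometric conditions without circularity. The subtlety is that one equation uses the decomposition $V=\Img(\f)\oplus\Ker(\f)$ (the algebraic AST-decomposition, relevant for $\f^{\textcircled\#}\circ\f$) while the other naturally uses the orthogonal decomposition $V=\Img(\f)\oplus[\Img(\f)]^{\perp}$ (relevant for $\f\circ\f^{\textcircled\#}=P_{\Img(\f)}$); keeping straight which projection is orthogonal and which is oblique, and showing the $\g$-image of $\Ker(\f)$ lands in the orthogonal complement rather than merely in $\Ker(\f)$, is where care is needed. I would manage this by consistently composing with $\f^{\textcircled\#}$ and $\f$ and reading off kernels and images via $\Ker(\f\circ\f^{\textcircled\#})=[\Img(\f)]^{\perp}$ and $\Img(\f^{\textcircled\#})=\Img(\f)$.
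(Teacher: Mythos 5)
Your proposal is correct and takes essentially the same route as the paper's own proof: both rely on the geometric description of $\f^{\textcircled\#}$ (namely $(\f^{\textcircled\#})_{\vert_{\Img(\f)}}=(\f_{\vert_{\Img(\f)}})^{-1}$, $\Ker(\f^{\textcircled\#})=[\Img(\f)]^{\perp}$, $\Img(\f^{\textcircled\#})=\Img(\f)$), and both verify the two defining equations of the order by splitting $V$ as $\Img(\f)\oplus[\Img(\f)]^{\perp}$ for $\f\circ\f^{\textcircled\#}=\g\circ\f^{\textcircled\#}$ and as $\Img(\f)\oplus\Ker(\f)$ for $\f^{\textcircled\#}\circ\f=\f^{\textcircled\#}\circ\g$. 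The only (cosmetic) difference is in the forward direction, where the paper reads off $\f_{\vert_{\Img(\f)}}=\g_{\vert_{\Img(\f)}}$ from the first equation alone and $\g(\Ker(\f))\subseteq[\Img(\f)]^{\perp}$ from the second alone, while you interleave the two equations; your steps still go through.
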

\begin{proof}
Notice that if $$\f\circ\f^{\textcircled\#}=\g\circ\f^{\textcircled\#} \text{ then } \f_{\vert_{\Img(\f)}}=\g_{\vert_{\Img(\f)}}.$$ On the other hand, as $(\f^{\textcircled\#}\circ\f)_{\vert_{\Ker(\f)}}=0=(\f^{\textcircled\#}\circ\g)_{\vert_{\Ker(\f)}}$ then $$\g_{\vert_{\Ker(\f)}}\subseteq \Ker(\f^{\textcircled\#})=[\Img(\f)]^{\perp}.$$ Conversely, let us suppose that both conditions on the statement hold and let us prove that the definition of the core order is satisfied. On one side, $$(\f\circ\f^{\textcircled\#})_{\vert_{[\Img(\f)]^{\perp}}}=0=(\g\circ\f^{\textcircled\#})_{\vert_{[\Img(\f)]^{\perp}}}$$ as $\Ker(\f^{\textcircled\#})=[\Img(\f)]^{\perp}.$ Now, as $\f_{\vert_{\Img(\f)}}\in \aut_{k}(\Img(\f)),$ then $$\g\circ(\f_{\vert_{\Img(\f)}})^{-1}=\f\circ(\f_{\vert_{\Img(\f)}})^{-1}. $$ Therefore,  $$(\f\circ\f^{\textcircled\#})_{\vert_{\Img(\f)}}=0=(\g\circ\f^{\textcircled\#})_{\vert_{\Img(\f)}}$$ and it is $$\f\circ\f^{\textcircled\#}=\g\circ\f^{\textcircled\#}. $$
 Moreover, now, $(\f^{\textcircled\#}\circ\f)_{\vert_{\Img(\f)}}=(\f^{\textcircled\#}\circ\g)_{\vert_{\Img(\f)}}.$ As $\g_{\vert_{\Ker(\f)}}\subseteq [\Img(\f)]^{\perp}=\Ker(\f^{\textcircled\#})$ then $$(\f^{\textcircled\#}\circ\g)_{\vert_{\Ker(\f)}}=0=(\f^{\textcircled\#}\circ\f)_{\vert_{\Ker(\f)}} $$ and therefore, $\f^{\textcircled\#}\circ\f=\f^{\textcircled\#}\circ\g$ as we wanted to prove. 
\end{proof}

\begin{cor}\label{C: GdejaInvLaAST}
Let $\f,\g \in \ed_k^{fp}(V)^{\leq 1}$ such that $\f\leq^{\textcircled\#}\g.$ If $\f$ is EP, then $\g$ leaves the AST-decomposition of $\f$ invariant. 
\end{cor}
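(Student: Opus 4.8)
The plan is to reduce the statement to the $\g$-invariance of the two summands of the AST-decomposition of $V$ induced by $\f$ and then to feed the EP hypothesis into the characterization of the core order. Since $i(\f)\leq 1$, this decomposition is $V=W_{\f}\oplus U_{\f}=\Img(\f)\oplus\Ker(\f)$, so asserting that $\g$ leaves it invariant is exactly the pair of inclusions $\g(\Img(\f))\subseteq\Img(\f)$ and $\g(\Ker(\f))\subseteq\Ker(\f)$. The starting point is Theorem \ref{T: CaractCore}: from $\f\leq^{\textcircled\#}\g$ one immediately extracts the two conditions $\f_{\vert_{\Img(\f)}}=\g_{\vert_{\Img(\f)}}$ and $\g_{\vert_{\Ker(\f)}}\subseteq[\Img(\f)]^{\perp}$.

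The invariance of $\Img(\f)$ requires no use of EP. Because $\Img(\f)=W_{\f}$ and $\f_{\vert_{W_{\f}}}$ is an automorphism of $W_{\f}$ (this is part of the AST-decomposition, as $i(\f)\leq 1$), one has $\f(\Img(\f))=\Img(\f)$; combining this with the first condition $\g_{\vert_{\Img(\f)}}=\f_{\vert_{\Img(\f)}}$ yields $\g(\Img(\f))=\f(\Img(\f))=\Img(\f)\subseteq\Img(\f)$, as desired.

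The invariance of $\Ker(\f)$ is where the EP assumption enters. Recall (as in the proof of Lemma \ref{L: fEPsifgroupesfcirc}) that $\f$ being EP means $\Img(\f)=\Img(\f^*)=[\Ker(\f)]^{\perp}$; since $\Img(\f)$ is closed by Lemma \ref{R: FPileq1isAdmissible}, taking orthogonal complements gives the identification $[\Img(\f)]^{\perp}=\Ker(\f)$. Substituting this into the second condition from Theorem \ref{T: CaractCore} produces $\g_{\vert_{\Ker(\f)}}\subseteq[\Img(\f)]^{\perp}=\Ker(\f)$, that is, $\g(\Ker(\f))\subseteq\Ker(\f)$. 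With both summands shown $\g$-invariant, I would conclude that $\g$ respects $V=\Img(\f)\oplus\Ker(\f)$, which is precisely the claim.

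There is no computational difficulty here; the single conceptual point—and the only place the EP hypothesis is indispensable—is the equality $[\Img(\f)]^{\perp}=\Ker(\f)$. Without EP, Theorem \ref{T: CaractCore} only delivers $\g_{\vert_{\Ker(\f)}}\subseteq[\Img(\f)]^{\perp}$, and in general $[\Img(\f)]^{\perp}$ is strictly larger than $\Ker(\f)$ (indeed $[\Img(\f)]^{\perp}=\Ker(\f^*)$), so $\g$ need not carry $\Ker(\f)$ back into itself. Thus the EP condition is exactly what aligns the orthogonal complement of the range with the kernel, making the second summand $\g$-stable.
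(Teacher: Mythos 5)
Your proof is correct and follows exactly the paper's route: the paper's own (one-line) proof likewise combines Theorem \ref{T: CaractCore} with the identity $[\Img(\f)]^{\perp}=\Ker(\f)$ for EP endomorphisms. You have simply made explicit the details the paper leaves implicit, including the (EP-free) invariance of $\Img(\f)$ via $\f_{\vert_{\Img(\f)}}$ being an automorphism.
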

\begin{proof}
It follows from Theorem \ref{T: CaractCore} and the fact that $[\Img(\f)]^{\perp}=\Ker(\f)$ if $\f$ is EP.
\end{proof}

\begin{lem}\label{L: DescompKer}
Let $\f,\g \in \ed_k^{fp}(V)^{\leq 1}.$ If $\f\leq^{\textcircled\#}\g$ then: $$\Ker(\f)=\Ker(\g)\oplus(\Img(\g)\cap\Ker(\f)).$$
\end{lem}
\begin{proof}
It is an immediate consequence of $\f$ and $\g$ being of index lesser or equal than $1$ and that $\Ker(\g)\subseteq \Ker(\f)$ (recall Lemma \ref{L: CoreySpace}).
\end{proof}

\begin{lem}\label{L: InvImg}
Let $\f,\g,\phi \in \ed_k^{fp}(V)^{\leq 1}.$ If $\f\leq^{\textcircled\#}\g$ and $\g\leq^{\textcircled\#}\phi,$ then $$\f_{\vert_{\Img(\f)}}=\phi_{\vert_{\Img(\f)}}.$$
\end{lem}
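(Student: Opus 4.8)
The plan is to translate both hypotheses through the geometric characterization of the core order (Theorem \ref{T: CaractCore}) and then chain the resulting restriction equalities, using the image inclusion supplied by Lemma \ref{L: CoreySpace} as the bridge between the two.

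First I would apply Theorem \ref{T: CaractCore} to $\f \leq^{\textcircled\#} \g$, obtaining $\f_{\vert_{\Img(\f)}} = \g_{\vert_{\Img(\f)}}$, so that $\f(v) = \g(v)$ for every $v \in \Img(\f)$. Applying the same characterization to $\g \leq^{\textcircled\#} \phi$ yields $\g_{\vert_{\Img(\g)}} = \phi_{\vert_{\Img(\g)}}$, that is, $\g(w) = \phi(w)$ for every $w \in \Img(\g)$. At this stage the two identities live on different subspaces, $\Img(\f)$ and $\Img(\g)$, so they cannot be combined directly.

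The reconciling step is to invoke Lemma \ref{L: CoreySpace} applied to $\f \leq^{\textcircled\#} \g$, which gives the key inclusion $\Img(\f) \subseteq \Img(\g)$. This is precisely what is needed to transport the second restriction identity onto the smaller subspace: although $\g_{\vert_{\Img(\g)}} = \phi_{\vert_{\Img(\g)}}$ only controls $\g$ and $\phi$ on $\Img(\g)$, the inclusion guarantees that every $v \in \Img(\f)$ already lies in $\Img(\g)$, whence $\g(v) = \phi(v)$ holds there as well.

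Finally, for an arbitrary $v \in \Img(\f)$ I would chain the two equalities, $\f(v) = \g(v) = \phi(v)$, where the first equality is the first restriction identity and the second holds because $v \in \Img(\g)$. Since $v$ was arbitrary in $\Img(\f)$, this gives $\f_{\vert_{\Img(\f)}} = \phi_{\vert_{\Img(\f)}}$, as claimed. I do not anticipate a genuine obstacle here; the only point deserving care is recognizing that the inclusion $\Img(\f) \subseteq \Img(\g)$ is exactly the ingredient that makes the identity $\g_{\vert_{\Img(\g)}} = \phi_{\vert_{\Img(\g)}}$ usable on $\Img(\f)$, so that transitivity of agreement-on-the-image goes through.
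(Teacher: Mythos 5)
Your proposal is correct and follows essentially the same route as the paper's proof: both apply Theorem \ref{T: CaractCore} to each of the two hypotheses, then use the inclusion $\Img(\f)\subseteq\Img(\g)$ from Lemma \ref{L: CoreySpace} to restrict the identity $\g_{\vert_{\Img(\g)}}=\phi_{\vert_{\Img(\g)}}$ down to $\Img(\f)$, and finally chain $\f_{\vert_{\Img(\f)}}=\g_{\vert_{\Img(\f)}}=\phi_{\vert_{\Img(\f)}}$. The only difference is expository (you spell out the pointwise argument), so there is nothing to add.
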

\begin{proof}
By Lemma \ref{L: CoreySpace} we know that $\Img(\f)\subseteq \Img(\g).$ By Theorem \ref{T: CaractCore}, as $\g \leq^{\textcircled\#}\g,$ $\phi_{\vert_{\Img(\g)}}=\g_{\vert_{\Img(\g)}}.$ Therefore, as $\f\leq^{\textcircled\#}\g,$ $$\phi_{\vert_{\Img(\f)}}=\g_{\vert_{\Img(\f)}}=\f_{\vert_{\Img(\f)}}$$ and we conclude.
\end{proof}

\begin{thm}\label{T: CoreOrdenParcial}
The core order is a partial order in the set $\ed_k^{fp}(V)^{\leq 1}.$
\end{thm}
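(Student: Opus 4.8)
The plan is to verify the three defining properties of a partial order—reflexivity, antisymmetry, and transitivity—on the set $\ed_k^{fp}(V)^{\leq 1}$, leveraging the characterization established in Theorem \ref{T: CaractCore} so that each property reduces to a statement about restrictions of the operators to $\Img(\f)$ and $\Ker(\f)$. Reflexivity is immediate: for any $\f$, trivially $\f_{\vert_{\Img(\f)}}=\f_{\vert_{\Img(\f)}}$, and $\f_{\vert_{\Ker(\f)}}=0\subseteq [\Img(\f)]^{\perp}$ since $0$ lies in every subspace, so $\f\leq^{\textcircled\#}\f$ by Theorem \ref{T: CaractCore}.

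For antisymmetry, I would suppose $\f\leq^{\textcircled\#}\g$ and $\g\leq^{\textcircled\#}\f$ and show $\f=\g$. From Lemma \ref{L: CoreySpace} applied in both directions I obtain $\Img(\f)=\Img(\g)$ and $\Ker(\f)=\Ker(\g)$. The equality of images together with the condition $\f_{\vert_{\Img(\f)}}=\g_{\vert_{\Img(\f)}}$ from Theorem \ref{T: CaractCore} shows the two operators agree on the common image. On the common kernel $\Ker(\f)=\Ker(\g)$ both operators vanish, since $i(\f),i(\g)\leq 1$ means $\Ker=U_{(\cdot)}$ and the operators act as zero there. Since $V=\Img(\f)\oplus\Ker(\f)$ is the AST-decomposition and $\f$ and $\g$ agree on each summand, I conclude $\f=\g$.

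Transitivity is where the real work lies. Assume $\f\leq^{\textcircled\#}\g$ and $\g\leq^{\textcircled\#}\phi$; I must establish $\f_{\vert_{\Img(\f)}}=\phi_{\vert_{\Img(\f)}}$ and $\phi_{\vert_{\Ker(\f)}}\subseteq[\Img(\f)]^{\perp}$. The first condition is precisely the content of Lemma \ref{L: InvImg}, so that half is done. For the second, I would use Lemma \ref{L: DescompKer} to split $\Ker(\f)=\Ker(\g)\oplus(\Img(\g)\cap\Ker(\f))$ and handle each piece separately. On $\Ker(\g)$: since $\g\leq^{\textcircled\#}\phi$, Theorem \ref{T: CaractCore} gives $\phi_{\vert_{\Ker(\g)}}\subseteq[\Img(\g)]^{\perp}$, and because $\Img(\f)\subseteq\Img(\g)$ (Lemma \ref{L: CoreySpace}) we have $[\Img(\g)]^{\perp}\subseteq[\Img(\f)]^{\perp}$, so $\phi$ sends $\Ker(\g)$ into $[\Img(\f)]^{\perp}$ as required.

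The main obstacle is the remaining piece $\Img(\g)\cap\Ker(\f)$, where I must show $\phi$ lands in $[\Img(\f)]^{\perp}$. Here I would exploit that on $\Img(\g)$ the operator $\phi$ agrees with $\g$ (from $\g\leq^{\textcircled\#}\phi$ via Theorem \ref{T: CaractCore}, giving $\phi_{\vert_{\Img(\g)}}=\g_{\vert_{\Img(\g)}}$), so for $v\in\Img(\g)\cap\Ker(\f)$ one has $\phi(v)=\g(v)$. It then remains to see that $\g$ maps $\Img(\g)\cap\Ker(\f)$ into $[\Img(\f)]^{\perp}$, which should follow from the relation $\f\leq^{\textcircled\#}\g$ together with the decomposition of $\Ker(\f)$; I expect this to require a careful chase through the conditions $\g_{\vert_{\Ker(\f)}}\subseteq[\Img(\f)]^{\perp}$ combined with the interaction between $\Img(\g)$ and the kernel splitting, and this is the step I would scrutinize most closely to make sure no subtlety about the direct-sum decomposition is glossed over.
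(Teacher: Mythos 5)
Your proposal is correct and follows essentially the same route as the paper's proof: the same ingredients (Lemma \ref{L: CoreySpace}, Lemma \ref{L: InvImg}, the kernel splitting of Lemma \ref{L: DescompKer}, and Theorem \ref{T: CaractCore}) and the same key computation $\phi(v)=\g(v)$ on $\Img(\g)\cap\Ker(\f)$, the only cosmetic difference being that you verify the two conditions of Theorem \ref{T: CaractCore} whereas the paper re-derives the defining equalities involving $\f^{\textcircled\#}$. The step you flagged for extra scrutiny closes immediately: since $\Img(\g)\cap\Ker(\f)\subseteq\Ker(\f)$, the condition $\g_{\vert_{\Ker(\f)}}\subseteq[\Img(\f)]^{\perp}$ coming from $\f\leq^{\textcircled\#}\g$ applies directly, so no further chase through the direct-sum decomposition is needed.
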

\begin{proof}
Reflexivity holds directly. In order to prove anti-symmetry, let us consider $\f,\g\in \ed_k^{fp}(V)^{\leq 1}$ such that $\f\leq^{\textcircled\#}\g$ and $\g\leq^{\textcircled\#}\f.$ By Lemma \ref{L: CoreySpace} we know that $\Img(\f)=\Img(\g)$ and $\Ker(\f)=\Ker(\g).$ As both are finite potent endomorphisms of index lesser or equal than $1$ we conclude that $\f=\g.$ \\For transitivity, let us consider $\f,\g,\phi\in \ed_k^{fp}(V)^{\leq 1}$ such that $\f\leq^{\textcircled\#}\g$ and $\g\leq^{\textcircled\#}\phi.$ Then, the same reasoning showed in the proof of the converse of Theorem \ref{T: CaractCore} enables to extend the equality of Lemma \ref{L: InvImg} to the equality: $$\f\circ \f^{\textcircled\#}=\phi\circ\f^{\textcircled\#}.$$ Moreover, again by Lemma \ref{L: InvImg}: $$(\f^{\textcircled\#}\circ\f)_{\vert_{\Img(\f)}}=(\f^{\textcircled\#}\circ\phi)_{\vert_{\Img(\f)}}. $$ Further, by Lemma \ref{L: DescompKer}: $$(\f^{\textcircled\#}\circ\phi)_{\vert_{\Ker(\f)}}=(\f^{\textcircled\#}\circ\phi)_{\vert_{\Ker(\g)\oplus(\Img(\g)\cap\Ker(\f))}}. $$  If $v\in (\Img(\g)\cap\Ker(\f))$ with $v=\g(v'),$ $$\phi(v)=\phi(\g(v'))=\g(\g(v'))=\g(v), $$ so $\f^{\textcircled\#}\phi(v)=\f^{\textcircled\#}\g(v)$ with $v\in \Ker(\f),$ so by Theorem \ref{T: CaractCore}, $$\f^{\textcircled\#}\g(v)=0=\f^{\textcircled\#}\phi(v) $$ and as $(\f^{\textcircled\#}\phi)_{\vert_{\Ker(\g)}}=0$ we deduce that: $(\f^{\textcircled\#}\circ \phi)_{\vert_{\Ker(\f)}}=0=(\f^{\textcircled\#}\circ\f)_{\vert_{\Ker(\f)}}.$ Finally, it is $$\f^{\textcircled\#}\circ\f=\f^{\textcircled\#}\circ\phi $$ and we conclude.
\end{proof}

Following the discussion presented in \cite[Section 3]{BakTren}, we can prove another characterization of the core partial order.

\begin{thm}\label{T: CaractAlgeCPO}
Let $\f,\g \in \ed_k^{fp}(V)^{\leq 1}.$ Then: \begin{itemize}
\item[•]$\f\circ\f^{\textcircled\#}=\g\circ\f^{\textcircled\#}$ if and only if $\f^2=\g\circ\f ;$ \smallskip
\item[•]$\f^{\textcircled\#}\circ\f=\f^{\textcircled\#}\circ\g$ if and only if $\f^{\dagger}\circ\f=\f^{\dagger}\circ\g.$
\end{itemize}
\end{thm}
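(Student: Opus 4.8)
The plan is to prove each of the two biconditionals separately, and in each case I would exploit the explicit geometric description of $\f^{\textcircled\#}$ from Theorem \ref{T: GeoCaractCoreInv} together with the fact that $\f^{\textcircled\#}\in X_{\f}(1,2)$ (Corollary \ref{C: PropAlge}). The underlying idea throughout is that $\f\circ\f^{\textcircled\#}=P_{\Img(\f)}$ (this is the first condition of Definition \ref{D:core-inver-fp93735}) and that $\f^{\textcircled\#}\circ\f=\f^{\#}\circ\f$ (the last item of Corollary \ref{C: PropAlge}) is precisely the projection onto $\Img(\f)$ along $\Ker(\f)$, since $i(\f)\le 1$.

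For the first biconditional, I would start from $\f\circ\f^{\textcircled\#}=\g\circ\f^{\textcircled\#}$ and post-compose both sides with $\f$ on the right. Using $\f\circ\f^{\textcircled\#}\circ\f=\f$ (which holds because $\f^{\textcircled\#}\in X_{\f}(1)$), the left-hand side collapses to $\f$, while the right-hand side becomes $\g\circ\f^{\textcircled\#}\circ\f=\g\circ\f$, since $\f^{\textcircled\#}\circ\f$ is the identity on $\Img(\f)$ and $\f(V)=\Img(\f)$; hence $\f^{2}=\f\circ\f=\g\circ\f$. For the converse, suppose $\f^{2}=\g\circ\f$ and post-compose both sides by $\f^{\textcircled\#}$ on the right: the left side gives $\f\circ(\f\circ\f^{\textcircled\#})=\f\circ P_{\Img(\f)}$, and the right side gives $\g\circ(\f\circ\f^{\textcircled\#})=\g\circ P_{\Img(\f)}$. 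Then I would check that both sides agree with $\f\circ\f^{\textcircled\#}$ and $\g\circ\f^{\textcircled\#}$ respectively, using that $\f\circ\f^{\textcircled\#}$ already equals $\f\circ P_{\Img(\f)}$ in the appropriate sense; the cleanest route is to verify equality on the two summands of $V=\Img(\f)\oplus[\Img(\f)]^{\perp}$, noting that both $\f\circ\f^{\textcircled\#}$ and $\g\circ\f^{\textcircled\#}$ vanish on $[\Img(\f)]^{\perp}=\Ker(\f^{\textcircled\#})$.

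For the second biconditional, the key observation is that $\f^{\textcircled\#}\circ\f$ and $\f^{\dagger}\circ\f$ have the same kernel and agree as maps where it matters. Indeed $\f^{\dagger}\circ\f=P_{[\Ker(\f)]^{\perp}}$ by the listed Moore--Penrose properties, while $\f^{\textcircled\#}\circ\f=\f^{\#}\circ\f=P_{\Img(\f)}$ (projection along $\Ker(\f)$). For the forward direction I would pre-compose $\f^{\textcircled\#}\circ\f=\f^{\textcircled\#}\circ\g$ with $\f^{\dagger}\circ\f\circ(\f^{\textcircled\#})^{-1}$-type expressions; more directly, since $\f^{\textcircled\#}$ and $\f^{\dagger}$ both restrict to $(\f_{\vert_{\Img(\f)}})^{-1}$ on $\Img(\f)$ and both kill the orthogonal complement (for $\f^{\dagger}$ this is $[\Img(\f)]^{\perp}$, for $\f^{\textcircled\#}$ likewise by Theorem \ref{T: GeoCaractCoreInv}), the two operators $\f^{\textcircled\#}$ and $\f^{\dagger}$ coincide on $\Img(\f)\oplus[\Img(\f)]^{\perp}=V$. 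Hence $\f^{\textcircled\#}=\f^{\dagger}$ as operators, and the biconditional reduces to replacing $\f^{\textcircled\#}$ by $\f^{\dagger}$ in the second core-order condition verbatim.

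The step I expect to be the main obstacle is justifying the last claim that $\f^{\textcircled\#}$ and $\f^{\dagger}$ act identically wherever the second equation sees them, because this is only literally true when $\Ker(\f)=[\Img(\f)]^{\perp}$, i.e.\ when $\f$ is EP (Lemma \ref{L: fEPsifgroupesfcirc}), and in general the two complements differ. So rather than claiming $\f^{\textcircled\#}=\f^{\dagger}$ outright, the correct and careful argument is to show that $\Ker(\f^{\textcircled\#}\circ\f)=\Ker(\f^{\dagger}\circ\f)=\Ker(\f)$ and that both maps agree on $\Img(\f)$, so that the condition $\f^{\textcircled\#}\circ\f=\f^{\textcircled\#}\circ\g$ is equivalent to the condition $\f^{\dagger}\circ\f=\f^{\dagger}\circ\g$ by testing against the decomposition $V=\Img(\f)\oplus\Ker(\f)$ and against $V=[\Ker(\f)]^{\perp}\oplus\Ker(\f)$; I would handle this by checking that both equalities are equivalent to the single geometric statement $\g(\Ker(\f))\subseteq\Ker(\f)$, which is exactly the information extracted from the second core-order condition in the proof of Theorem \ref{T: CaractCore}.
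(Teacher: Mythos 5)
There are genuine gaps in both bullets, even though your overall geometric strategy could be made to work.

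In the forward direction of the first bullet, the step ``the right-hand side becomes $\g\circ\f^{\textcircled\#}\circ\f=\g\circ\f$'' is false. The map $\f^{\textcircled\#}\circ\f$ is the projection onto $\Img(\f)$ along $\Ker(\f)$, and in the composition $\g\circ(\f^{\textcircled\#}\circ\f)$ its argument is an arbitrary vector of $V$, not a vector of $\Img(\f)$, so ``identity on $\Img(\f)$'' does not apply. Concretely, take $V=k$ and $\f=\g=2\,\Id$: then $\f^{\textcircled\#}=\tfrac{1}{2}\Id$, so $\g\circ\f^{\textcircled\#}\circ\f=2\,\Id=\f$ while $\g\circ\f=4\,\Id$. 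Your chain of equalities would yield $\f=\g\circ\f$, which is false here and is in any case not the assertion $\f^{2}=\g\circ\f$. The justification you give actually proves the identity $\f^{\textcircled\#}\circ\f\circ\f=\f$ (here the argument of $\f^{\textcircled\#}\circ\f$ \emph{is} in $\Img(\f)$), so the repair is to compose on the right with $\f^{2}$ rather than with $\f$: this gives $\f^{2}=\g\circ\f^{\textcircled\#}\circ\f\circ\f=\g\circ\f$, which is exactly the paper's move (done there algebraically via $\f^{\textcircled\#}=\f^{\#}\circ\f\circ\f^{\dagger}$, Theorem \ref{T: AlgDefCore-Inver}, and commutativity of $\f$ with $\f^{\#}$). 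Your converse direction of the first bullet is correct.

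In the second bullet, the claim that $\f^{\textcircled\#}$ and $\f^{\dagger}$ ``both restrict to $(\f_{\vert_{\Img(\f)}})^{-1}$ on $\Img(\f)$'' is false unless $\f$ is EP: by the characterization recalled in Section \ref{ss: Moore-Penrose}, $\f^{\dagger}_{\vert_{\Img(\f)}}=(\f_{\vert_{[\Ker(\f)]^{\perp}}})^{-1}$, which takes values in $[\Ker(\f)]^{\perp}$, whereas $\f^{\textcircled\#}_{\vert_{\Img(\f)}}=(\f_{\vert_{\Img(\f)}})^{-1}$ takes values in $\Img(\f)$. You correctly flag this as the obstacle, but your proposed repair is also wrong: neither equality is equivalent to $\g(\Ker(\f))\subseteq\Ker(\f)$. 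What the proof of Theorem \ref{T: CaractCore} extracts from the second core-order condition is $\g(\Ker(\f))\subseteq[\Img(\f)]^{\perp}$, and even that is only the restriction to $\Ker(\f)$; the condition also constrains $\g$ on $\Img(\f)$. The fact that actually makes a geometric proof work is the only one you need, and it concerns kernels alone: $\Ker(\f^{\textcircled\#})=\Ker(\f^{\dagger})=[\Img(\f)]^{\perp}$ (Theorem \ref{T: GeoCaractCoreInv} and the explicit form of $\f^{\dagger}$), together with the elementary observation that for any linear map $X$ one has $X\circ\f=X\circ\g$ if and only if $\Img(\f-\g)\subseteq\Ker(X)$. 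Hence both equalities in the second bullet are equivalent to the single condition $\Img(\f-\g)\subseteq[\Img(\f)]^{\perp}$, and in particular to each other; no agreement of $\f^{\textcircled\#}$ and $\f^{\dagger}$ on $\Img(\f)$ is needed (nor true). This corrected route is genuinely different from the paper's, which stays entirely algebraic: it rewrites both sides via $\f^{\textcircled\#}=\f^{\#}\circ\f\circ\f^{\dagger}$ and pre-composes with $\f^{\dagger}\circ\f$, respectively $\f^{\#}\circ\f$, using only the defining identities of the group and Moore--Penrose inverses.
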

\begin{proof}
Let us begin with the first equivalence. Let us suppose that $\f\circ\f^{\textcircled\#}=\g\circ\f^{\textcircled\#}.$ Then, $\f\circ\f^{\dagger}=\g\circ\f^{\#}\circ\f\circ\f^{\dagger}$ by Theorem \ref{T: AlgDefCore-Inver}. Post-composing with $\f^2$ yields: $\f^2=\g\circ\f^{\#}\circ\f^2, $ and by the commuting property of the group inverse it is: $$\f^2=\g\circ\f\circ\f^{\#}\circ\f=\g\circ\f.$$ Conversely, let us suppose that $\f^2=\g\circ\f.$ Post composing with $\f^{\#}\circ\f^{\dagger}$ we get: $\f^2\circ\f^{\#}\circ\f^{\dagger}=\g\circ\f\circ\f^{\#}\circ\f^{\dagger}.$ Again, by the commuting property of the group inverse: $$\f\circ\f^{\#}\circ\f\circ\f^{\dagger}=\g\circ\f^{\#}\circ\f\circ\f^{\dagger}, $$ and we conclude by Theorem \ref{T: AlgDefCore-Inver}.\medskip \\Let us prove the second statement. Let us suppose that $\f^{\textcircled\#}\circ\f=\f^{\textcircled\#}\circ\g,$ which can be written as: $\f^{\#}\circ\f=\f^{\#}\circ \f\circ\f^{\dagger}\circ\g$ in virtue of Theorem \ref{T: AlgDefCore-Inver}. Pre-composing with $\f^{\dagger}\circ\f$ yields: $\f^{\dagger}\circ\f=\f^{\dagger}\circ\f\circ\f^{\#}\circ\f\circ\f^{\dagger}\circ\g,$ using the definition of the group inverse, one gets: $$\f^{\dagger}\circ\f=\f^{\dagger}\circ\f\circ\f^{\dagger}\circ\g=\f^{\dagger}\circ\g.$$ Conversely, if $\f^{\dagger}\circ\f=\f^{\dagger}\circ\g,$ pre-composing with $\f^{\#}\circ\f$ we arrive at $\f^{\textcircled\#}\circ\f=\f^{\textcircled\#}\circ\g$ by Theorem \ref{T: AlgDefCore-Inver}.  
\end{proof}

\section{A pre-order induced by the Core Inverse for finite potent endomorphisms.}\label{S: PreOrder}
Finally, in this short section, let us include an order which makes sense in the case of finite potent endomorphisms as it uses strongly the notion of index and the CN-decomposition of a finite potent endomorphism (Section \ref{ss: CNdecomp01010101}). Moreover, we settle some problems for future research.

\begin{defn}\label{D: GeneralCoreOrder}
Given $\f,\g \in \ed_k^{fp}(V)$ with $\f=\f_1+\f_2$ and $\g=\g_1+\g_2$ their respective CN-decompositions. We will say that $\f$ is below $\g$ for the ``general core order'', and it will be denoted as $\f <^{\textcircled\#}\g,$ when $$\f_1\leq^{\textcircled\#}\g_1 $$ for the core partial order (Definition \ref{D: CorePartialOrder}).
\end{defn}

Let us point out that this definition makes sense as by definition of the CN-decomposition the core part of any endomorphism is of index lesser or equal than one.

\begin{rem}\label{R: MorfismoDelosFP}
Let us consider any finite potent endomorphism $\f\in \ed_k(V)$ with $\f=\f_1+\f_2$ being its CN-decomposition. Let us denote, again, by $\ed_k^{fp}(V)^{\leq 1}$ the set of finite potent endomorphisms of index lesser or equal than one. There always exists a surjective morphism: $$\begin{array}{rccl}
\Gamma \colon & \ed_k^{fp}(V)  & \rightarrow & \ed_k^{fp}(V)^{\leq 1} \\
& \f & \mapsto & \f_1
\end{array} .$$ It is clear that this morphism is an isomorphism when restricting to the set of finite potent endomorphisms of index lesser or equal than one, this is, $\Gamma_{\vert End_k^{fp}(V)^{\leq 1}}.$ \\Notice that we can formulate and answer questions in the theory of matrix partial orders using this morphism. This morphism being an isomorphism when $i(\f)\leq 1$ means that the general core order and the core partial order coincide in the set of finite potent endomorphisms of index lesser or equal than one.\\
On the other hand, the lack of injectivity of this morphism, when applied to the general core order (Definition \ref{D: GeneralCoreOrder}), is the same as saying that this relation is not anti-symmetric. Thus, bearing in mind the well known relationship between finite matrices and endomorphisms over finite dimensional vector spaces, any counterexample for the injectivity of this morphism is a counterexample to the anti-symmetric property of the general core order. For instance, let us consider the following matrices:
$$A=\begin{pmatrix} 29 & 0 & 0 & 0 & 0 \\ 0 & 33 & 0 & 0 & 0 \\ 0 & 0 & 0 & 0 & 0 \\ 0 & 0 & 0 & 0 & 0 \\ 0 & 0 & 0 & 1 & 0 \end{pmatrix} \text{ and } B=\begin{pmatrix} 29 & 0 & 0 & 0 & 0 \\ 0 & 33 & 0 & 0 & 0 \\ 0 & 0 & 0 & 0 & 0 \\ 0 & 0 & 1 & 0 & 0 \\ 0 & 0 & 0 & 1 & 0 \end{pmatrix}\in \text{Mat}_{5\times 5} ({\mathbb R})\, .$$ 
Notice that $i(A)=2$ and $i(B)=3.$ It is evident that $A_1=B_1= \begin{pmatrix}
29 & 0\\
0  & 33 \end{pmatrix}, $ $A_2=\begin{pmatrix}
0 & 0 & 0\\
0 & 0 & 0\\
0 & 1 & 0 
\end{pmatrix}$ and $B_2=\begin{pmatrix}
0 & 0 & 0\\
1 & 0 & 0\\
0 & 1 & 0
\end{pmatrix},$ hence $A<^{\textcircled \#}B$ and $B<^{\textcircled \#}A,$ but however $A\neq B.$

\end{rem}

\begin{thm}\label{T: GeneralCoreOrderIsPreorder}
The general core order is a pre-order in the set of $\ed_k^{fp}(V).$
\end{thm}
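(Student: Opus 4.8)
The plan is to show that the general core order is reflexive and transitive, which are exactly the two axioms of a pre-order (anti-symmetry already fails, as the counterexample in Remark \ref{R: MorfismoDelosFP} demonstrates). The key observation is that the relation $\f <^{\textcircled\#}\g$ is, by Definition \ref{D: GeneralCoreOrder}, defined entirely in terms of the core parts through $\f_1 \leq^{\textcircled\#}\g_1$, and that the core order $\leq^{\textcircled\#}$ has already been established as a genuine partial order on $\ed_k^{fp}(V)^{\leq 1}$ in Theorem \ref{T: CoreOrdenParcial}. So the whole proof is a transport of structure along the map $\Gamma$ of Remark \ref{R: MorfismoDelosFP}.

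First I would handle reflexivity. Given $\f \in \ed_k^{fp}(V)$ with CN-decomposition $\f = \f_1 + \f_2$, its core part is $\f_1 \in \ed_k^{fp}(V)^{\leq 1}$, and since $\leq^{\textcircled\#}$ is reflexive on $\ed_k^{fp}(V)^{\leq 1}$ (this reflexivity is recorded at the start of the proof of Theorem \ref{T: CoreOrdenParcial}), we have $\f_1 \leq^{\textcircled\#}\f_1$, hence $\f <^{\textcircled\#}\f$ by definition. This step is immediate.

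Next I would handle transitivity, which is the only step requiring any care. Suppose $\f <^{\textcircled\#}\g$ and $\g <^{\textcircled\#}\phi$, with CN-decompositions $\f = \f_1 + \f_2$, $\g = \g_1 + \g_2$ and $\phi = \phi_1 + \phi_2$. By Definition \ref{D: GeneralCoreOrder} these mean precisely $\f_1 \leq^{\textcircled\#}\g_1$ and $\g_1 \leq^{\textcircled\#}\phi_1$, where all three core parts $\f_1, \g_1, \phi_1$ lie in $\ed_k^{fp}(V)^{\leq 1}$. Since $\leq^{\textcircled\#}$ is transitive on $\ed_k^{fp}(V)^{\leq 1}$ by Theorem \ref{T: CoreOrdenParcial}, we conclude $\f_1 \leq^{\textcircled\#}\phi_1$, which is exactly $\f <^{\textcircled\#}\phi$. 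The subtle point worth stating explicitly is that the general core order compares two arbitrary-index endomorphisms solely via their core parts, so transitivity of $<^{\textcircled\#}$ reduces verbatim to the already-proven transitivity of $\leq^{\textcircled\#}$ on index-one endomorphisms; no reassembly of the nilpotent parts $\f_2, \g_2, \phi_2$ is needed.

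The main obstacle, if any, is purely conceptual rather than computational: one must be careful that the relation is well-defined, i.e. that each finite potent endomorphism has a \emph{unique} CN-decomposition so that the core part $\f_1$ is unambiguously assigned. This uniqueness is guaranteed by the discussion in Section \ref{ss: CNdecomp01010101}, and it is precisely what makes the map $\Gamma$ of Remark \ref{R: MorfismoDelosFP} a well-defined morphism. Once this is granted, the proof is a direct appeal to Theorem \ref{T: CoreOrdenParcial}; I expect no genuine difficulty, and the failure of anti-symmetry (already exhibited) is exactly what prevents $<^{\textcircled\#}$ from being upgraded to a partial order.
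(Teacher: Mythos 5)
Your proposal is correct and follows exactly the paper's approach: the paper's own proof is the one-line observation that reflexivity holds directly and transitivity follows from Theorem \ref{T: CoreOrdenParcial}, which is precisely your reduction to the core parts via Definition \ref{D: GeneralCoreOrder}. Your write-up merely makes explicit the well-definedness (uniqueness of the CN-decomposition) and the transport-of-structure reading that the paper leaves implicit.
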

\begin{proof}
Reflexivity holds directly and transitivity is guaranteed by Theorem \ref{T: CoreOrdenParcial}.
\end{proof}

\begin{rem}
As it has been shown in Section \ref{s:core-inv-fp}, the Core Inverse makes sense in the framework of finite potent endomorphisms of index lesser or equal than one. This limitation, when working in the framework of finite matrices, led Manjunatha Prasad and Mohana to present the ``Core-EP'' inverse in 2014 as an extension of the Core Inverse for the case of matrices of arbitrary index. This work was done in \cite{Manj}. Moreover, in 2016, H.Wang introduced a new decomposition for square matrices called the Core-EP decomposition and showed some of its applications in \cite{HWang}. Among them, he defined some new orders such as the core-EP order and the core-minus order. The author of the present work thinks that it shall be of mathematical interest to generalize the theory of the Core-EP inverse (with the Core-EP decomposition) to finite potent endomorphisms. Once this is done, it shall be applied to studying the Core-EP order in the set of finite potent endomorphisms of arbitrary index. Finally, the  author thinks that the relation between the ``general core order'' and the Core-EP order shall be clarified in the framework of finite potent endomorphisms and thus, by specialization, in the case of finite square matrices. 

\end{rem}

\section*{Declarations}

\begin{itemize}
\item Funding: no funding was received to assist with the preparation of this manuscript. 
\item The author has no relevant financial or non-financial interests to disclose.
\item Conflict of interest/Competing interests: none.
\item Ethics approval: not-applicable.
\item Consent to participate: not-applicable.
\item Consent for publication: the author authorises the publication of the previous notes.
\item Availability of data and materials: Not-applicable.
\item Code availability: not-applicable.
\item Authors' contributions: Not-applicable.
\end{itemize}

\medskip

\section*{Acknowledgements}
The author would like to thank Dr. Ángel A. Tocino García, from the Department of Mathematics of the University of Salamanca, for his useful help in working out the Counterexample \ref{cex: ImgBFP} offered in Section \ref{ss: ImgBFP}.

\end{document}